\definecolor{mycolor1}{rgb}{0.00000,0.44700,0.74100}
\definecolor{mycolor2}{rgb}{0.8500, 0.3250, 0.0980}
\definecolor{mycolor3}{rgb}{0.9290, 0.6940, 0.1250}
\definecolor{mycolor4}{rgb}{0.4940, 0.1840, 0.5560}
\definecolor{mycolor5}{rgb}{0.4660, 0.6740, 0.1880}
\crefname{hypothesis}{Hypothesis}{Hypotheses}
\newtheorem{observation}[theorem]{Observation}
\newtheorem{algo}{Algorithm}
\numberwithin{theorem}{section}
\title{Bayesian Integrals on Toric Varieties\thanks{
Received April 14, 2022; accepted for publication (in revised form) January 10, 2023;
\href{https://doi.org/10.1137/22M1490569}{https://doi.org/10.1137/22M1490569}
\funding{M.\ B.\ was supported by Dr.\ Max R\"{o}ssler, the Walter Haefner Foundation, and the ETH Z\"{u}rich Foundation.}}}
\author{Michael Borinsky\thanks{Institute for Theoretical Studies, ETH Z\"urich, Z\"urich
  (\email{michael.borinsky@eth-its.ethz.ch})}
\and Anna-Laura Sattelberger\thanks{MPI MiS, Leipzig and Dept.~of Mathematics, Royal Institute of Technology, Stockholm {\em (current)}
  (\email{alsat@kth.se})}
\and Bernd Sturmfels\thanks{MPI MiS, Leipzig and UC Berkeley
	  (\email{bernd@mis.mpg.de})}
\and Simon Telen\thanks{MPI MiS, Leipzig and CWI, Amsterdam {\em (current)}
(\email{simon.telen@cwi.nl})}
}
\newcommand{\PP}{\mathbb{P}}
\newcommand{\RR}{\mathbb{R}}
\newcommand{\CC}{\mathbb{C} }
\newcommand{\ZZ}{\mathbb{Z}}
\newcommand{\NN}{\mathbb{N}}
\newcommand{\Hom}{\rm Hom}
\begin{document}

\maketitle

\begin{abstract}
We explore the positive geometry of statistical models in the setting
of toric varieties. Our focus lies on models for discrete data that are
parameterized in terms of Cox coordinates. We develop a geometric theory
for computations in Bayesian statistics, such as evaluating marginal
likelihood integrals and  sampling from posterior distributions.
These are based on a tropical sampling method for evaluating
Feynman integrals in physics. We  here extend that method from 
projective spaces to arbitrary toric varieties.
\end{abstract}

\begin{keywords}
Toric varieties, positive geometries, marginal likelihood, Bayesian statistics, rejection sampling.
\end{keywords}

\begin{MSCcodes}
14M25, 62F15 (primary), 65D30 (secondary)
\end{MSCcodes}

\section{Introduction}\label{sec1}
Every  projective toric variety $X$ is a positive geometry \cite{arkani2017positive}. Its
canonical differential form $\Omega_X$ has poles on the toric boundary, and it
encodes probability measures on the positive part $X_{> 0}.$ Our aim is to
develop the geometry of Bayesian statistics in this toric setting.
We introduce parametric statistical models by mapping
$X_{> 0}$ into a probability simplex $\Delta_m.$
The probabilities are written in Cox coordinates on $X.$ 
We shall use the canonical measure on $X_{> 0}$ for marginal likelihood 
integrals and for sampling from the posterior distribution.

We begin with an example for the
product of three projective lines
$X = \PP^1 \times \PP^1 \times \PP^1.$ This toric threefold 
has six Cox coordinates
$ x_0,x_1,\,s_0,s_1,\, t_0,t_1.$ Each letter refers
to homogeneous coordinates on one of the three lines $\PP^1.$
We consider the model $X_{> 0} \rightarrow \Delta_m$~parameterized~by
\begin{equation}
	\label{eq:runningex1} p_{\ell} \,\, = \,\,
	\binom{m}{\ell} \frac{x_0}{x_0+x_1} \frac{s_0^{\ell} s_1^{m-\ell }}{(s_0+s_1)^m} \,+\,
	\binom{m}{\ell} \frac{x_1}{x_0+x_1} \frac{t_0^{\ell} t_1^{m-{\ell}}}{(t_0+t_1)^m} \qquad  \text{for} \quad \ell=0,1,\ldots,m.
\end{equation}
These expressions are rational functions on $X,$ positive on $X_{> 0},$
and their sum   equals~$1.$ This is the 
conditional independence model for $m$ binary random variables with $1$ binary hidden~state.
Algebraically, it represents  symmetric $2 {\times} 2 {\times} \cdots {\times} 2$ tensors of nonnegative rank~$2.$

For an intuitive understanding, 
imagine  a gambler who has three biased coins, one in each hand, and one more to decide which hand to use.
The latter coin has probabilities $x_0$ and $x_1$ for tails and heads, and this
decides whether the left hand coin (with bias $s_0,s_1$) or the right hand coin 
(with bias $t_0,t_1$) is to be used. The gambler performs $m$  coin tosses with~the chosen hand
and records the number of heads. The probability of observing $\ell$ heads equals~$p_\ell.$
A more familiar formula for this event arises by dehomogenizing via
$x_1 = x, \,x_0 = 1-x,$ etc: 
\begin{equation}
	\label{eq:runningex2}
	p_\ell \,\,=\,\, \binom{m}{\ell} \left [ (1-x)(1-s)^\ell s^{m-\ell} \,+\, x (1-t)^\ell t^{m-\ell}\, \right] 
	\qquad {\rm for} \quad \ell=0,1,\ldots,m. 
\end{equation}
As is customary in toric geometry, we identify the
positive variety $X_{> 0}$ with the open cube $(0,1)^3,$ 
which is the space of dehomogenized parameters $(x,s,t).$
At first glance, the passage from \eqref{eq:runningex2} to
\eqref{eq:runningex1} does not  change much.
It is a reparameterization of the model in $\Delta_m = \PP^m_{> 0},$ which comprises
positive Hankel matrices  that are semidefinite and have rank $\leq 2.$
For instance, for $m=4$ coin tosses, these are the $3 \times 3$ Hankel matrices shown in
\cite[Section 1]{HKS}:
$$ 
\begin{pmatrix}
	12 p_0 & 3 p_1 & 2 p_2 \\
	3 p_1 & 2 p_2 & 3 p_3 \\
	2 p_2 & 3 p_3 & 12 p_4 
\end{pmatrix} \,\,=\,\,
\frac{12}{x_0 + x_1} \begin{pmatrix} s_1^2 & t_1^2 \\ s_0 s_1 & t_0 t_1 \\ s_0^2 & t_0^2 \end{pmatrix} \!\!
\begin{pmatrix} \frac{x_0}{(s_0+s_1)^4} \!\! & 0 \! \!\smallskip \\ \! \! 0 & \!\! \frac{x_1}{(t_0+t_1)^4} \end{pmatrix} \!\!
\begin{pmatrix} s_1^2 & s_0 s_1 & s_0^2 \\ t_1^2 & t_0 t_1 & t_0^2 \end{pmatrix}.
$$

The key insight for what follows is that our toric $3$-fold
$X$ has a canonical $3$-form
\begin{equation}
	\label{eq:keyinsight}
	\Omega_X \, = \, \sum_{i=0}^1 \sum_{j=0}^1 \sum_{k=0}^1  \, (-1)^{i+j+k} \,
	\frac{{\rm d} x_i}{x_i} \wedge \frac{{\rm d} s_j}{s_j} \wedge \frac{{\rm d} t_k}{ t_k} .
\end{equation}
This gives $(X,X_{>0})$ the structure of a
positive geometry in the sense of  \cite{arkani2017positive}.
The associated representations of prior distributions on the parameter space $X_{> 0}$ offer
novel tools for Bayesian inference. For instance, suppose
our prior belief about the parameters $(x,s,t)$ in the coin model 
\eqref{eq:runningex2} is the uniform distribution on the cube~$[0,1]^3.$
Its pullback to $X$ equals
\begin{equation}
	\label{eq:unif}
	\Omega^{\rm unif}_X \,\, = \,\,
	\frac{x_0x_1s_0s_1t_0t_1}{(x_0+x_1)^2(s_0+s_1)^2(t_0+t_1)^2} \, \Omega_X . 
\end{equation}

Statistics is about data. If our gambler performs the experiment $U$
times, and $\ell$ heads were observed $u_\ell$ times, 
then $\frac{1}{U}(u_0,u_1,\ldots,u_m) \in \Delta_m$ is the empirical
distribution. The likelihood function is a rational function on
the toric variety $X,$ namely $L_u= p_0^{u_0} p_1^{u_1} \cdots p_m^{u_m}.$
The posterior distribution  is the product of
this  function times the prior distribution on
$X_{> 0}.$ 
For the prior that is uniform on $[0,1]^3 $ 
we take \eqref{eq:unif}.
The marginal likelihood integral equals
\begin{equation}
	\label{eq:mli}
	\int_{X_{> 0}} p_0^{u_0} p_1^{u_1} \cdots p_m^{u_m} \,\Omega^{\rm unif}_X .
\end{equation}
Two important tasks in Bayesian statistics \cite{Gelman}
are evaluating the
integral \eqref{eq:mli} and sampling from the posterior distribution.
See \cite{LSZ} and \cite[Section 5.5]{Sullivant} 
for points of entry from an algebraic perspective.
In this paper, we explore these tasks using
toric and tropical geometry.

We shall study our  statistical problem in the following  algebraic framework.
Let $f$ and $g$ be homogeneous polynomials of the same degree
in the Cox coordinates on an $n$-dimensional toric variety $X.$
We assume that all coefficients in $f$ and $g$ are positive real numbers, so
the rational function $f/g$ has no zeros or poles on $X_{> 0}.$
The integral of the $n$-form $\, (f/g) \Omega_X \,$
over the positive toric variety $X_{> 0}$ is a positive real number, or it diverges.
Our aim is to compute this number numerically.
We focus on integrals of interest in Bayesian statistics.

The main contribution of this article is 
a geometric theory of Monte Carlo sampling,
based on the positive geometry $(X,X_{>0}).$
A central role is played by the
canonical form $\Omega_X.$
The approach was first introduced in
\cite{borinsky2020tropical} for
Feynman integrals on projective space $X= \PP^n.$

Our presentation is organized as follows.
\Cref{sec2} reviews
the quotient construction of a toric variety $X$
from its Cox ring. The canonical form $\Omega_X$ is defined in~\eqref{eq:canonicalform}.
We introduce integrals of the form $\int_{X_{>0}} (f/g) \Omega_X,$ and
we present a convergence criterion in \Cref{thm:convergent}.

In \Cref{sec3}, we replace the rational function $f/g$ in the integrand 
by its tropicalization. The resulting piecewise monomial structure
divides the positive toric variety $X_{>0}$ into sectors.
\Cref{thm:sectorformula}
gives a formula for integrating over each sector,
against the tropical probability distribution on $X_{>0}.$
\Cref{alg:eins} offers a method for sampling from that distribution. 
Although we here focus on its use in Bayesian statistics, we stress that the method of replacing densities by their tropical approximation is not 
custom-tailored for Bayesian purposes; the idea is applicable and might be beneficial more widely in statistics.

In \Cref{sec4}, we develop a tropical approximation scheme for 
the classical integral\linebreak $\,\int_{X_{>0}} (f/g) \Omega_X.$
We apply rejection sampling to draw from the
density induced by $f/g$ on~$X_{>0}$ with its canonical form $\Omega_X.$
The runtime is analyzed in terms of 
the acceptance rate. 

\Cref{sec5} is devoted to discrete statistical models
whose parameter space is a simple polytope~$P.$
Familiar instances are
linear models, toric models, and their mixtures \cite{Sullivant}.
We show how to pull back Bayesian priors from $P$ to $X_{>0}$ via the moment map.
The push-forward of $\Omega_X$ to $P$ gives rise to 
the Wachspress model whose states are the vertices of~$P.$
The section concludes with a combinatorial analysis of the coin model in
Equation \eqref{eq:runningex1}.

In \Cref{sec6}, we apply tropical integration and tropical sampling
to data analysis in the Bayesian setting. We focus on the
statistical models from \Cref{sec5}, but now
lifted from $P$ to $X_{>0}.$ We present algorithms, along with their
implementation, for computing marginal likelihood integrals. 
Sampling from the posterior distribution is also~discussed.
Our software and other supplementary material for this article is 
available at the repository website {\tt MathRepo}~\cite{mathrepo} of MPI MiS via the link
\href{https://mathrepo.mis.mpg.de/BayesianIntegrals/}{https://mathrepo.mis.mpg.de/BayesianIntegrals}$\,.$

\section{Toric Varieties and their Canonical Forms} \label{sec2}
We review the set-up of toric geometry,
leading up to the integrals  studied in this paper.
For complete details on toric varieties we refer to the textbook
\cite{cox2011toric} and to the  notes~\cite{telentoric}.
Let $T$ be an $n$-dimensional complex algebraic torus with character lattice $M$ and co-character lattice $N = \Hom_{\ZZ}(M,\ZZ).$ Fixing an isomorphism $T \simeq (\CC^*)^n$ corresponds to identifying $M \simeq \ZZ^n$ and $N \simeq \ZZ^n.$ 
We write $\chi^a$ for the character in $M$ corresponding to the lattice point $a \in \ZZ^n$ and $\lambda^v$ for the co-character in $N$ corresponding to $v \in \ZZ^n.$ The pairing $\langle \cdot, \cdot \rangle : N \times M \rightarrow \ZZ$ is given by $\langle \lambda^v, \chi^a \rangle = \chi^a \circ \lambda^v \in \Hom_{\ZZ}(\CC^*,\CC^*) \simeq \ZZ.$ 
In coordinates, this is the
dot product $\langle v, a \rangle = v \cdot a.$

Fix a complete fan $\Sigma$ in $\RR^n = N \otimes_{\ZZ} \RR.$ The $n$-dimensional toric variety
$X = X_\Sigma \supset T$ is normal and complete.
Write $\Sigma(d)$ for the set of cones of dimension $d$ in $\Sigma,$ and
$k = |\Sigma(1)|$ for the number of rays of $\Sigma.$
Each ray $\rho \in \Sigma(1)$ has a primitive ray generator $v_\rho \in \ZZ^n,$
satisfying $\rho \cap \ZZ^n = \NN \cdot \, v_\rho.$ 
We collect the rays in the columns of the $n \times k$ matrix $V = [v_1 \,v_2 ~ \cdots ~ v_k].$ This matrix has more columns than rows, i.e., $k > n$, since $\Sigma$ is complete. 

The free group of torus-invariant Weil divisors on $X$ is $\text{Div}_T(X) = \bigoplus_\rho \ZZ \cdot D_\rho \simeq \ZZ^k.$ A character $\chi^a \in M$ extends to a rational function on $X$ with divisor $\text{div}(\chi^a) = \sum_{\rho} \langle v_i, a \rangle D_\rho.$ The transpose matrix $V^\top,$ viewed as a map of lattices $M \rightarrow \text{Div}_T(X) \simeq \ZZ^k,$ sends a character to its divisor. Two torus-invariant divisors $D_1, D_2 \in \text{Div}_T(X)$ are linearly equivalent if and only if $D_1 - D_2 = \text{div}(\chi^a)$ for some character~$\chi^a.$ Equivalently, there is an exact sequence 
\begin{equation} \label{eq:SESCl}
	0 \, \longrightarrow \, M \overset{V^\top}{\longrightarrow} \ZZ^k \,\longrightarrow\, \text{Cl}(X) \,\longrightarrow \,0.
\end{equation}
The cokernel $\text{Cl}(X) = \ZZ^k/\mathrm{im} V^\top$ is the \emph{divisor class group} of $X.$ The \emph{Picard group} $\text{Pic}(X) \subseteq \text{Cl}(X)$ is the subgroup of Cartier divisors modulo linear equivalence.
Applying the functor $\Hom_{\ZZ}(-, \CC^*)$ to  \eqref{eq:SESCl}, we obtain
the following exact sequence of multiplicative abelian groups:
\begin{equation} \label{eq:dualSESCl}
	1\, \longrightarrow  \,G \, \longrightarrow \, (\CC^*)^k \, \longrightarrow\, (\CC^*)^n \, \longrightarrow \, 1. 
\end{equation}
The group $G = \Hom_{\ZZ}(\text{Cl}(X), \CC^*)$ is reductive: it is a quasi-torus of dimension $k-n.$

We will now recall the definition of the Cox ring of $X$. Consider the polynomial ring $S = \CC[x_1,\ldots,x_k],$
with one variable $x_\rho$ for each divisor $D_\rho,$ $\rho \in \Sigma(1).$
The sequence \eqref{eq:SESCl} defines a grading of $S$ by the group ${\rm Cl}(X),$
and the sequence \eqref{eq:dualSESCl} gives the associated quasi-torus action by $G$ on the
affine space ${\rm Spec}(S) = \CC^k.$ The grading is as follows: 
\[  S\,\,= \bigoplus_{\gamma \in \text{Cl}(X)}\! S_\gamma, \qquad \text{where} \qquad S_{\gamma} \,\,= \!\!
\bigoplus_{a\,:\,V^\top a + c \geq 0} \!\! \CC \cdot \, x^{V^\top a \,+\, c}. \]
The vector $c \in \ZZ^k$ is fixed.
It represents any divisor $D_c = \sum_{\rho} c_\rho D_\rho$ such that $[D_c] = \gamma.$
The sum on the right is over all $a \in M$ such that the integer vector $V^\top a + c $ is nonnegative.

The toric variety $X$ can be realized as a quotient $(\CC^k \backslash \mathcal{V}(B)) /\!/ G.$
The {\em irrelevant~ideal} $B \triangleleft S $ is generated by the squarefree monomials
$x^{\hat{\sigma}} = \prod_{\rho \notin \sigma} x_\rho$ representing maximal cones,~i.e.,
\[ B \,\,=\,\, \langle \,x^{\hat{\sigma}} ~|~ \sigma \in \Sigma(n) \,\rangle. \]
The map $(\CC^*)^k \rightarrow (\CC^*)^n$ in \eqref{eq:dualSESCl} is constant on $G$-orbits. 
It is the restriction of the map $\pi : \CC^k \backslash \mathcal{V}(B) \rightarrow X$ which presents $X$ as the quotient above.
The notation $/\!/$ indicates that this is generally not a \emph{geometric} quotient. However, it is if $\Sigma$ is a simplicial fan.
Under this extra assumption, we write
$X = (\CC^k \backslash \mathcal{V}(B)) / G$ and there is a one-to-one correspondence 
\[ \{ \text{$G$-orbits in $\CC^k \backslash \mathcal{V}(B)$} \} \,\,\stackrel{1:1}{\longleftrightarrow} \,\,\{ \text{points in $X$} \}. \]
The  polynomial ring $S$, with its ${\rm Cl}(X)$-grading and irrelevant ideal $B$, is the \emph{Cox ring} of $X.$ 

The zero locus in $\CC^k$ of a homogeneous polynomial $f \in S$ is stable under the $G$-action.
Hence, the zero locus of
$f$ in $X $ is well-defined. In fact, homogeneous ideals of $S$ define subschemes of~$X,$
and all subschemes arise in this way. If $X$ is smooth, then
the  subschemes of~$X$ 
are in one-to-one correspondence
with the $B$-saturated homogeneous ideals of $S.$ 

If $f, g \in S$ are homogeneous of the same degree and $g \neq 0$, the quotient $f/g$ gives a rational function on $X$, defined on the open subset $\pi \left ((\CC^*)^k \setminus({\cal V}(B) \cup {\cal V}(g)) \right)$ via $(f/g)(p) = f(x)/g(x)$ where $x$ is any point in the $G$-orbit $\pi^{-1}(p)$. In the rest of the paper, we will use homogeneous rational functions of degree $0$ in the fraction field of $S$ to denote the corresponding rational function on $X$. Similarly, meromorphic differential forms on $X$ can be represented by rational functions in Cox coordinates, as in \eqref{eq:keyinsight} for $X = \PP^1 \times \PP^1 \times \PP^1$. See also \Cref{rmk:canonical}.

The material above may look overly formal to a novice. Yet,
toric varieties $X$ and their Cox coordinates $x_1,\ldots,x_k$ are
practical tools for applications, e.g.,~in the numerical solution of
polynomial equations \cite{telennumerical}. The present paper extends the
utility of the abstract setting to numerical computing at the interface of
statistics and physics \cite{borinsky2020tropical, sturmfels2020likelihood}.
In applications, the toric variety $X$ is usually projective,
i.e.,~$\Sigma$ is the normal fan of a lattice polytope in $\RR^n.$

\begin{example}[3-cube]
	Let $n=3,$ $k=6$ and $\Sigma$ the fan given by the eight orthants in $\RR^3.$
	Then $X = \PP^1 \times \PP^1 \times \PP^1,$ 
	with Cox ring $S = \CC[x_0,x_1,s_0,s_1,t_0,t_1],$ 
	graded by ${\rm Cl}(X) = \ZZ^3.$ 
	The irrelevant ideal is
	$\,B = \langle x_0,x_1 \rangle \,\cap\, \langle s_0,s_1 \rangle \,\cap \, \langle t_0,t_1 \rangle \,=\, \langle x_0 s_0 t_0, x_0 s_0 t_1, \ldots, x_1 s_1 t_1 \rangle.$
	We represent $X$ as the quotient of $\CC^6 \backslash \mathcal{V}(B)$ modulo the action
	$G = (\CC^*)^3,$ as in the Introduction.
	See \cite[Example 6.2.7 (2)]{maclagan2009introduction}
	for a detailed study of this example and its tropicalization.
\end{example}

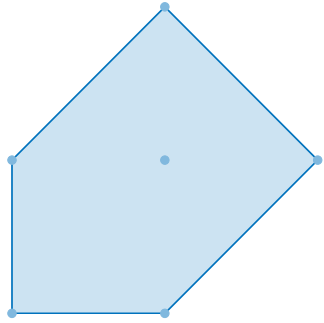
\begin{figure}[ht]
	\centering 
	\begin{tikzpicture}[scale=0.8] \begin{axis}[ width=2in, height=2in, scale only axis, xmin=-0.0, xmax=2.0, ymin=-0.0, ymax=2.0, ticks = none, ticks = none, axis background/.style={fill=white}, axis line style={draw=none} ] \addplot [color=mycolor1,solid,thick, fill = mycolor1!20!white,forget plot] table[row sep=crcr]{ 0 0\\
0 1\\
1 2\\	 2 1\\
1 0\\
0 0\\
}; \addplot[only marks,mark=*,mark size=2.0pt,mycolor1!50!white ] table[row sep=crcr]{ 0 0\\
1 0\\
0 1\\
1 1\\
2 1\\
1 2\\
}; \end{axis} \end{tikzpicture}
	\caption{This pentagon specifies the
		projective toric surface in \Cref{ex:pentagon1}.}
	\label{fig:pentagon}
\end{figure}

\begin{example}[Pentagon]  \label{ex:pentagon1}
	Fix $n=2,$ $k=5,$ and the polygon in Figure~\ref{fig:pentagon}.
	The rays of its normal fan $\Sigma$ are the inner normals to the edges. We write their generators in the matrix
	\begin{equation} \label{eq:Vmatrix}
		V \,\,=\,\, \begin{pmatrix}
			\,\,1 & \phantom{-}1 & -1 & -1 & \, 0 \,\,\\
			\,\,0 & -1 & -1 &  \phantom{-}1 & \, 1 \,\,
		\end{pmatrix} . 
	\end{equation}
	We have
	$G \simeq (\CC^*)^3,$ since
	${\rm Cl}(X) = \ZZ^5/{\rm im}\,V^\top $ is isomorphic to $\ZZ^3.$
	The irrelevant ideal is
	$$ B = \langle x_1 x_2 x_3, x_2x_3x_4, x_3x_4x_5, x_4x_5x_1, x_5x_1x_2 \rangle
	\,\,\triangleleft \,\, S = \CC[x_1,x_2,x_3,x_4,x_5]. $$
	The map 
	$(x_1, \ldots, x_5) \! \mapsto \! \!\!\; (x_1 x_2 x_3^{-1} x_4^{-1}\! , x_2^{-1} x_3^{-1}x_4x_5)$
	represents  $X$ as the quotient $(\CC^5 \setminus \mathcal{V}(B))/G.$
	We find it convenient to write the image of $V^\top$ in $\ZZ^5$ as the kernel of another matrix,~e.g.,
	\begin{equation}
		\label{eq:Wmatrix}
		W \,\,=\,\, \begin{pmatrix}
			0 & 1 & 0 & 1 & 0 \\
			1 & 0 & 1 & 0 & 1 \\
			2 & 0 & 1 & 1 & 0 \\
		\end{pmatrix}.
	\end{equation}
	The $\ZZ^3$-grading of $S$ sends $x_i$ to the 
	$i$-th column of $W.$ The $G$-action on $\CC^5$ is 
	$$ x_1 \,\mapsto \,t_2 t_3^2 \, x_1,\,\,
	x_2 \,\mapsto \,t_1 x_2,\,\,     x_3 \,\mapsto \,t_2 t_3  x_3,\,\,
	x_4 \,\mapsto \,t_1 t_3 x_4,\,\,     x_5 \,\mapsto \, t_2  x_5. $$
	If $c \in \ZZ^5$ then $\gamma = W c \in \ZZ^3$ represents the class $[D_c] \in {\rm Cl}(X)$
	of the divisor $D_c$ on~$X$.
\end{example}

The positive orthant $\RR^{k}_{>0} $ is disjoint from $\mathcal{V}(B)$ since
$B$ is a monomial ideal. We can restrict the quotient map
$\CC^k \backslash \mathcal{V}(B) \rightarrow X$ to $\RR^k_{>0}.$
The image of this restriction is the
{\em positive part} $X_{> 0}$ of the toric variety $X.$
The Euclidean closure of $X_{>0}$ in $X$ is denoted $X_{\geq 0}.$ 
If $X$ is projective with polytope $P$ then 
the moment map gives a homeomorphism from~$X_{> 0}$~onto~$P^\circ.$

Motivated by statistics (see Equation \eqref{eq:mli}), we wish to integrate \emph{meromorphic} $n$-forms with poles outside $X_{>0}$ over the nonnegative part $X_{\geq 0}.$ 
We here describe an explicit representation of such forms
via the Cox ring~$S.$ 
For any $n$-element subset $I \subset \Sigma(1),$ let $\det(V_I)$ denote the minor of $V$ indexed by $I.$ 
We define a meromorphic $n$-form on $\CC^k$ as follows:
\begin{equation} \label{eq:canonicalform}
	\Omega_X \,\,\,=\,\, \sum_{\substack{I \subset \Sigma(1),\, |I| = n}}  \det(V_I) \, \bigwedge_{\rho \in I} \frac{{\rm d} x_\rho}{x_\rho}.
\end{equation}
\vspace*{-1mm}

\noindent This is the \emph{canonical form} of the pair $(X,X_{\geq 0}),$ 
viewed as a \emph{positive geometry}, as explained by
Arkani-Hamed, Bai,  and Lam  in
\cite[Sect.~5.6.2]{arkani2017positive}. 
The canonical form $\Omega_X$
is the pullback under the quotient map $\pi$ of the $T$-invariant $n$-form $\bigwedge_{j = 1}^n \frac{{\rm d} t_j}{t_j} \in \Omega_T^n(T),$ where $t_j = \chi^{e_j}$ are coordinates on $T.$ This follows from \cite[Cor.~8.2.8]{cox2011toric}
by observing that $\pi^*(t_j) = \prod_{i=1}^k x_\rho^{\langle v_i, e_j \rangle}.$ By homogeneity, $\Omega_X$ can be viewed as a meromorphic form on $X.$
We will use $\Omega_X$ to denote this form on $\CC^k,$ on $X,$  as well as its restriction to~$X_{> 0}$ without mentioning the respective~transition. 

\smallskip

After scaling by a rational function,
the canonical form $\Omega_X$ defines a probability measure on $X_{> 0}.$
Given a rational function $f/g$ on $X$,
we are interested in the definite integral of the differential form $(f/g)\,\Omega_X$ over 
the positive toric variety $X_{>0}$. In symbols, this equals
\begin{equation}
	\label{eq:ourintegral}
	\mathcal{I} \,\,\,=\,\,\,
	\int_{X_{> 0}} \frac{f}{g} \,\Omega_X. 
\end{equation}

The integrals $\mathcal{I}$ 
appear as Feynman integrals in physics. The article
\cite{borinsky2020tropical} introduces tropical sampling
for Feynman integrals over the projective space $X = \mathbb{P}^n.$
The present paper generalizes that approach to the setting
where $X$ can be any toric variety.

\begin{remark} \label{rmk:canonical}
	The canonical form $\Omega_X$ of a toric variety $X$ is closely related 
	to the canonical sheaf $\omega_X.$ Indeed, by the discussion following 
	\cite[Corollary 8.2.8]{cox2011toric}, $\omega_X$ is  the sheaf of the cyclic graded $S$-module generated by the $n$-form $\left(\prod_{i=1}^k x_i \right ) \Omega_X.$ See also \cite[Proposition 2.1]{cox1996toric}. 
\end{remark}

We next explain how to understand and evaluate the integral \eqref{eq:ourintegral}.
Let $\gamma = \deg(f) = \deg(g) \in {\rm Cl}(X)$ be
the class of the divisor $D_c.$ The canonical isomorphism 
\[ S_{\gamma} \,\,= \!\!
\bigoplus_{a\,:\,V^\top a + c \, \geq 0} \!\! \CC \cdot \, x^{V^\top a \,+\, c} \, \, \simeq \bigoplus_{a\,:\,V^\top a + c \, \geq 0} \CC \cdot \, t^{a} \]
\vspace*{-2mm}

\noindent represents \emph{dehomogenization}. It sends $f$ and $g$ to Laurent polynomials $\hat{f}$ and $\hat{g},$ respectively. 

This is compatible with the quotient map $\pi: \CC^k \backslash {\cal V}(B) \rightarrow X$ in the following~way. The map of tori $\pi_{|(\CC^*)^k}: (\CC^*)^k \rightarrow T \subset X$ realizes the torus of $X$ as a geometric quotient \mbox{$T \simeq (\CC^*)^k / G.$} Further restricting to the positive part gives \mbox{$T_{>0} \simeq X_{>0}.$} Let \mbox{$\phi: X_{>0} \rightarrow T_{>0}$} denote this diffeomorphism. In Cox coordinates, $\phi$ is the monomial map given by the rows of $V,$ see \Cref{ex:pentagon1}. One checks easily that the functions $(f/g): X_{>0} \rightarrow \CC$ and \mbox{$(\hat{f}/\hat{g}): T_{>0} \rightarrow \CC$} satisfy $(f/g) = (\hat{f}/\hat{g}) \circ \phi.$ Moreover, $\Omega_X $ restricted to its dense torus is the form $\pi^* \left (\bigwedge_{j = 1}^n \frac{{\rm d} t_j}{t_j}\right ) $ which,  in turn, uniquely determines $\Omega_X.$
Those observations~imply the following proposition.

\begin{proposition} \label{prop:familiar}
	The integral  $\mathcal{I}$ in \eqref{eq:ourintegral} equals
	a more familiar integral over the positive orthant $T_{>0} = \RR^n_{>0},$ namely
	\begin{equation} \label{eq:familiarintegral}
		\mathcal{I}
		\,\,\,= \,\,\, \int_{T_{ > 0}} \frac{\hat{f}}{\hat{g}} \, \bigwedge_{j = 1}^n \frac{{\rm d} t_j}{t_j}.
	\end{equation}
\end{proposition}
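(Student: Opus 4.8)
The plan is to unwind the definitions and reduce everything to the change-of-variables formula for the diffeomorphism $\phi: X_{>0} \to T_{>0}$. The key observation, already recorded in the text preceding the statement, is the factorization $(f/g) = (\hat f/\hat g) \circ \phi$ on $X_{>0}$, together with the fact that $\Omega_X$ restricted to the dense torus is the pullback $\pi^*\left(\bigwedge_{j=1}^n \frac{{\rm d} t_j}{t_j}\right)$. Since $X_{>0}$ sits inside the dense torus orbit (it is the image of $\RR^k_{>0}$ under $\pi$, and $\RR^k_{>0} \subset (\CC^*)^k$), restricting to $X_{>0}$ and pushing forward along $\phi$ should turn the left-hand integral into the right-hand one.

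First I would make precise the assertion that $\phi$ is a diffeomorphism $X_{>0} \xrightarrow{\sim} T_{>0} = \RR^n_{>0}$ with $\phi^*\left(\bigwedge_{j=1}^n \frac{{\rm d} t_j}{t_j}\right) = \Omega_X|_{X_{>0}}$. This follows from $\Omega_X|_T = \pi^*\left(\bigwedge \frac{{\rm d} t_j}{t_j}\right)$ and the commuting relation $\phi \circ \pi|_{\RR^k_{>0}} = \pi|_{(\CC^*)^k}|_{\RR^k_{>0}}$ composed with the torus identification $T \simeq (\CC^*)^k/G$; concretely, in Cox coordinates $\phi$ is the monomial map given by the rows of $V$, i.e. $t_j \mapsto \prod_{i=1}^k x_i^{\langle v_i, e_j\rangle}$, and a direct logarithmic-derivative computation gives $\phi^*\frac{{\rm d} t_j}{t_j} = \sum_{i=1}^k \langle v_i, e_j\rangle \frac{{\rm d} x_i}{x_i}$, whose wedge reproduces $\Omega_X$ via the Cauchy–Binet expansion of $\det(V_I)$. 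Then I would invoke the change-of-variables theorem: for the orientation-preserving (or, up to sign, compatible) diffeomorphism $\phi$,
\[
\int_{X_{>0}} \frac{f}{g}\,\Omega_X \,\,=\,\, \int_{X_{>0}} \phi^*\!\left( \frac{\hat f}{\hat g} \bigwedge_{j=1}^n \frac{{\rm d} t_j}{t_j} \right) \,\,=\,\, \int_{T_{>0}} \frac{\hat f}{\hat g} \bigwedge_{j=1}^n \frac{{\rm d} t_j}{t_j},
\]
where the first equality uses the factorization of $f/g$ and the pullback formula for $\Omega_X$, and the second is the substitution rule for integrating a top-form under a diffeomorphism. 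Finally I would note that both sides are interpreted as improper integrals of a nonnegative (or fixed-sign) density, so the equality holds in $[0,\infty]$ including the divergent case, by monotone convergence applied to an exhaustion of $X_{>0}$ by compact subsets.

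The main obstacle is the orientation/sign bookkeeping and the justification that the part of $X_{\geq 0}$ outside $X_{>0}$ contributes nothing: the integral in \eqref{eq:ourintegral} is nominally over $X_{\geq 0}$, but the toric boundary $X_{\geq 0}\setminus X_{>0}$ has measure zero for the smooth measure $|\Omega_X|$ on the open part, so it can be dropped. One must also fix a consistent orientation of $X_{>0}$ — say the one pulled back from the standard orientation of $\RR^n_{>0}$ via $\phi$ — so that $\Omega_X$ and $\bigwedge \frac{{\rm d} t_j}{t_j}$ are genuinely compared as densities rather than merely as forms up to sign; with that convention the displayed chain of equalities is exact. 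Everything else is a routine application of naturality of pullback and the classical change-of-variables formula.
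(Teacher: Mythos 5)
Your proposal is correct and follows exactly the approach the paper sketches in the paragraph immediately preceding the proposition: identify $X_{>0}$ with $T_{>0}$ via the diffeomorphism $\phi$, use $(f/g)=(\hat f/\hat g)\circ\phi$ together with $\phi^*\bigl(\bigwedge_j\frac{\mathrm{d}t_j}{t_j}\bigr)=\Omega_X$ (the paper records this as $\Omega_X$ being the pullback under $\pi$ of the torus-invariant form, with $\pi^*(t_j)=\prod_i x_i^{\langle v_i,e_j\rangle}$), and apply the change-of-variables formula. You spell out the orientation and boundary-measure-zero points that the paper leaves implicit, and the wedge computation you invoke is the Leibniz/multilinear expansion $\bigwedge_j\sum_i V_{ji}\frac{\mathrm{d}x_i}{x_i}=\sum_{|I|=n}\det(V_I)\bigwedge_{i\in I}\frac{\mathrm{d}x_i}{x_i}$ rather than Cauchy--Binet proper, but that is a naming quibble, not a gap.
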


We conclude Section \ref{sec2} with a result on convergence.
This \ref{thm:convergent} generalizes \cite[Theorem~3]{borinsky2020tropical}.

\begin{theorem}\label{thm:convergent}
	Suppose that the Newton polytope of the denominator  $g$ is $n$-dimensional
	and contains that of the numerator $f$ in its
	relative interior.
	Then the integral \eqref{eq:ourintegral} converges.
\end{theorem}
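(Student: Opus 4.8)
First I would use \Cref{prop:familiar} to replace \eqref{eq:ourintegral} by the integral $\int_{\RR^n_{>0}} (\hat f/\hat g)\,\bigwedge_{j=1}^n \frac{{\rm d}t_j}{t_j}$ over the positive orthant, where $\hat f,\hat g$ are the dehomogenized Laurent polynomials. The hypotheses are to be read on the Newton polytopes $P = \mathrm{Newt}(\hat f)$ and $Q = \mathrm{Newt}(\hat g)$ inside $M\otimes_\ZZ\RR \simeq \RR^n$: the exponent vectors of the homogeneous $f,g \in S$ lie in one fiber of the ${\rm Cl}(X)$-grading, a coset of $\mathrm{im}\,V^\top$ and hence an affine subspace of $\RR^k$ of dimension exactly $n$, and the dehomogenization $S_\gamma \simeq \bigoplus_a \CC\, t^a$ is precisely the affine isomorphism of that coset with $\RR^n$; so $\mathrm{Newt}(f)\cong P$ and $\mathrm{Newt}(g)\cong Q$ as polytopes. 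Passing to logarithmic coordinates $t_j = e^{x_j}$ turns $\bigwedge_j \frac{{\rm d}t_j}{t_j}$ into Lebesgue measure on $\RR^n$ and $\hat f/\hat g$ into a ratio of exponential sums with positive coefficients; the integrand is then manifestly nonnegative, so $\mathcal I$ is a well-defined element of $[0,\infty]$ and it suffices to bound it. This reduction is the part specific to the toric generality; everything after it parallels the proof of \cite[Theorem~3]{borinsky2020tropical} for $X = \PP^n$.

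Next I would prove the elementary two-sided estimate that controls an exponential sum with positive coefficients by the support function of its Newton polytope: for $R = \sum_{a\in A} c_a\, t^a$ with all $c_a > 0$ and $h_A(x) := \max_{a\in A}\langle a,x\rangle$, one has
\[ \Bigl(\min_{a\in A} c_a\Bigr)\, e^{h_A(x)} \ \le\ R(e^{x_1},\dots,e^{x_n}) \ \le\ |A|\,\Bigl(\max_{a\in A} c_a\Bigr)\, e^{h_A(x)} \qquad \text{for all } x\in\RR^n, \]
the lower bound by retaining a dominant monomial, the upper bound by the triangle inequality. Applying this to $\hat f$ and to $\hat g$ produces a constant $C>0$ with $\hat f(e^x)/\hat g(e^x) \le C\, e^{\,h_P(x) - h_Q(x)}$ for all $x$. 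Finally I would invoke the hypothesis: since $Q$ is $n$-dimensional, $\mathrm{relint}(Q) = \mathrm{int}(Q)$, and as the compact set $P$ lies in this open set it has positive distance $\varepsilon$ from the complement of $\mathrm{int}(Q)$; hence the closed $\varepsilon$-neighborhood of $P$ is contained in $Q$, and additivity of support functions under Minkowski sums gives $h_Q(x) \ge h_P(x) + \varepsilon\lVert x\rVert$ for every $x$. Therefore $\hat f(e^x)/\hat g(e^x) \le C\, e^{-\varepsilon\lVert x\rVert}$, which is integrable over $\RR^n$, so $\mathcal I < \infty$.

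I do not expect a genuine obstacle here; the estimate is routine once \Cref{prop:familiar} is in hand. The one point needing care is the bookkeeping just indicated — matching the Newton polytopes of the homogeneous $f,g$ (which live on a grading fiber inside $\RR^k$) with the honest Newton polytopes $P,Q$ of $\hat f,\hat g$ in $\RR^n$, on which the full-dimensionality and relative-interior hypotheses are to be interpreted. A secondary point worth flagging is that positivity of the coefficients is used twice: to make the integrand nonnegative, so that ``converges'' is unambiguous, and to supply the lower bound in the two-sided estimate applied to $\hat g$.
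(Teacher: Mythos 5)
Your proof is correct, but it takes a genuinely different route from the paper's. The paper's proof of \Cref{thm:convergent} reduces, by linearity of the integral, to the case where $f$ is a single monomial; it then interprets the resulting integral as the Mellin transform of the polynomial $g$ and cites the convergence criterion of Nilsson and Passare \cite{NilssonPassare} for Mellin transforms of multivariate rational functions. Your argument is self-contained: after invoking \Cref{prop:familiar} and passing to logarithmic coordinates, you bound each exponential sum above and below by a constant times $e^{h}$, where $h$ is the support function of its Newton polytope --- this is essentially \Cref{prop:bounded} in the paper, which in the paper appears only later and is attributed to \cite{borinsky2020tropical}. You then turn the hypothesis $P \subset \operatorname{int}(Q)$ into a quantitative gap $h_Q(x) - h_P(x) \geq \varepsilon \lVert x \rVert$ via Minkowski-sum additivity of support functions, yielding exponential decay of the integrand. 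The citation route is shorter and, as the paper remarks immediately after the theorem, also gives the converse (necessity of the containment for positive $f,g$), which your one-sided estimate does not. Your elementary bound is closer in spirit to the tropical sector decomposition of \Cref{sec3}, which the paper flags at the end of that section as an alternative proof of \Cref{thm:convergent} avoiding \cite{NilssonPassare}. You also correctly handle the one piece of bookkeeping that actually needs care: matching the Newton polytopes of the ${\rm Cl}(X)$-homogeneous $f,g$, which live on a coset of $\operatorname{im} V^\top$ inside $\RR^k$, with the honest $n$-dimensional Newton polytopes of $\hat f, \hat g$ in $M \otimes_\ZZ \RR$, so that ``$n$-dimensional'' and ``relative interior'' translate correctly.
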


\begin{proof}
	We use the formulation in \eqref{eq:familiarintegral}.
	By linearity of the integral, it suffices to consider the case when
	$f$ is a monomial. In that  special case, our integral can be
	viewed as the Mellin transform of
	the polynomial $g$. Hence the analysis by Nilsson and Passare in~\cite{NilssonPassare} 
	applies here.
	Their result is stated for integrals over  $T_{\geq 0} = \RR^n_{\geq 0},$ so we can use it for~\eqref{eq:familiarintegral}. 
	The convergence result then follows from \cite[Theorem 1]{NilssonPassare}.
\end{proof}

\begin{remark} 
The hypothesis of \Cref{thm:convergent} will be satisfied for the Bayesian 
integrals that arise from our statistical models in Sections \ref{sec5} 
and \ref{sec6}. An example is the integrand in~\eqref{eq:unif}. The Newton 
polytope of the denominator is the standard $3$-cube scaled by a factor of 
two. Its unique interior lattice point is the Newton polytope of the monomial in the 
numerator.

In fact, it can be proven that the hypothesis of 
\Cref{thm:convergent} is also necessary for the convergence of 
\eqref{eq:ourintegral} as long as the polynomials $f$ and $g$ have positive 
coefficients. 
Hence, we can expect convergent statistical integrals over ratios of such polynomials to fulfill it.
\end{remark}

\section{Tropical Sampling} \label{sec3}
Our aim is to  evaluate the integral $\,\mathcal{I}\,$ in \eqref{eq:ourintegral} 
and \eqref{eq:familiarintegral}.
To this end,  we consider a tropicalized version of
the integral.  Following \cite{borinsky2020tropical}, we define the \emph{tropical approximation} of a polynomial $f \in S = \mathbb{C}[x_1, \ldots, x_k]$ to be the piecewise monomial function
\[ f^{\rm tr}\colon\,  \mathbb{R}_{> 0}^k \longrightarrow \mathbb{R}_{> 0}, \quad x ~ \mapsto \max_{\ell \in \operatorname{supp}(f)}  x^\ell .  \]
This differs in two aspects from the textbook definition of tropicalization
in \cite{maclagan2009introduction}. First, we adopt the max-convention.
Second, we use monomials $x^\ell$ instead of linear forms $\langle x,\ell \rangle.$
Thus $f^{\rm tr}$ is the exponential of the piecewise-linear convex function ${\rm trop}(f)$
usually derived from $f$.

If $f \in S \backslash \{0\}$ is homogeneous with positive coefficients, then
the ratio $f(x)/f^{\rm tr}(x)$ is a well-defined function on $\mathbb{R}^k_{>0} \subset \mathbb{C}^k \backslash \mathcal{V}(B)$ 
and  is constant on $G$-orbits. It induces a function $X_{>0} \rightarrow \mathbb{R}_{>0},$ which takes $x \in X_{>0}$ to $f(x')/f^{\rm tr}(x')$ for any $x' \in \pi^{-1}(x).$ 
Employing a slight abuse 
of notation,  the ratio $f(x)/f^{\rm tr}(x)$ also
denotes that function on $X_{>0}.$

As a special case of \cite[Theorems 8A and 8B]{borinsky2020tropical}, where also polynomials with negative or complex 
coefficients are allowed, such functions are bounded above and below:

\begin{proposition}\label{prop:bounded}
	Suppose that $f(x) = \sum_{\ell \in \operatorname{supp}(f)} f_\ell \, x^\ell$ has positive coefficients, 
	and set $\,C_1 = \min_{\ell \in \operatorname{supp}(f)} f_\ell\,$ and 
	$\,C_2 = \sum_{\ell \in \operatorname{supp}(f)} f_\ell.$
	Then, we have
	\[
	0 \,<\, C_1 \,\leq \,\frac{f(x)}{f^{\rm tr}(x)} \,\leq\, C_2 \,< \,\infty \quad \text{for all $\,x \in X_{>0}$}.
	\]
\end{proposition}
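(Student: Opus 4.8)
The plan is to reduce immediately to a pointwise inequality on $\mathbb{R}^k_{>0}$ and then verify it by elementary estimates on finite sums of positive terms. As noted in the paragraph preceding the proposition, the ratio $f(x)/f^{\rm tr}(x)$ is $G$-invariant on $\mathbb{R}^k_{>0}$ and the induced function on $X_{>0}$ takes the value $f(x')/f^{\rm tr}(x')$ for any representative $x' \in \pi^{-1}(x)$; hence it suffices to establish the chain $C_1 \le f(x')/f^{\rm tr}(x') \le C_2$ for every $x' \in \mathbb{R}^k_{>0}$, and the bounds then transfer verbatim to $X_{>0}$.

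For the upper bound, fix $x \in \mathbb{R}^k_{>0}$ and recall $f^{\rm tr}(x) = \max_{\ell \in \operatorname{supp}(f)} x^\ell$. Since every monomial $x^\ell$ with $\ell \in \operatorname{supp}(f)$ satisfies $x^\ell \le f^{\rm tr}(x)$ and every coefficient $f_\ell$ is positive, I would write
\[
f(x) \,=\, \sum_{\ell \in \operatorname{supp}(f)} f_\ell\, x^\ell \,\le\, \Bigl(\sum_{\ell \in \operatorname{supp}(f)} f_\ell\Bigr) f^{\rm tr}(x) \,=\, C_2\, f^{\rm tr}(x),
\]
and dividing by $f^{\rm tr}(x) > 0$ gives $f(x)/f^{\rm tr}(x) \le C_2$. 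For the lower bound, let $\ell^\star \in \operatorname{supp}(f)$ be an index attaining the maximum, so $x^{\ell^\star} = f^{\rm tr}(x)$. Since all summands are positive, $f(x) \ge f_{\ell^\star}\, x^{\ell^\star} = f_{\ell^\star}\, f^{\rm tr}(x) \ge C_1\, f^{\rm tr}(x)$, whence $f(x)/f^{\rm tr}(x) \ge C_1$.

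Finally I would record that $\operatorname{supp}(f)$ is a nonempty finite set because $f \in S \setminus \{0\}$, so $C_2$ is a finite sum and $C_1$ a minimum over finitely many strictly positive numbers; thus $0 < C_1$ and $C_2 < \infty$, completing the four-term inequality. There is essentially no obstacle here: the statement is a direct consequence of positivity of the coefficients together with the definition of $f^{\rm tr}$ as the pointwise maximum of the monomials, and the only point worth a sentence is the passage from $\mathbb{R}^k_{>0}$ to $X_{>0}$, which is already justified in the text via $G$-invariance. (One could alternatively cite \cite[Theorems 8A and 8B]{borinsky2020tropical} directly, but giving the two-line argument makes the statement self-contained.)
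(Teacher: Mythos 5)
Your proof is correct, and it is the natural elementary argument one would give. Worth noting: the paper does not actually prove this proposition at all; it simply invokes it as a special case of Theorems~8A and~8B of \cite{borinsky2020tropical}, where a more general statement (allowing negative or complex coefficients) is established. Your two-line argument---upper bound by summing $f_\ell x^\ell \le f_\ell f^{\rm tr}(x)$ term by term, lower bound by keeping only the maximizing term $f_{\ell^\star} x^{\ell^\star} = f_{\ell^\star} f^{\rm tr}(x)$ and discarding the rest (all positive)---is exactly the specialization of that cited result to the positive-coefficient case, and making it explicit as you do is a reasonable choice since it renders the statement self-contained. The reduction from $X_{>0}$ to $\mathbb{R}^k_{>0}$ via $G$-invariance, and the finiteness/nonemptiness of $\operatorname{supp}(f)$, are correctly flagged; no gaps.
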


We assume from now on that
$f$ and $g$ are homogeneous polynomials in $S,$ with positive coefficients,
of the same degree in ${\rm Cl}(X),$ and that the hypothesis of Theorem 
\ref{thm:convergent} is satisfied.

\begin{corollary}
	The following integral over the positive toric variety is~finite:
	\begin{equation}
		\label{eq:Itr}
		\mathcal I^{\rm tr}
		\,\,\,=\,\,\int_{X_{> 0}}
		\frac{f^{\rm tr}}{ g^{\rm tr} } \,\Omega_X.
	\end{equation}
\end{corollary}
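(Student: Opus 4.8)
The plan is to reduce the finiteness of $\mathcal{I}^{\rm tr}$ to the already-established \Cref{thm:convergent}, by comparing the tropical integrand $f^{\rm tr}/g^{\rm tr}$ with an honest ratio of polynomials with positive coefficients satisfying the Newton-polytope hypothesis. First I would recall that, by \Cref{prop:bounded} applied to $g$, there is a constant $C>0$ with $g^{\rm tr}(x) \geq C^{-1} g(x)$ for all $x \in X_{>0}$, hence $1/g^{\rm tr}(x) \leq C/g(x)$. For the numerator, since $f$ has positive coefficients, $f^{\rm tr}(x) = \max_{\ell \in \operatorname{supp}(f)} x^\ell \leq \sum_{\ell \in \operatorname{supp}(f)} x^\ell \leq (\min_\ell f_\ell)^{-1} f(x)$, so $f^{\rm tr}$ is bounded above by a positive-coefficient polynomial with the same Newton polytope as $f$. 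Combining, the integrand of $\mathcal{I}^{\rm tr}$ is bounded pointwise on $X_{>0}$ by $C'\, f(x)/g(x)$ for some constant $C'$, and the Newton polytope hypothesis on the pair $(f,g)$ is exactly the hypothesis of \Cref{thm:convergent}. Since all forms here are positive on $X_{>0}$, domination of the integrand gives $\mathcal{I}^{\rm tr} \leq C' \mathcal{I} < \infty$.

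To make this rigorous I would work with the pushed-forward integral over $T_{>0} = \RR^n_{>0}$ via \Cref{prop:familiar}: the diffeomorphism $\phi : X_{>0} \to T_{>0}$ is the monomial map given by the rows of $V$, and one checks that $f^{\rm tr}(x)/g^{\rm tr}(x) = \hat f^{\rm tr}(t)/\hat g^{\rm tr}(t) \circ \phi$ in the same way that $(f/g) = (\hat f/\hat g)\circ\phi$, because tropicalization commutes with the monomial substitution $t_j = \prod_i x_i^{\langle v_i,e_j\rangle}$ (it simply transports supports by the linear map $V$, which also sends $\operatorname{Newt}(\hat f) \subset \operatorname{relint}\operatorname{Newt}(\hat g)$ appropriately). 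Thus $\mathcal{I}^{\rm tr} = \int_{T_{>0}} \frac{\hat f^{\rm tr}}{\hat g^{\rm tr}} \bigwedge_j \frac{{\rm d} t_j}{t_j}$, and the pointwise bound above — now a bound by $C'\, \hat f(t)/\hat g(t)$, a ratio of positive-coefficient Laurent polynomials with $\operatorname{Newt}(\hat f) \subset \operatorname{relint}\operatorname{Newt}(\hat g)$ — reduces finiteness to \Cref{thm:convergent} (or directly to \cite[Theorem 1]{NilssonPassare}).

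I expect the only delicate point to be the bookkeeping that the Newton-polytope hypothesis is genuinely inherited: one must confirm that replacing $f^{\rm tr}$ by an upper-bounding positive-coefficient polynomial does not enlarge the relevant Newton polytope beyond $\operatorname{Newt}(f)$ (it does not, since $f^{\rm tr}$ and $f$ share the support, hence the Newton polytope), and that bounding $g^{\rm tr}$ below by a constant times $g$ does not shrink $\operatorname{Newt}(g)$ — which is clear since the bound is multiplicative by a constant. Everything else is a routine monotonicity argument for integrals of nonnegative functions, so this is where I would be brief. An alternative, entirely self-contained route avoids \Cref{thm:convergent} and instead verifies convergence of $\int_{T_{>0}} (\hat f^{\rm tr}/\hat g^{\rm tr}) \bigwedge_j {\rm d}t_j/t_j$ directly: the integrand is piecewise a monomial $t^{u}$ on the polyhedral chambers dual to the normal fans of $\operatorname{Newt}(\hat g) - \operatorname{Newt}(\hat f)$-type subdivisions, and on each such chamber the exponent vector $u$ lies strictly in the interior of the appropriate cone, making each chamber contribution a convergent geometric-type integral; but the comparison argument via \Cref{thm:convergent} is shorter, so that is the one I would write.
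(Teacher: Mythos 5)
Your argument is correct and is essentially the paper's own proof: both bound $f^{\rm tr}/g^{\rm tr}$ above by a constant multiple of $f/g$ via \Cref{prop:bounded} applied to $f$ and $g$, then invoke finiteness of $\mathcal{I}$ from \Cref{thm:convergent} and positivity of the integrand. The detour through $T_{>0}$ and the re-verification of Newton-polytope hypotheses for the bounding polynomials are unnecessary (the bounding polynomials are just scalar multiples of $f$ and $g$), and the paper dispenses with them, working directly on $X_{>0}$ — but the core comparison is the same.
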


\begin{proof}
	The tropical function $f^{\rm tr}/g^{\rm tr}$ is positive on $X_{> 0}$
	and it is bounded above
	by a constant times the classical function $f/g.$ This follows by
	applying Proposition  \ref{prop:bounded} to both $f$ and $g.$
	Since the integral over $f/g$ is finite, so is the integral over~$f^{\rm tr}/g^{\rm tr}.$
\end{proof}

The function $f^{\rm tr}/g^{\rm tr}$ is piecewise monomial on $X_{> 0}.$
The pieces are the sectors to be described below. 
The integral of each monomial over its sector is given in Theorem 
\ref{thm:sectorformula}.
The value of
$\mathcal I^{\rm tr}$ is the sum
\eqref{eq:integralsum}
of  the sector integrals.
Here is an illustration:

\begin{example}[Classical integral versus tropical integral] \label{ex:trivial}
	We fix the projective line $X = \PP^1$ with
	coordinates
	$(x_0:x_1).$ The  following binary cubics satisfy our convergence hypotheses:
	$$ f \,=\, x_0^2 x_1 \quad {\rm and} \quad g \,=\, (x_0 + x_1)(x_0+3 x_1)(5 x_0 + x_1) . $$
	The corresponding tropical polynomial functions on the line segment $X_{\geq 0}=\PP^1_{\geq 0}$ are
	$$ f^{\rm tr} \,=\, f = x_0^2 x_1 \quad {\rm and} \quad 
	g^{\rm tr} \,=\, \begin{cases} \,x_0^3 & {\rm if} \,\,x_0 \geq x_1, \\ \,x_1^3 & {\rm if} \,\,x_0 \leq x_1. \end{cases} $$
	The classical integral \eqref{eq:ourintegral} equals
	$\frac{1}{56} (6 \,{\rm ln}(3)- {\rm ln}(5)) = 0.088968....$
	We find this on either chart $\{x_0=1\}$ or $\{x_1=1\}.$
	The tropical integral \eqref{eq:Itr} evaluates to $1 + \frac{1}{2} = \frac{3}{2}.$
	We integrate the two monomials in $f^{\rm tr}/g^{\rm tr}$ over
	the sectors $\{ x_0 \geq x_1\}$ and $\{x_0 \leq x_1\}.$
\end{example}

Since the tropical integral is easier to compute, we now rewrite~\eqref{eq:ourintegral} as
\begin{equation} \label{eq:omegatr}
	\mathcal{I} \,\,=\,\,
	\int_{X_{ > 0}}\! \frac{f}{ g } \,\Omega_X
	\,\,\,
	=\,\,\,
	\mathcal I^{\rm tr}
	\cdot
	\int_{X_{ > 0}} \!\!
	h
	\,\,
	\mu^{\rm tr}_{f,g}\, ,
\end{equation} 
where
\[ 
h \,=\, 
\frac{f \cdot g^{\rm tr}}{ g \cdot f ^{\rm tr} } 
\quad
\text{ and }
\quad 
\mu^{\rm tr}_{f,g}
\,=\,
\frac{1}{
	\mathcal I^{\rm tr}
}
\frac{f^{\rm tr}}{ g^{\rm tr} } \,\Omega_X. \qquad \qquad
\]
The function $h$ is positive and bounded on $X_{> 0},$ again by \Cref{prop:bounded}. 
The differential form $\mu^{\rm tr}_{f,g}$ is nonnegative on $X_{>0},$ and 
it integrates to $1.$ In symbols, 
$$\int_{X_>0} \mu_{f,g}^\text{tr}\,\,= \,\,1.$$

Viewed statistically, the following function 
is a {\em probability density} on $X_{> 0}$:
\begin{equation}\label{eq:dfgtr}
	d_{f,g}^{\,\text{tr}}\,\,\coloneqq \,\,\frac{1}{\mathcal{I}^\text{tr}} \frac{f^\text{tr}}{g^\text{tr}}.
\end{equation}
This density is given in terms of $\mathcal{I}^{\text{tr}}$ and the tropical approximations of $f$ and $g.$
For brevity, we refer to $d_{f,g}^{\,\text{tr}}$ as the {\em tropical density}. 
In those terms, $\mu_{f,g}^\text{tr} =d_{f,g}^\text{tr}\cdot \Omega_X$  is a {\em probability measure} 
on $X_{>0}$ and the pair $(X_{>0},\mu_{f,g}^\text{tr})$ is a {\em probability space}.  
For basic terminology from probability, we refer to the textbook~\cite{Stirzaker},
and to the guided tours in  \cite[Chapter 1]{Gelman} and \cite[Chapter 2]{Sullivant}.

If we can draw samples from the distribution defined by the tropical density, 
then we can use \emph{Monte Carlo integration} to estimate the integral~\eqref{eq:ourintegral}.
Furthermore, using \emph{rejection sampling}, we can also produce samples from the  classical  
density $d_{f,g} = \frac{1}{\mathcal I} \,\frac{f}{g}$ on~$X_{>0},$ where the value of the integral~\eqref{eq:ourintegral} plays the role of a 
normalization factor.
We will describe these computations in the next section.
They play a fundamental role in Bayesian inference.
For an introduction to Bayesian statistics~see~\cite{Gelman}.

In the remainder of this section, we present our \emph{tropical sampling algorithm},
for sampling from the probability distribution on~$X_{> 0}$ that is given by the tropical density
$d_{f,g}^{\,\text{tr}}.$
This algorithm was introduced in \cite{borinsky2020tropical} for the
special case of projective space $X = \PP^n,$
and it was successfully applied to Feynman integrals.
We here extend it to other toric varieties $X.$

The Newton polytopes $\mathcal{N}(f)$ and $\mathcal{N}(g)$ of 
the homogeneous polynomials $f$ and $g$ 
live in $\RR^k,$ but their dimension is at most $n,$ since
they lie in an affine translate  of ${\rm im}_\RR(V^\top) \simeq \RR^n.$
	In light of Theorem \ref{thm:convergent}, we assume that $\mathcal{N}(g)$ has the maximal dimension $n.$ 
We are interested in the normal fan of the
$n$-dimensional polytope $\mathcal{N}(f) + \mathcal{N}(g) = \mathcal{N}(fg),$ which lies in a different affine translate of ${\rm im}_\RR(V^\top).$
Its normal fan has the lineality space $K := {\rm ker}(V) \simeq \RR^{k-n},$ so
that fan can be seen as a pointed fan in $\RR^k / \, K \simeq \RR^n.$ 
We fix a simplicial refinement $\mathcal{F}$ of this normal fan.
Each maximal cone of $\mathcal{F}$ is spanned by $n$ linearly independent vectors,
and the union of these cones covers $\RR^k / \, K .$
We alert the reader that there are now two different fans:
$\Sigma$ is the fan of the toric variety~$X,$ whereas 
$\mathcal{F}$ comes from our polynomials $f$ and $g.$

\begin{example}
	\label{ex:genperm}
	In the application to Feynman integrals in \cite{borinsky2020tropical},
	the polynomials $f$ and $g$ are  {\em Symanzik polynomials}.
	Their Newton polytopes are {\em generalized permutohedra}
	\cite[Section~6]{borinsky2020tropical}. For such integrals,
	we can take $\mathcal{F}$ to be the fan determined by the
	hyperplanes $\{x_i = x_j\}.$
	The computational results in
	\cite[Section 7.4]{borinsky2020tropical} rely on this
	special combinatorial structure.
\end{example}

We now abbreviate $e^y = (e^{y_1}, \ldots, e^{y_k}),$ and we define the  \emph{exponential map} 
\begin{equation}
	\label{eq:exponentialmap}
	{\rm Exp}\,:\, \RR^k/\, K \,\rightarrow\, X_{>0}\,, \quad [ (y_1, \ldots, y_k)]\, \mapsto \, \pi(e^y) . 
\end{equation}
Here $\pi : \CC^k \setminus {\cal V}(B) \rightarrow X$ is the quotient map from Section \ref{sec2}. The map ${\rm Exp}$ is well-defined since the subspace $K$ is mapped into the image of  $G$ under the 
homomorphism
$\RR^k \rightarrow (\CC^*)^k,\, y \mapsto e^y,$ cf.~the exact sequences \eqref{eq:SESCl} and \eqref{eq:dualSESCl}. 

\begin{remark}
	The exponential map is an inverse to \emph{tropicalization}. The co\-ordi\-nate-wise logarithm  map 
	$\RR_{>0}^k \rightarrow \RR^k$  turns the multiplicative action of $G$ into an additive action of~$K.$ 
	That is, it induces a map ${\rm Log}: X_{>0} \rightarrow \RR^k/ \, K.$ We refer
	to \cite[Chapter~6]{maclagan2009introduction} for  details.
\end{remark}

We continue to retain the  hypotheses 
$\,{\rm dim}(\,\mathcal{N}(g)\,) = n\,$ and $\,\mathcal{N}(f) \subseteq {\rm relint}\,\mathcal{N}(g).$

\begin{lemma} \label{lem:geq0}
	For all nonzero elements \mbox{$y \in \RR^k / \, K,$} we have
	\begin{equation} \label{eq:geq0}
		\max_{\nu \in \mathcal{N}(g)} y \cdot \nu \,-\, \max_{\nu \in \mathcal{N}(f)} y \cdot \nu \,\,>\,\, 0.
	\end{equation} 
\end{lemma}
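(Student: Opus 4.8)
The plan is to reduce the inequality to the hypothesis that $\mathcal{N}(f)$ lies in the relative interior of $\mathcal{N}(g)$, exploiting that both Newton polytopes live in the same affine translate of $\operatorname{im}_\RR(V^\top)$, whose associated quotient direction $K = \ker(V)$ is precisely the lineality that we have already modded out. First I would observe that $y \in \RR^k/K$ pairs well-definedly against any two points of $\mathcal{N}(g)$ (or $\mathcal{N}(f)$) that differ by a vector in $\operatorname{im}_\RR(V^\top)$: indeed $K = \ker(V)$ is the annihilator of $\operatorname{im}_\RR(V^\top)$ under the standard pairing, so the \emph{difference} $y\cdot(\nu_1-\nu_2)$ depends only on the class $[y]$. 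Since $\mathcal{N}(f)$ and $\mathcal{N}(g)$ both sit inside a single translate of $\operatorname{im}_\RR(V^\top)$, the quantity in \eqref{eq:geq0} is a genuine function of $[y]$, even though the individual maxima are not.

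Next I would argue the strict inequality. Pick any point $\nu_f^\star \in \mathcal{N}(f)$ attaining $\max_{\nu\in\mathcal{N}(f)} y\cdot\nu$. Because $\mathcal{N}(f)\subseteq\operatorname{relint}\mathcal{N}(g)$ and $\mathcal{N}(g)$ has full dimension $n$ inside the relevant affine subspace, there is $\varepsilon>0$ with $\nu_f^\star + \varepsilon\, w \in \mathcal{N}(g)$ for every direction $w$ in the linear span of $\mathcal{N}(g)-\mathcal{N}(g)$, i.e. for every $w \in \operatorname{im}_\RR(V^\top)$ of norm $1$. Choose $w$ so that $y\cdot w > 0$: this is possible precisely when the orthogonal projection of $y$ onto $\operatorname{im}_\RR(V^\top)$ is nonzero, which is exactly the condition that $[y]\neq 0$ in $\RR^k/K$. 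Then
\begin{equation*}
\max_{\nu\in\mathcal{N}(g)} y\cdot\nu \,\geq\, y\cdot(\nu_f^\star + \varepsilon\, w) \,=\, \max_{\nu\in\mathcal{N}(f)} y\cdot\nu \,+\, \varepsilon\,(y\cdot w) \,>\, \max_{\nu\in\mathcal{N}(f)} y\cdot\nu,
\end{equation*}
which is \eqref{eq:geq0}. The inclusion $\mathcal{N}(f)\subseteq\mathcal{N}(g)$ alone would give "$\geq 0$"; the relative-interior hypothesis upgrades it to "$>0$" for all nonzero $[y]$.

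The main obstacle, and the point that needs care, is the bookkeeping about which ambient space the pairing lives in: $y$ is a class in $\RR^k/K$ while $\nu$ ranges over a polytope in $\RR^k$, so one must check that "$y \cdot \nu$" in the statement really means the pairing of $[y]$ against a point of the affine span, and that the direction $w$ used above indeed lies in $\operatorname{im}_\RR(V^\top) = K^\perp$ so that $y\cdot w$ is well-defined and can be made positive exactly when $[y]\neq 0$. Once this linear-algebra dictionary ($K = \ker V$, $\operatorname{im}_\RR(V^\top) = K^\perp$, $(\RR^k/K)^\vee \simeq \operatorname{im}_\RR(V^\top)$) is written out cleanly, the geometric step is immediate from the definition of relative interior. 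I would also remark that, by continuity and compactness, the inequality can be made uniform: $\min_{\|[y]\|=1}\big(\max_{\nu\in\mathcal{N}(g)} y\cdot\nu - \max_{\nu\in\mathcal{N}(f)} y\cdot\nu\big) =: \delta > 0$, since the left-hand side is a continuous function on the compact unit sphere of $\RR^k/K$, strictly positive by the above; this quantitative form is what one actually wants for the later convergence and runtime estimates.
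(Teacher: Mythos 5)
Your proof is correct and uses essentially the same idea as the paper: the paper argues by contradiction (if the two maxima agreed, the maximizer $\nu_f$ of the non-constant linear functional $\nu\mapsto y\cdot\nu$ would lie in the relative boundary of $\mathcal{N}(g)$, contradicting $\mathcal{N}(f)\subseteq\operatorname{relint}\mathcal{N}(g)$), while you run the same geometric observation in the contrapositive, perturbing $\nu_f^\star$ inside $\mathcal{N}(g)$ in a direction $w\in\operatorname{im}_\RR(V^\top)=K^\perp$ with $y\cdot w>0$. Your version is a bit more explicit about the linear-algebra dictionary and the role of $\dim\mathcal{N}(g)=n$, but the mathematical content matches the paper's.
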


\begin{proof}
	The left hand side of the inequality is well-defined modulo $K,$ because
	both Newton polytopes lie in the same affine translate of $K.$ Suppose the
	two maxima are attained for $\nu_f \in \mathcal{N}(f)$ and $\nu_g \in \mathcal{N}(g).$
	If $y \cdot \nu_g \leq y \cdot \nu_f$ were to hold, then $\nu_f $ lies in both
	$\mathcal{N}(f)$ and the boundary of $\mathcal{N}(g).$
	This contradicts our hypothesis. We therefore have $y \cdot \nu_g > y \cdot \nu_f.$
\end{proof}

\begin{lemma} \label{lem:geq1}
	Fix a cone $\sigma $ in the simplicial fan $ \mathcal{F}$ and 
	consider any vertices $\nu_f$ and $\nu_g$ of the corresponding faces of 
	the Newton polytopes $\mathcal{N}(f)$ and $\mathcal{N}(g).$ Then
	\[ \qquad \frac{f^{\rm tr}(x)}{g^{\rm tr}(x)}
	\,\,=\,\, x^{-(\nu_g - \nu_f)}  \quad \text{for all $\,x \in \RR^k$ such that
		$\,\pi(x) \in {\rm Exp}(\sigma).$}\]
\end{lemma}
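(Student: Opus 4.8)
The plan is to pull everything back to $\RR^k$ along the exponential map. Since ${\rm Exp}\colon \RR^k/K \to X_{>0}$ is a bijection, with inverse the map ${\rm Log}$ of the preceding remark, a point $x\in\RR^k_{>0}$ satisfies $\pi(x)\in{\rm Exp}(\sigma)$ exactly when $x = e^y$ for some $y\in\RR^k$ with $[y]\in\sigma$. First I would record that for such $x$,
\[ f^{\rm tr}(x) \,=\, \max_{\ell\in\operatorname{supp}(f)} e^{\,y\cdot\ell} \,=\, \exp\!\Big(\max_{\nu\in\mathcal{N}(f)} y\cdot\nu\Big), \]
since the maximum of a linear functional over $\mathcal{N}(f) = \operatorname{conv}(\operatorname{supp}(f))$ is attained at a point of $\operatorname{supp}(f)$; the analogous identity holds for $g$. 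Hence
\[ \frac{f^{\rm tr}(x)}{g^{\rm tr}(x)} \,=\, \exp\!\Big(\max_{\nu\in\mathcal{N}(f)} y\cdot\nu \,-\, \max_{\nu\in\mathcal{N}(g)} y\cdot\nu\Big), \]
and the lemma reduces to the claim that, for every $y$ with $[y]\in\sigma$, these two maxima are attained at the prescribed vertices $\nu_f$ and $\nu_g$, so that the exponent equals $y\cdot(\nu_f-\nu_g)=-\,y\cdot(\nu_g-\nu_f)$.

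The geometric input is that $\mathcal{F}$ refines not only the normal fan of $\mathcal{N}(fg)=\mathcal{N}(f)+\mathcal{N}(g)$ but also the normal fans of $\mathcal{N}(f)$ and of $\mathcal{N}(g)$ individually — because the normal fan of a Minkowski sum is the common refinement of the normal fans of the summands, and any refinement of that common refinement refines each summand's fan. Consequently, as $y$ ranges over the relative interior of $\sigma$, the faces of $\mathcal{N}(f)$ and $\mathcal{N}(g)$ on which $\nu\mapsto y\cdot\nu$ is maximized do not depend on $y$: these are precisely the ``corresponding faces'' in the statement, and $\nu_f$, $\nu_g$ are vertices of them. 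I would then extend this from ${\rm relint}\,\sigma$ to the closed cone $\sigma$: a boundary point $y\in\sigma$ lies in a proper face of the minimal cone of the normal fan of $\mathcal{N}(f)$ containing $\sigma$, and passing to a face of a normal cone only enlarges the dual face of $\mathcal{N}(f)$; hence the maximizing face for such $y$ still contains the one attached to $\sigma$, so $\nu_f$ remains a maximizer of $y\cdot\nu$ over $\mathcal{N}(f)$, and likewise $\nu_g$ over $\mathcal{N}(g)$. (\Cref{lem:geq0} is the special case where one records only that the two maxima are distinct.) Substituting back yields $f^{\rm tr}(x)/g^{\rm tr}(x)=x^{-(\nu_g-\nu_f)}$.

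To finish I would note that the right-hand side is genuinely a function on $X_{>0}$: since $\nu_f\in\operatorname{supp}(f)$ and $\nu_g\in\operatorname{supp}(g)$ have the common ${\rm Cl}(X)$-degree $\deg(f)=\deg(g)$, the exponent $\nu_g-\nu_f$ lies in ${\rm im}(V^\top)$, so the monomial $x^{-(\nu_g-\nu_f)}$ is $G$-invariant and constant along $K$; and by the previous paragraph any two vertices of a given face agree against all $y\in\sigma$, so the monomial does not depend on which vertices are chosen. The step that I expect to need the most care is exactly the passage from ${\rm relint}\,\sigma$ to the closed cone $\sigma$ — making precise that the maximizing face can only grow on the boundary — while the rest is a direct unwinding of the definition of the tropical approximation.
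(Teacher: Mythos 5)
Your proof is correct and follows essentially the same route as the paper's: pull back along $\operatorname{Exp}$, recognize $\log f^{\rm tr}(x)$ as the support function of $\mathcal{N}(f)$ evaluated at $y$, and use that this support function is linear on the closed cone $\sigma$ because $\mathcal{F}$ refines the normal fan of $\mathcal{N}(f)+\mathcal{N}(g)$. The only difference is exposition: the paper invokes ``linear on $\sigma$ by definition of the normal fan'' in one line, whereas you spell out the common-refinement and relative-interior-to-closure steps (and the well-definedness of $x^{-(\nu_g-\nu_f)}$), all of which the paper treats as implicit.
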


\begin{proof}
	By definition of the normal fan,  the two functions
	$y \mapsto \max_{\nu \in \mathcal{N}(f)} y \cdot \nu$ and 
	\mbox{$y \mapsto \max_{\nu \in \mathcal{N}(g)} y \cdot \nu$} are linear on the cone $\sigma.$
	Let $y \in \sigma $ satisfy $\operatorname{Exp}(y) = x.$
	We~have
	\[
	\log f^{\rm tr}(x) \,=\, \log \max_{\ell \in \textrm{supp}(f)} x^\ell \, = \,
	\max_{\ell \in \textrm{supp}(f)} \ell \cdot y
	\, =\,
	\max_{\nu \in \mathcal{N}(f)} \nu \cdot y \, = \, \nu_f \cdot y.
	\]
	Similarly, $ \log g^{\rm tr}(x) \,= \,\nu_g \cdot y.$
	Applying the exponential function yields the assertion.
\end{proof}

To evaluate the integral \eqref{eq:ourintegral},
we use the factorization in \eqref{eq:omegatr}. The first task is
to evaluate the tropical integral $\mathcal{I}^{\rm tr}.$
Since ${\rm Exp}$ is a bijection, we  use the decomposition
\begin{equation}
	\label{eq:integralsum} \mathcal{I}^{\rm tr} \, \, = \,\,
	\sum_{\sigma \in \mathcal{F}(n)} \! \mathcal{I}^{\rm tr}_\sigma
	\qquad {\rm where} \qquad
	\mathcal I^{\rm tr}_\sigma
	\,\,\,=\,\,\int_{{\rm Exp}(\sigma)}
	\frac{f^{\rm tr}}{ g^{\rm tr} } \,\Omega_X.
\end{equation}
The positive toric variety $X_{> 0}$ is partitioned into the
{\em sectors} ${\rm Exp}(\sigma).$ Each tropical integral $\,\mathcal{I}^{\rm tr}_\sigma$
is the integral of a  Laurent monomial  of degree zero, namely
$x^{-\delta_\sigma},$ where  $\delta_\sigma = \nu_g - \nu_f.$ 
The integral of $x^{-\delta_\sigma} $ over all of $X_{>0}$ diverges, but our set-up 
ensures that it converges on the sector ${\rm Exp}(\sigma).$
We saw this for $X = \PP^1$ in \Cref{ex:trivial}.

We next present a formula for  the integral
$\,\mathcal{I}^{\rm tr}_\sigma.$ We fix a matrix  $W = [w_1 \,\cdots \, w_n] \in \RR^{k\times n}$ 
whose $n$ column vectors $w_\ell$ generate the  simplicial cone $\sigma$ in $\RR^k / K .$

\begin{theorem} \label{thm:sectorformula}
	The tropical sector integral in \eqref{eq:integralsum} is equal to
	\begin{equation}
		\label{eq:detVU} \mathcal I^{\rm tr}_\sigma \,\,=\,\,
		\frac{\det(V W)}{\prod_{\ell = 1}^n w_\ell \cdot \delta_\sigma }. 
	\end{equation}
\end{theorem}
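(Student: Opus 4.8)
The plan is to reduce the sector integral to an elementary integral over the positive orthant $\RR^n_{>0}$ by an explicit change of variables, using \Cref{lem:geq1} to identify the integrand on ${\rm Exp}(\sigma)$ as a single monomial. First I would invoke \Cref{lem:geq1}: on the sector ${\rm Exp}(\sigma)$ the tropical ratio $f^{\rm tr}/g^{\rm tr}$ equals the Laurent monomial $x^{-\delta_\sigma}$ with $\delta_\sigma = \nu_g - \nu_f$, so that $\mathcal{I}^{\rm tr}_\sigma = \int_{{\rm Exp}(\sigma)} x^{-\delta_\sigma}\,\Omega_X$. Note $\delta_\sigma$ lies in $\mathrm{im}_\RR(V^\top)$ (both Newton polytopes sit in the same translate of it), so the integrand is genuinely a degree-zero rational function on $X$, as required. \Cref{lem:geq0} guarantees $w_\ell \cdot \delta_\sigma > 0$ for each generator $w_\ell$ of $\sigma$ (a nonzero element of $\RR^k/K$), which is exactly the positivity needed for the denominator in \eqref{eq:detVU} and for convergence of the orthant integral below.

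Next I would set up the parameterization. The columns $w_1,\dots,w_n$ of $W$ generate $\sigma$, and the exponential map ${\rm Exp}\colon \RR^k/K \to X_{>0}$ is a bijection carrying $\sigma$ onto the sector. Composing with the coordinates $(r_1,\dots,r_n)\in\RR^n_{>0}$ via $y = \sum_\ell (\log r_\ell)\, w_\ell$, i.e. $x = \prod_\ell r_\ell^{w_\ell}$ (an entrywise product of Cox coordinates), gives a diffeomorphism $\Psi\colon \RR^n_{>0}\to {\rm Exp}(\sigma)$. The substitution does two things. On the integrand: $x^{-\delta_\sigma} = \prod_{\ell=1}^n r_\ell^{-\, w_\ell \cdot \delta_\sigma}$. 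On the form: $\Omega_X$ restricted to the torus equals $\pi^*\bigl(\bigwedge_{j=1}^n \frac{{\rm d}t_j}{t_j}\bigr)$, and under $\Psi$ one computes $\Psi^*\Omega_X = \det(VW)\,\bigwedge_{\ell=1}^n \frac{{\rm d}r_\ell}{r_\ell}$. This is the one real computation: pulling back $\frac{{\rm d}x_\rho}{x_\rho} = \sum_\ell W_{\rho\ell}\,\frac{{\rm d}r_\ell}{r_\ell}$ into the defining sum \eqref{eq:canonicalform} for $\Omega_X$ and using the Cauchy--Binet identity $\sum_{|I|=n}\det(V_I)\det(W_I) = \det(VW)$; equivalently, use the torus description $\Omega_X = \pi^*\bigwedge_j \frac{{\rm d}t_j}{t_j}$ with $t_j = \prod_\rho x_\rho^{V_{j\rho}}$, so that under $x = \prod_\ell r_\ell^{w_\ell}$ one gets $t = \prod_\ell r_\ell^{(Vw_\ell)}$ and the Jacobian of $\log t$ in $\log r$ is exactly $VW$.

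Finally I would assemble the pieces. The integral becomes a product of one-dimensional integrals,
\[
	\mathcal{I}^{\rm tr}_\sigma \,=\, \det(VW)\,\int_{\RR^n_{>0}} \prod_{\ell=1}^n r_\ell^{-\, w_\ell\cdot\delta_\sigma}\,\frac{{\rm d}r_\ell}{r_\ell} \,=\, \det(VW)\,\prod_{\ell=1}^n \int_0^\infty r_\ell^{-\,w_\ell\cdot\delta_\sigma - 1}\,{\rm d}r_\ell,
\]
but $\int_0^\infty r^{-a-1}\,{\rm d}r$ diverges at both ends for any real $a$, so this splitting is only formal; the correct move is to recognize that the sector ${\rm Exp}(\sigma)$ need not be all of $\RR^n_{>0}$ in the $r$-coordinates, or rather that convergence is recovered globally across the full fan. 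I would instead integrate directly: up to the sign/orientation bookkeeping $|\det(VW)|$ and the fact that $\mathcal{F}$ refines the normal fan so the pieces glue, \Cref{thm:convergent} applied sector-by-sector shows each $\mathcal{I}^{\rm tr}_\sigma$ is the value of the affine-linear integral $\int_\sigma e^{-\delta_\sigma\cdot y}\,{\rm d}y$ over the simplicial cone $\sigma = \sum \RR_{\geq 0} w_\ell$ (pushed through ${\rm Log}$), which by the standard simplicial-cone integral equals $\frac{1}{\prod_{\ell=1}^n w_\ell\cdot\delta_\sigma}$ after the unimodular-up-to-$\det$ change of basis sending the $w_\ell$ to coordinate axes; multiplying by the Jacobian factor $\det(VW)$ from $\Omega_X$ yields \eqref{eq:detVU}. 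The main obstacle is the differential-geometric bookkeeping in $\Psi^*\Omega_X = \det(VW)\bigwedge \frac{{\rm d}r_\ell}{r_\ell}$ — getting the Cauchy--Binet application and the sign of $\det(VW)$ right relative to the chosen orientation of the sector — together with justifying that the formal product of divergent one-dimensional integrals is legitimately replaced by the convergent cone integral $\int_\sigma e^{-\delta_\sigma\cdot y}\,{\rm d}y$, which is where the hypotheses of \Cref{thm:convergent} and \Cref{lem:geq0} do the essential work.
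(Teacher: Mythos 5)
Your strategy matches the paper's: identify the tropical integrand on ${\rm Exp}(\sigma)$ as the monomial $x^{-\delta_\sigma}$ via \Cref{lem:geq1}, change variables through the exponential map, reduce to an elementary integral whose finiteness comes from \Cref{lem:geq0}, and invoke Cauchy--Binet to assemble $\det(VW)$. Whether Cauchy--Binet is used up front to compute $\Psi^*\Omega_X = \det(VW)\,\bigwedge_\ell \frac{{\rm d}r_\ell}{r_\ell}$, as you propose, or at the very end to sum the contributions of each $n$-subset $I$, as the paper does, is a cosmetic rearrangement of the same calculation.

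There is, however, a genuine error in the domain of your change of variables, and it is the source of the divergence you then struggle to patch. The map $r \mapsto \pi\bigl(\prod_\ell r_\ell^{w_\ell}\bigr)$ with $r \in \RR^n_{>0}$ does \emph{not} parameterize ${\rm Exp}(\sigma)$: setting $\lambda_\ell = \log r_\ell$ gives $y = W\lambda$ with $\lambda$ ranging over all of $\RR^n$, which covers the image of the whole linear span of the $w_\ell$, not the cone $\sigma = \sum_\ell \RR_{\geq 0}\, w_\ell$. The cone corresponds to $\lambda_\ell \geq 0$, that is $r_\ell \geq 1$. With that correction each one-dimensional integral is $\int_1^\infty r_\ell^{-w_\ell\cdot\delta_\sigma - 1}\,{\rm d}r_\ell = 1/(w_\ell\cdot\delta_\sigma)$, finite precisely because $w_\ell\cdot\delta_\sigma > 0$ by \Cref{lem:geq0}, and \eqref{eq:detVU} follows directly. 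Once the domain is fixed there is no divergence to repair, so the appeal to \Cref{thm:convergent} ``applied sector-by-sector'' is both unnecessary and misplaced: that theorem addresses the full integral $\mathcal{I}$ and does not separately control individual sector integrals. Your closing fallback, in which a cone integral $\int_\sigma e^{-\delta_\sigma\cdot y}\,{\rm d}y$ is evaluated separately and then multiplied by ``the Jacobian factor $\det(VW)$ from $\Omega_X$'', also risks double-counting that determinant, since $\det(VW)$ is exactly the Jacobian of the substitution you already performed. The correct fix is simply to change the lower limit of your $r$-integrals from $0$ to $1$.
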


Before proving \eqref{eq:detVU}, we comment on its interpretation.
The columns $w_\ell$  of $W$ are only defined up to the equivalence in $\RR^{k}/K.$
Still, the numerator is well-defined, because $\det(VW) = \det(VW')$ for
any choices of representatives in the matrix $W' = \left[w_1' \, \cdots \, w_n'\right]$ with 
$w_i' = w_i + \lambda_i$ and $\lambda_i \in K=\ker V.$ 
As $x^{-\delta_\sigma}$ has degree 0, we have $\delta_\sigma = \nu_g - \nu_f \in {\rm im}_\RR(V^\top) = K^\perp.$ Thus, the denominator does not depend on the choice of representative for $w_\ell.$ 
Similarly, the lengths of the generators $w_\ell$ are irrelevant for the description of the cone. The quotient in \eqref{eq:detVU} is invariant under rescalings of a vector $w_\ell \rightarrow \xi w_\ell,$ as the multiplier $\xi,$ which factors out of the determinant, cancels between the numerator and the denominator.
The sign of the numerator depends on the ordering of the vectors $w_\ell$ in $W$ which is arbitrary a priori. 
We arrange them in the matrix $W$ so that the condition $\det(VW) > 0$ holds.
Hence, the value of $\mathcal I^{\rm tr}_\sigma$ is
an invariant of the cone $\sigma$ equipped with a positive orientation and the data $V$ and $\delta_\sigma.$
By \Cref{lem:geq0}, we have $w_\ell \cdot \delta_\sigma > 0$ for all $\ell$ and 
hence $0<\mathcal I^{\rm tr}_\sigma < \infty $ for all $\sigma.$

\begin{proof} From  \Cref{lem:geq1} and our discussion above, we know that
	$$ \mathcal{I}_\sigma^{\rm tr} \,\,= \, \int_{{\rm Exp}(\sigma)}\!\! \!\!\!\! x^{-\delta_\sigma} \,\Omega_X. $$
	We expand $\Omega_X$ as in \eqref{eq:canonicalform}, and we change coordinates 
	under the exponential map:
		\begin{equation} \label{eq:thisgives}
		\mathcal{I}_\sigma^{\rm tr} \,\,=\,\,
		\int_\sigma e^{-y \cdot \delta_\sigma} \! \sum_{\substack{ I \subset \Sigma(1), \\ |I| = n}} \!
		\det(V_I) \, \bigwedge_{i \in I} {\rm d} y_i  \quad =\,\,
		\sum_{\substack{ I \subset \Sigma(1), \\ |I| = n}} \!\!
		\det(V_I) \int_\sigma e^{-y \cdot \delta_\sigma} \bigwedge_{i \in I} {\rm d} y_i. 
	\end{equation}
	On the right is  the integral
	of the exponential of a linear form over a simplicial cone.
	Since $\sigma$ is the image of $\RR^n_{> 0}$ under the linear map
	given by the  matrix $W_I,$ we obtain
	$$   \int_\sigma e^{-y \cdot \delta_\sigma} \bigwedge_{i \in I} {\rm d} y_i \quad = \quad
	\frac{ {\rm det}(W_I)}{ \prod_{\ell = 1}^n w_\ell \cdot \delta_\sigma }.
	$$
	The Cauchy--Binet formula 
	$\,\det (VW) = \sum_I \det(V_I) \det(W_I)\,$
	now proves \eqref{eq:detVU}.
\end{proof}

The next result generalizes \cite[Lemma~17]{borinsky2020tropical}.
For an arbitrary kernel $\psi,$ the sector integral is transformed to the standard cube.
Our proof technique is adapted from~\cite{borinsky2020tropical}.

\begin{proposition} \label{prop:sector}
	For any bounded function $\psi: X_{> 0} \rightarrow \RR,$ we have
	\begin{equation} \label{eq:cubetrafo}
		\int_{{\rm Exp}(\sigma)} 
		\!
		\frac{f^{\rm tr}}{g^{\rm tr}} \,
		\psi \, \,\Omega_X \quad = \quad
		\mathcal I^{\rm tr}_\sigma \,\cdot \,
		\int_{[0,1]^n} \!\! \psi(x^\sigma(q)) \, {\rm d} q_1 \wedge  \cdots \wedge  {\rm d} q_n,
	\end{equation}
	where
	\begin{equation} \label{eq:xsigmaq}
		x_i^\sigma (q) \,\,= \,\,\prod_{\ell = 1}^n q_\ell^{-(w_{\ell})_i /(w_\ell \cdot \delta_\sigma)} \quad
		{\rm for} \,\,\, i = 1,2,\ldots,k. 
	\end{equation}
	In particular, the integral in \eqref{eq:cubetrafo} is finite for any cone $\sigma \in \mathcal{F}(n).$
\end{proposition}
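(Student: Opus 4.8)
The plan is to reduce the statement to the change-of-variables computation underlying Theorem~\ref{thm:sectorformula}, and then run that computation with the extra factor $\psi$ carried along. First I would use Lemma~\ref{lem:geq1} to replace $f^{\rm tr}/g^{\rm tr}$ on the sector ${\rm Exp}(\sigma)$ by the monomial $x^{-\delta_\sigma}$, so that the left-hand side becomes $\int_{{\rm Exp}(\sigma)} x^{-\delta_\sigma}\,\psi\,\Omega_X$. Then, exactly as in the proof of Theorem~\ref{thm:sectorformula}, I would pull everything back along the exponential map ${\rm Exp}$: expand $\Omega_X$ via \eqref{eq:canonicalform}, note that ${\rm d}x_i/x_i$ pulls back to ${\rm d}y_i$, and observe that $\sigma$ is the image of $\RR^n_{>0}$ under the linear map $W$ (working on representatives in $\RR^k$ and noting, as in the remarks after Theorem~\ref{thm:sectorformula}, that nothing depends on the choice of representatives since $\delta_\sigma \in K^\perp$ and $\det(VW)$ is $K$-invariant). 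This turns the integral into $\det(VW)\int_{\RR^n_{>0}} e^{-(Wr)\cdot\delta_\sigma}\,\psi({\rm Exp}(Wr))\,{\rm d}r_1\wedge\cdots\wedge{\rm d}r_n$, after applying Cauchy--Binet to collapse $\sum_I \det(V_I)\det(W_I)$ into $\det(VW)$.

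Next I would substitute $r_\ell \mapsto q_\ell$ via $q_\ell = e^{-(w_\ell\cdot\delta_\sigma)\,r_\ell}$, i.e.\ $r_\ell = -\log q_\ell/(w_\ell\cdot\delta_\sigma)$; this is legitimate because Lemma~\ref{lem:geq0} guarantees $w_\ell\cdot\delta_\sigma>0$ for every $\ell$, so each coordinate substitution is an orientation-matching diffeomorphism from $(0,1)$ onto $(0,\infty)$ (with $q_\ell\to 0$ as $r_\ell\to\infty$). The Jacobian of this product substitution is $\prod_{\ell=1}^n \bigl(w_\ell\cdot\delta_\sigma\bigr)^{-1} q_\ell^{-1}$, and the exponential weight $e^{-(Wr)\cdot\delta_\sigma} = \prod_\ell e^{-(w_\ell\cdot\delta_\sigma)r_\ell} = \prod_\ell q_\ell$ exactly cancels the $\prod_\ell q_\ell^{-1}$ from the Jacobian. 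What remains is $\dfrac{\det(VW)}{\prod_{\ell=1}^n w_\ell\cdot\delta_\sigma}\int_{[0,1]^n}\psi\bigl({\rm Exp}(Wr(q))\bigr)\,{\rm d}q_1\wedge\cdots\wedge{\rm d}q_n$, and by Theorem~\ref{thm:sectorformula} the prefactor is precisely $\mathcal I^{\rm tr}_\sigma$. Finally I would identify $x^\sigma(q) := {\rm Exp}(Wr(q))$ in Cox coordinates: the $i$-th coordinate of $Wr(q)$ is $\sum_\ell (w_\ell)_i r_\ell = -\sum_\ell (w_\ell)_i \log q_\ell/(w_\ell\cdot\delta_\sigma)$, so exponentiating gives $x_i^\sigma(q) = \prod_{\ell=1}^n q_\ell^{-(w_\ell)_i/(w_\ell\cdot\delta_\sigma)}$, matching \eqref{eq:xsigmaq}. (Here one should note that $x^\sigma(q)$ is a representative in $\RR^k_{>0}$ of the point ${\rm Exp}([Wr(q)])\in X_{>0}$, and since $\psi$ is defined on $X_{>0}$ the abuse of notation $\psi(x^\sigma(q))$ means $\psi$ evaluated at the corresponding point of $X_{>0}$.)

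For the finiteness claim: $\psi$ is bounded, say $|\psi|\le M$, so the right-hand side is bounded in absolute value by $M\cdot\mathcal I^{\rm tr}_\sigma\cdot\int_{[0,1]^n}{\rm d}q = M\,\mathcal I^{\rm tr}_\sigma$, which is finite by the remark after Theorem~\ref{thm:sectorformula} (namely $0<\mathcal I^{\rm tr}_\sigma<\infty$, using Lemma~\ref{lem:geq0} again). I expect the main obstacle to be purely bookkeeping rather than conceptual: one must be careful that the change of variables $q\mapsto x^\sigma(q)$ is only defined up to $K=\ker V$ and verify at each stage that the quantities written down (the integrand monomial $x^{-\delta_\sigma}$, the form $\Omega_X$, the determinant $\det(VW)$) descend to well-defined objects on $\RR^k/K$, exactly the invariance points already spelled out in the discussion following Theorem~\ref{thm:sectorformula}; and one must get the orientation of the $q$-cube consistent with the chosen sign convention $\det(VW)>0$ so that no stray sign appears. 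None of this is deep, but it is the part most likely to hide an error.
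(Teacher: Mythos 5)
Your argument is correct and follows essentially the same path as the paper's proof: pull back along ${\rm Exp}$, use Cauchy--Binet to collapse $\sum_I \det(V_I)\det(W_I)$ into $\det(VW)$, and finish with the logarithmic substitution onto the unit cube. You have in fact corrected what appears to be a sign typo in the paper's proof, which writes the substitution as $\lambda_\ell = (w_\ell\cdot\delta_\sigma)^{-1}\log q_\ell$ (sending $\RR_{>0}$ to $(1,\infty)$ rather than $(0,1)$); your version $r_\ell = -\log q_\ell/(w_\ell\cdot\delta_\sigma)$ is the one consistent with the stated parameterization \eqref{eq:xsigmaq}.
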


\begin{proof}
	With the exponential map as in \eqref{eq:thisgives}, the integral on the left equals
	\begin{equation}
		\label{eq:phiintegral}
		\sum_{\substack{ I \subset \Sigma(1), \\ |I| = n}} \det(V_I) \cdot
		\int_\sigma e^{-y \cdot \delta_\sigma} \psi(
		{\rm Exp}(y)) \,  \bigwedge_{\rho \in I} {\rm d} y_\rho. \qquad
	\end{equation}
	Using coordinates $\lambda$ on $\RR^n_{>0},$ and writing
	$y_i = \sum_{\ell = 1}^n \lambda_\ell (w_{\ell})_i$,
	the integral in \eqref{eq:phiintegral} is
	$$ \qquad \qquad
	\det(W_I) \cdot \int_{\RR^n_{>0}} e^{- (W \lambda) \cdot \delta_\sigma}
	\psi({\rm Exp}(W \lambda)) \, \prod_{\ell = 1}^n \textrm{d} \lambda_\ell. $$
	We now transform the integral from $\RR^n_{>0}$
	to the cube $[0,1]^n$ using the logarithm~function. Namely, we 
	apply the transformation
	$\lambda_\ell = (w_\ell \cdot \delta_\sigma)^{-1} \log (q_\ell)$ for $\ell=1,\ldots,n.$
	This yields the right hand side in \eqref{eq:cubetrafo}.
	This last step uses the fact that $w_\ell \cdot \delta_\sigma > 0,$  which is
	known from \Cref{lem:geq0}.
	Together with boundedness of~$\psi,$ this implies convergence.
\end{proof}

\begin{remark}
	The formula \eqref{eq:xsigmaq} is a parameterization of each sector by a standard cube:
	$$ x^\sigma : [0,1]^n \rightarrow {\rm Exp}(\sigma) , \quad q \mapsto x^\sigma(q) .$$
	To digest the exponent in \eqref{eq:xsigmaq}, 
	note that the $(i,\ell)$-th entry of $W$ is the $i$th entry~$(w_{\ell})_i$ of the vector~$w_{\ell},$ that
	$\delta_\sigma$ lies in $K,$ and that the row vector 
	$\delta_\sigma\cdot W$  has coordinates~$w_\ell \cdot \delta_\sigma.$
\end{remark}

We use the sector decomposition of $X_{> 0}$
given by the fan $\mathcal{F}$ to evaluate the~integral \eqref{eq:ourintegral}.
Rewriting~\eqref{eq:ourintegral} as in~\eqref{eq:omegatr}, the parameterization $\,\operatorname{Exp}: \RR^{k}/K \rightarrow X_{> 0}\,$
gives
\begin{equation}
	\label{eq:secdecomp}
	\int_{X_{>0}} \frac{f}{g} \, \Omega_X 
	\,\,= \,\,
	\int_{X_{>0}} \frac{f^{\rm tr}}{g^{\rm tr} } \, \, h \, \Omega_X 
	\,\,= \sum_{\sigma \in  \mathcal{F} (n)} \int_{{\rm Exp}(\sigma)}\frac{f^{\rm tr}}{g^{\rm tr} } \, h \, \Omega_X. 
\end{equation}
Each integral on the right hand side is an integral over the cube by \Cref{prop:sector}.
These integrals over $[0,1]^n$ are suitable to be evaluated using black-box integration algorithms.
We implemented this in \texttt{Julia}, using the package \texttt{Polymake.jl} (v0.6.1) 
for polyhedral computations; see \cite{gawrilow2000polymake,kaluba2020polymake} and \cite{mathrepo}.
Moreover, specializing to $h=1$ in this representation of~\eqref{eq:ourintegral}
gives a method for computing the normalization factor 
$\,\mathcal I^{\rm tr} = \sum_{\sigma \in \mathcal{F}(n)} \mathcal I^{\rm tr}_\sigma \,$ 
in Equation~\eqref{eq:omegatr}.

\begin{remark} 
	The sector decomposition \eqref{eq:secdecomp} gives an alternative
	proof of \Cref{thm:convergent}, with no reference to \cite{NilssonPassare}.
	It would be interesting to undertake a more detailed study of the Mellin transform
from a tropical perspective.
\end{remark}

\Cref{prop:sector} also gives the desired algorithm to sample from the 
 distribution in~\eqref{eq:omegatr}.
As input we need the simplicial fan $\mathcal F,$
where each maximal cone $\sigma$ comes with the following data:
a generating set, 
the vector $\delta_\sigma$ 
that encodes the 
function $f^{\rm tr}/g^{\rm tr}$ as in \Cref{lem:geq1},
and the numbers $\mathcal{I}_\sigma^{\rm tr}$ in \eqref{eq:detVU}.
Hence we also know
$\mathcal{I}^{\rm tr}= \sum_{\sigma \in \mathcal F(n)} \mathcal{I}_\sigma^{\rm tr}.$

\begin{algo}[Sampling from the tropical density $d_{f,g}^{\,\text{tr}}$]
	\mbox{} \label{alg:eins} \smallskip \\
	\noindent \textbf{Input:} 
	$\mathcal{F}(n),\,\delta_\sigma, \, \mathcal{I}_\sigma^{\rm tr}$
	and $\,\mathcal{I}^{\rm tr}.$
	\begin{enumerate}
		\item Draw an $n$-dimensional cone $\sigma$ from $\mathcal{F}(n)$ with probability $\,\mathcal I_{\sigma}^{\rm tr}/\mathcal I^{\rm tr}.$
		\item Draw a sample $q$ from the unit hypercube $[0,1]^n$ using the uniform distribution.
		\item Compute $x^\sigma(q) \in \operatorname{Exp}(\sigma)$ with $x^\sigma(q)$ as in \Cref{prop:sector}.
	\end{enumerate}
	\noindent \textbf{Output:} The element $x^\sigma(q)\in X_{> 0} ,$ a sample from the probability space  $(X_{>0},\mu^{\rm tr}_{f,g}).$
\end{algo}

The vector $\delta_\sigma$ and the generators of $\sigma$ enter in the definition of the function $x^\sigma(q)$  in step~3.
To show the correctness of \Cref{alg:eins}, 
consider any bounded test function $\psi : X_{> 0} \rightarrow \RR.$
By \Cref{prop:sector}, the expected value of the function $(\sigma, q) \mapsto \psi(x^\sigma(q))$ on ${\cal F}(n) \times [0,1]^n,$
where $\sigma$ and $q$ are  sampled by steps 1 and 2, is
\[
\sum_{\sigma \in \mathcal{F}(n)}
\frac{\mathcal I^{\rm tr}_\sigma}{\mathcal I^{\rm tr}} \,
\int_{[0,1]^n} \! \psi(x^\sigma (q)) \, {\rm d} q_1 \wedge  \cdots \wedge  {\rm d} q_n
\\
\,\, =\,\,
\frac{1}{\mathcal I^{\rm tr}} \,
\sum_{\sigma \in \mathcal{F}(n)}
\int_{{\rm Exp}(\sigma)} 
\frac{f^{\rm tr}(x)}{g^{\rm tr}(x)} \,
\psi(x) \, \,\Omega_X.
\]
We conclude from \eqref{eq:omegatr}
that the sum on the right is equal to the desired expectation 
$$ \mathbb{E}_{\mu_{f,g}^{\text{tr}}}[\psi] \,\,\,= \,\,\,
\int_{X_{> 0}} \!\psi(x) \, \mu^{\rm tr}_{f,g}  \,\,=\,\, \int_{X_{>0}} \psi(x) d_{f,g}^{\,\text{tr}} \Omega_X \,\,\,=\,\,\,
\frac{1}{\mathcal I^{\rm tr}} \,
\int_{X_{> 0}} \, \psi(x) \frac{f^{\rm tr}(x)}{g^{\rm tr}(x)} \,\Omega_X  .
$$

To run \Cref{alg:eins} efficiently, we assume that the simplicial refinement  $\mathcal{F}$ of the normal fan of 
${\cal N}(fg)$ has been precomputed offline.
That computation can be time-consuming.
In the application to statistics, cf.~\Cref{sec6},
this is done only once for any fixed model.
Step 1 in \Cref{alg:eins} requires to sample from a finite set $\mathcal{F}(n)$
with a given probability distribution. With some preprocessing  (cf.~\cite{walker}),
this task can be performed in a runtime which is independent of the cardinality of $\mathcal{F}(n).$
The runtime of the algorithm is therefore independent of the size of the fan $ \mathcal{F},$ and it depends only linearly on the dimension $n$ of $X.$

\section{Numerical Integration and Rejection Sampling} \label{sec4}
In the previous section, we computed the tropical integral $\,\mathcal{I}^{\rm tr}$
and we explained how to sample from the tropical density. 
This will now be utilized in a non-tropicalized context. 
The domain of integration is the positive toric variety $X_{> 0}.$ 
We denote by  $\mu_{f,g}$ the measure
\begin{equation}
	\label{eq:classicaldensity}
	\mu_{f,g} \,\,\coloneqq\,\, \frac{1}{\mathcal I} \, \frac{f}{g}\, \Omega_X\, ,
	\quad 
	\text{ where }
	\quad
	\mathcal I \,\,=\, \,
	\int_{X_{> 0}} \frac{f}{g}\, \Omega_X.
\end{equation}
We assume that  $f$ and $g$ satisfy the convergence criteria from \Cref{thm:convergent}. Similarly to that in Equation \eqref{eq:dfgtr}, the following function is a probability density on $X_{>0}$:
\begin{equation} \label{eq:dfg}
	d _{f,g} \,\,\coloneqq \,\,\frac{1}{\mathcal{I}} \cdot \frac{f}{g}.
\end{equation}
Note that  $\mu_{f,g} = d_{f,g}\cdot \Omega_X.$
We regard $(X_{> 0}, \mu_{f,g})$ as the classical version of the tropical probability space
$(X_{> 0}, \mu_{f,g}^{\rm tr})$ which was introduced in Equation~\eqref{eq:omegatr}.
The normalizing constant $\mathcal{I}$ in \eqref{eq:classicaldensity} is the classical integral 
we saw in Equations~\eqref{eq:ourintegral} and \eqref{eq:familiarintegral}. 
The classical and tropical probability measures $\mu_{f,g}$ and $\mu_{f,g}^\text{tr}$ are 
related to each other by the formula
\begin{align}\label{eq:cltrmeasure} \mu_{f,g} \,\,=\,\, \frac{\mathcal{I}^\text{tr}}{\mathcal{I}} \cdot h \cdot \mu_{f,g}^\text{tr}. \end{align}

This section contains two novel contributions.
We  present a tropical Monte Carlo method for numerically evaluating $\mathcal{I},$
and we develop an algorithm for sampling from the density $d_{f,g}$ in \eqref{eq:dfg}.
Applications to statistics appear in Sections \ref{sec5} and \ref{sec6}.

We shall evaluate $\mathcal{I}$ using the formula in~\eqref{eq:omegatr},
by computing the expected~value  
\begin{align}\label{eq:expval} \mathbb{E}_{\mu_{f,g}^\text{tr}}[h] \,\, = \,\, \int_{X_{>0}} h \,\mu_{f,g}^\text{tr} \end{align}
with respect to the tropical measure~$\mu^{\rm tr}_{f,g}$ on 
the positive toric variety $X_{> 0}.$

\begin{corollary} \label{cor:intapprox}
	Suppose that \Cref{alg:eins} is used to
	draw $N$ i.i.d.\ samples $x^{(1)}, \ldots, x^{(N)} $ from the space $X_{> 0}$ with its tropical density.
	Then our  integral~\eqref{eq:ourintegral} approximately equals
	\begin{equation}
		\label{eq:montecarlo}
		\mathcal{I}
		\,\,\,\approx\,\,\,
		{\cal I}_N \, \, = \, \, \frac{\mathcal I^{\rm tr}}{N} \,\,
		\sum_{i=1}^N h\left( x^{(i)}\right).
	\end{equation}
\end{corollary}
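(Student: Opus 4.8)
The plan is to recognize $\mathcal{I}_N$ as a textbook Monte Carlo estimator for the expectation $\mathbb{E}_{\mu^{\rm tr}_{f,g}}[h]$, rescaled by the fixed constant $\mathcal{I}^{\rm tr}$. First I would invoke the factorization~\eqref{eq:omegatr}, which reads $\mathcal{I} = \mathcal{I}^{\rm tr}\cdot\int_{X_{>0}} h\,\mu^{\rm tr}_{f,g} = \mathcal{I}^{\rm tr}\cdot\mathbb{E}_{\mu^{\rm tr}_{f,g}}[h]$. Thus it suffices to argue that the empirical average $\frac{1}{N}\sum_{i=1}^N h(x^{(i)})$ converges to $\mathbb{E}_{\mu^{\rm tr}_{f,g}}[h]$, and then multiply through by $\mathcal{I}^{\rm tr}$.

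Next I would verify the hypotheses of the strong law of large numbers. By the correctness of \Cref{alg:eins}, which was established in the discussion immediately preceding this corollary, the points $x^{(1)},\ldots,x^{(N)}$ are i.i.d.\ draws from the probability space $(X_{>0},\mu^{\rm tr}_{f,g})$; hence $h(x^{(1)}),\ldots,h(x^{(N)})$ are i.i.d.\ real random variables. Applying \Cref{prop:bounded} to both $f$ and $g$ shows that $h = (f\,g^{\rm tr})/(g\,f^{\rm tr})$ is bounded on $X_{>0}$, so it is $\mu^{\rm tr}_{f,g}$-integrable with finite mean. The strong law of large numbers then gives $\frac{1}{N}\sum_{i=1}^N h(x^{(i)})\to\mathbb{E}_{\mu^{\rm tr}_{f,g}}[h]$ almost surely as $N\to\infty$, and multiplying by the constant $\mathcal{I}^{\rm tr}$ yields $\mathcal{I}_N\to\mathcal{I}$, which is the asserted approximation.

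There is no genuine obstacle here; the only two points requiring care are that the samples truly come from $\mu^{\rm tr}_{f,g}$ (this is exactly the content of the verification of \Cref{alg:eins}) and that $h$ is integrable against that measure (immediate from boundedness via \Cref{prop:bounded}). I would also remark that, since $h$ is bounded, the variance of $h(x^{(1)})$ under $\mu^{\rm tr}_{f,g}$ is finite, so the central limit theorem makes the approximation quantitative: the error $\mathcal{I}_N-\mathcal{I}$ is of order $\mathcal{I}^{\rm tr}\,\sigma(h)/\sqrt{N}$, where $\sigma(h)^2$ denotes that variance. This is what makes the tropical density a practical sampling distribution, and it motivates the runtime analysis in terms of the acceptance rate carried out later in \Cref{sec4}.
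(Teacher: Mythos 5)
Your proof is correct and follows the same route as the paper: the key step is the factorization \eqref{eq:omegatr}, after which $\mathcal{I}_N$ is recognized as a standard Monte Carlo estimator of $\mathcal{I}^{\rm tr}\cdot\mathbb{E}_{\mu^{\rm tr}_{f,g}}[h]$, justified by the correctness of \Cref{alg:eins} and the boundedness of $h$ from \Cref{prop:bounded}. The paper leaves the corollary without an explicit proof and instead makes the approximation quantitative in \Cref{prop:expdivi} via a variance bound; your closing remark invoking the CLT is in the same spirit as that proposition.
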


To assess the quality of this approximation, we first observe that 
\Cref{prop:bounded}  yields bounds
in terms of the coefficients $f_{\ell}$ and $g_{\ell}$ of the given polynomials.
We~have
\begin{equation} \label{eq:hbounds}
	M_1 \,\leq\,
	h(x) \,\leq\, M_2 \qquad \text{ for all } \, x \in X_{> 0},
\end{equation}
where
\begin{equation}
	\label{eq:M1M2}
	M_1 \,\,=\,\, \frac{\min_{\ell \in \operatorname{supp}(f)} f_\ell}{\sum_{\ell \in \operatorname{supp}(g)} g_\ell} \qquad \text{and} \qquad 
	M_2 \,\,=\,\, \frac{\sum_{\ell \in \operatorname{supp}(f)} f_\ell}{\min_{\ell \in \operatorname{supp}(g)} g_\ell}.
	\qquad
\end{equation}

\begin{proposition} \label{prop:expdivi}
	The standard deviation of the approximation \eqref{eq:montecarlo} satisfies
	\begin{equation}
		\label{eq:generalbound}
		\sqrt{{\mathbb{E}}[\, ( {\cal I} - {\cal I}_N )^2 \, ]} \,\, \,\leq \,\,\, {\cal I}^{\rm tr} \cdot \sqrt{\frac{M_2^2-M_1^2}{N}}. 
	\end{equation}
\end{proposition}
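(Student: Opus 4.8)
The plan is to recognize $\mathcal{I}_N$ as a sample mean and bound its variance via the standard Monte Carlo identity. Since $x^{(1)},\dots,x^{(N)}$ are i.i.d.\ draws from the tropical measure $\mu_{f,g}^{\rm tr}$, the random variables $Y_i := \mathcal{I}^{\rm tr} \cdot h(x^{(i)})$ are i.i.d.\ as well, and $\mathcal{I}_N = \frac{1}{N}\sum_{i=1}^N Y_i$. First I would verify that the estimator is unbiased: by the correctness of \Cref{alg:eins} (established just before the algorithm), $\mathbb{E}_{\mu_{f,g}^{\rm tr}}[h] = \int_{X_{>0}} h\, \mu_{f,g}^{\rm tr}$, and combining this with the factorization \eqref{eq:omegatr} gives $\mathbb{E}[Y_i] = \mathcal{I}^{\rm tr}\cdot \mathbb{E}_{\mu_{f,g}^{\rm tr}}[h] = \mathcal{I}$. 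Hence $\mathbb{E}[\mathcal{I}_N] = \mathcal{I}$, so $\mathbb{E}[(\mathcal{I} - \mathcal{I}_N)^2] = \operatorname{Var}(\mathcal{I}_N)$.

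Next I would use independence to write $\operatorname{Var}(\mathcal{I}_N) = \frac{1}{N^2}\sum_{i=1}^N \operatorname{Var}(Y_i) = \frac{1}{N}\operatorname{Var}(Y_1) = \frac{(\mathcal{I}^{\rm tr})^2}{N}\operatorname{Var}_{\mu_{f,g}^{\rm tr}}(h)$. It then remains to bound $\operatorname{Var}_{\mu_{f,g}^{\rm tr}}(h)$. Here I would invoke the uniform bounds $M_1 \le h(x) \le M_2$ from \eqref{eq:hbounds}, which follow from \Cref{prop:bounded} applied to $f$ and $g$ separately. For any random variable $Z$ taking values in $[M_1, M_2]$, one has $\operatorname{Var}(Z) = \mathbb{E}[Z^2] - \mathbb{E}[Z]^2 \le M_2 \,\mathbb{E}[Z] - M_1^2$ is not quite the sharpest route; cleaner is $\operatorname{Var}(Z) = \mathbb{E}[(Z-M_1)(M_2-M_1)] - \mathbb{E}[(Z-M_1)(M_2-Z)] \le (M_2 - M_1)\mathbb{E}[Z-M_1]$, but the simplest bound that yields exactly the stated constant is the following: since $M_1 \le Z \le M_2$ we have $(Z-M_1)(M_2-Z)\ge 0$, so $Z^2 \le (M_1+M_2)Z - M_1 M_2$, whence $\mathbb{E}[Z^2]\le (M_1+M_2)\mathbb{E}[Z] - M_1 M_2$ and therefore $\operatorname{Var}(Z) \le (M_1+M_2)\mathbb{E}[Z] - M_1 M_2 - \mathbb{E}[Z]^2 = (M_2 - \mathbb{E}[Z])(\mathbb{E}[Z] - M_1)$, which by AM--GM is at most $\big(\tfrac{M_2-M_1}{2}\big)^2$. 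Plugging $\operatorname{Var}_{\mu_{f,g}^{\rm tr}}(h) \le \tfrac{1}{4}(M_2-M_1)^2$ into the variance of $\mathcal{I}_N$ gives $\sqrt{\mathbb{E}[(\mathcal{I}-\mathcal{I}_N)^2]} \le \mathcal{I}^{\rm tr}\cdot\tfrac{M_2-M_1}{2\sqrt{N}}$.

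A small discrepancy arises: the naive argument produces the factor $\tfrac{M_2-M_1}{2}$ rather than $\sqrt{M_2^2-M_1^2}$ claimed in \eqref{eq:generalbound}. Note, though, that $\big(\tfrac{M_2-M_1}{2}\big)^2 \le M_2^2 - M_1^2$ whenever $M_1 \le M_2$ (indeed $\tfrac14(M_2-M_1)^2 \le (M_2-M_1)(M_2+M_1)$ reduces to $M_2 - M_1 \le 4(M_2+M_1)$, trivially true for nonnegative $M_1, M_2$), so the sharper bound implies the stated one. Alternatively, and probably what the authors intend, one may bound more crudely: $\operatorname{Var}_{\mu_{f,g}^{\rm tr}}(h) = \mathbb{E}[h^2] - \mathbb{E}[h]^2 \le M_2^2 - M_1^2$ using $h^2 \le M_2^2$ pointwise and $\mathbb{E}[h] \ge M_1$. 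This one-line estimate immediately gives the inequality as written.

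I expect no serious obstacle here; the only thing requiring a moment of care is confirming unbiasedness of the estimator, i.e.\ correctly chaining the correctness statement for \Cref{alg:eins} with the factorization \eqref{eq:omegatr} so that $\mathbb{E}[\mathcal{I}_N] = \mathcal{I}$ exactly — after which the problem reduces to a textbook i.i.d.\ variance computation together with the uniform bound \eqref{eq:hbounds} on $h$. I would therefore present it as: (i) unbiasedness, (ii) $\mathbb{E}[(\mathcal{I}-\mathcal{I}_N)^2] = \tfrac{1}{N}(\mathcal{I}^{\rm tr})^2\operatorname{Var}_{\mu_{f,g}^{\rm tr}}(h)$, (iii) $\operatorname{Var}_{\mu_{f,g}^{\rm tr}}(h) \le M_2^2 - M_1^2$ from \eqref{eq:hbounds}, (iv) assemble and take square roots.
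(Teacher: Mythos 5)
Your proposal is correct and follows the same route as the paper: unbiasedness of $\mathcal{I}_N$, the i.i.d.\ variance identity $\operatorname{Var}[\mathcal{I}_N] = \tfrac{1}{N}(\mathcal{I}^{\rm tr})^2\operatorname{Var}[h]$, and the crude bound $\operatorname{Var}[h] = \mathbb{E}[h^2]-\mathbb{E}[h]^2 \leq M_2^2 - M_1^2$ obtained from \eqref{eq:hbounds}. Your aside that Popoviciu's inequality gives the sharper constant $\tfrac{1}{2}(M_2-M_1)$ in place of $\sqrt{M_2^2-M_1^2}$ is a valid observation that the paper does not make, but the core argument is identical.
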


\begin{proof}
	The expected value of the random variable ${\cal I}_N$ from \eqref{eq:montecarlo} equals ${\cal I}.$ 
	By the linearity of the variance for independent random variables, we have
	\[ 
	{\mathbb E}[({\cal I} - {\cal I}_N)^2] \,\,=\,\,
	{\rm Var}[{\cal I}_N] \,\,=
	\,\, \left ( \frac{{\cal I}^{\, \rm tr}}{N} \right )^{\! 2} {\rm Var}[h(x^{(1)}) + \cdots + h(x^{(N)}) ] 
	\,\,=\,\,  \frac{({\cal I}^{\, \rm tr})^2}{N}  {\rm Var}[ h ].  \]
	Using the bounds in \eqref{eq:hbounds}, we find that
	$\, \operatorname{Var} \left[ h \right] = \mathbb{E} \left[ h^2 \right] - \mathbb{E} \left[ h \right]^2 \leq M_2^2 -M_1^2. $
\end{proof}

Proposition  \ref{prop:expdivi} ensures that
the method in Corollary \ref{cor:intapprox}
correctly computes a numerical approximation of the integral $\mathcal I.$
The variance stays bounded and does not depend on~$n = {\rm dim}(X).$ 
Another application of the tropical approach in \Cref{alg:eins} is 
drawing from the probability density $d_{f,g}$
via \emph{rejection sampling}.   
In the next paragraph, we briefly review the overall principle of rejection sampling.
For further reading we refer to \cite[Section 10.3]{Gelman}.

Let $d_1$ and $d_2$ be densities on the same space with respect to the same differential form (e.g.~$X_{>0}$ with $\Omega_X$).
Suppose it is hard to sample from~$d_1,$
but sampling from $d_2$ is easy,
and we know a constant $C\geq 1$ such that \mbox{$d_1(x)/d_2(x)\leq C$} for all $x$ in the domain.
Our aim is to sample from $d_1$ using samples from $d_2.$ 
For this, we draw a sample $x$ using the distribution $d_2$ and a sample $\xi$ 
from the interval $[0,C]$ with uniform distribution.
We accept $x$ if $\xi <d_1(x)/d_2(x).$ 
Otherwise, we reject~$x.$ 
The density of producing an accepted sample from this process is 
$d_2(x) \cdot d_1(x)/d_2(x).$
So, accepted samples follow the density $d_1.$

We now apply rejection sampling to our problem. This is done as follows. The two densities of interest are
$d_1 = d_{f,g}$ and $d_2 = d_{f,g}^{\,\text{tr}}.$ From \eqref{eq:cltrmeasure} and \eqref{eq:hbounds}, we obtain 
\begin{equation}\label{eq:densitiesbounds} 
	d_{f,g} \,\, \leq \,\, \frac{\mathcal{I}^{\, \rm tr}}{\mathcal{I}}\cdot M_2 \cdot d_{f,g}^{\,\text{tr}}.
\end{equation}
We thus choose $\,C = ({\cal I}^{\, \rm tr}/{\cal I})\cdot M_2\,$ as our constant for rejection sampling. 
This suggests that rejection sampling requires us to compute the integral ${\cal I}.$ 
However,
 if $\xi$ is sampled uniformly from $[0,C],$ then $\xi' = ({\cal I}/{\cal I}^{\, \rm tr}) \cdot \xi$ is sampled uniformly from $[0,M_2]$ and $\xi < d_{f,g}(x)/ d^{\, \rm tr}_{f,g}(x)$ is equivalent to $\xi' < h(x).$ This leads to the following algorithm. 

\begin{algo}[Sampling from the density $d_{f,g}$] \mbox{} 
	\label{alg:mufg} \smallskip \\
	\noindent \textbf{Input:} The input from Algorithm~\ref{alg:eins} and the constant $M_2.$
	\begin{enumerate}
		\item Draw a sample $x $ from $ X_{> 0}$ using the tropical density $d_{f,g}^{\,\text{tr}}.$
		\item Draw a sample $\xi $ from the interval $ [0,M_2]$ using the uniform distribution.
		\item If $\xi < h(x),$ output $x.$ Otherwise reject the sample and start again.
	\end{enumerate}
	\noindent \textbf{Output:} The element $x\in X_{> 0} ,$ a sample from the probability space  $(X_{>0},\mu_{f,g}).$
\end{algo}
To check the validity of this algorithm, consider a bounded test function $\psi: X_{> 0} \rightarrow \RR.$ 
The expected value of $\psi,$ using the samples produced by 
\Cref{alg:mufg}, is equal to
\[
\mathbb{E}[\psi]  \,\,=\,\,
\frac{1}{D} \int_{X_{> 0} \times [0, M_2]} 
\psi(x) \, H\left(  h(x)-\xi \right) \,
\mu^{\rm tr}_{f,g} \wedge {\rm d} \xi.
\]
Here $D$ is a normalization factor that ensures $\mathbb{E}[1] = 1,$
and $H$ denotes the {\em Heaviside function}, i.e.,~$H(t) = 0$ for $t \leq 0$ and $H(t) = 1$ for $t > 0.$
By Equation~\eqref{eq:hbounds}, we have $h(x) \leq M_2$ for all $x \in X_{> 0}.$ 
By evaluating the inner integral over $\xi$ first, we obtain
\[\mathbb{E}[\psi] \,\,=\,\,\frac{1}{D} \int_{X_{> 0}} 
\psi(x) \,  h(x)\,\mu^{\rm tr}_{f,g}\,\,=\,\, \frac{1}{D}
\int_{X_{> 0}} \psi(x) \,  \frac{f}{g} \, \Omega_X . \]
This shows that $D ={\cal I},$
and we conclude that the expected value ${\mathbb E}[\psi]$ equals $ {\mathbb E}_{\mu_{f,g}}[\psi].$

The expected runtime of Algorithm~\ref{alg:mufg} is equal to the runtime of 
Algorithm~\ref{alg:eins} divided by the \emph{acceptance rate}. The latter is the probability that a sample drawn from $d^{\,\rm tr}_{f,g}$ results in a valid sample for $d_{f,g}.$
The bounds on $h$ in~\eqref{eq:hbounds} give rise to a lower bound for this~probability.

\begin{proposition} \label{prop:acceptancerate}
	The acceptance rate in Algorithm~\ref{alg:mufg} is at least $M_1/M_2.$
\end{proposition}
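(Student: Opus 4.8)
The plan is to express the acceptance rate as an expectation with respect to the tropical measure $\mu_{f,g}^{\rm tr}$ and then bound that expectation from below using the pointwise estimate $h(x)\ge M_1$ from \eqref{eq:hbounds}.

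First I would compute the acceptance rate explicitly. In Step~2 of \Cref{alg:mufg}, the auxiliary variable $\xi$ is drawn uniformly from $[0,M_2]$, independently of the point $x\in X_{>0}$ produced in Step~1. Conditioning on $x$, the probability that Step~3 accepts is $\mathbb{P}(\xi<h(x))=h(x)/M_2$; here we use crucially that $0\le h(x)\le M_2$ for all $x$, which is exactly the upper bound in \eqref{eq:hbounds}, so that the ratio is never truncated. Integrating over $x$ against the tropical density yields the acceptance rate
$$a \,=\, \int_{X_{>0}} \frac{h(x)}{M_2}\,\mu^{\rm tr}_{f,g} \,=\, \frac{1}{M_2}\,\mathbb{E}_{\mu^{\rm tr}_{f,g}}[h].$$
Since $\mu^{\rm tr}_{f,g}$ is a probability measure on $X_{>0}$ and $h(x)\ge M_1$ for all $x\in X_{>0}$ by \eqref{eq:hbounds}, monotonicity of the integral gives $\mathbb{E}_{\mu^{\rm tr}_{f,g}}[h]\ge M_1\int_{X_{>0}}\mu^{\rm tr}_{f,g}=M_1$. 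Substituting into the previous display gives $a\ge M_1/M_2$, which is the claim. Equivalently, one may observe that $\mathbb{E}_{\mu^{\rm tr}_{f,g}}[h]=\mathcal{I}/\mathcal{I}^{\rm tr}$ by \eqref{eq:omegatr}, so the acceptance rate equals $\mathcal{I}/(\mathcal{I}^{\rm tr}M_2)$, and the lower bound $\mathcal{I}/\mathcal{I}^{\rm tr}\ge M_1$ follows in the same way.

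There is no genuine obstacle here: the statement is an immediate consequence of the boundedness result \Cref{prop:bounded} (through \eqref{eq:M1M2} and \eqref{eq:hbounds}) combined with the correctness analysis of \Cref{alg:mufg} already carried out above. The only point requiring a moment's care is the verification that the conditional acceptance probability equals $h(x)/M_2$ rather than $\min(h(x)/M_2,1)$, which is guaranteed precisely by the inequality $h\le M_2$.
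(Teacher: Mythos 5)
Your proof is correct and follows essentially the same route as the paper's: compute the acceptance probability as $\frac{1}{M_2}\int_{X_{>0}} h\,\mu^{\rm tr}_{f,g}$ (the paper writes this via the Heaviside function, you via conditioning on $x$ first), then invoke the lower bound $h\ge M_1$ from \eqref{eq:hbounds}. Your remark that the upper bound $h\le M_2$ is what prevents truncation of the conditional probability is a small but worthwhile explicit check that the paper leaves implicit.
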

\begin{proof}
	The probability for acceptance of a sample equals
	\[
	\frac{1}{M_2}
	\int_{X_{> 0} \times [0, M_2]}  H \left( h(x) -\xi \right) \,
	\mu^{\rm tr}_{f,g} \wedge {\rm d} \xi \,\,=\,\,
	\frac{1}{M_2}\cdot
	\int_{X_{> 0}}  h(x)\, \mu^{\rm tr}_{f,g}
	\,\,\geq  \,\, \frac{1}{M_2}\cdot M_1.
	\]
	Here we used the lower bound from~Equation~\eqref{eq:hbounds}.
\end{proof}

The practical significance of \Cref{prop:acceptancerate}
comes from the fact that the lower bound does not depend 
on the dimension $n$ of the sample space $X_{>0}.$ 
This guarantees that---even for high-dimensional problems---the acceptance rate
in Algorithm~\ref{alg:mufg} remains strictly positive.

A word of caution is in order. If $f$ and $g$ have many terms with coefficients of roughly the same magnitude, it is clear that $M_1$ and $M_2$ are very small and very large, respectively. Moreover, in the statistical setting, the coefficients of $f$ and $g$ depend on the data vector, which was
called $u$ in the Introduction. We warn the reader that, despite dimension independence of the bounds, the efficiency of our sampling and integration approach in this setting declines when the entries of $u$ get large. We will see this in our computations of Section \ref{sec6}.

We conclude  with an example that illustrates the
material seen in this section.

\begin{example}[Pentagon] \label{ex:pentagon2}
	Let $X$ be the toric surface in
	\Cref{ex:pentagon1}.
	We consider the integral $\mathcal{I}$ in \eqref{eq:classicaldensity}
	and the probability density $d_{f,g}$ in \eqref{eq:dfg} defined by 
	$$ \begin{matrix}
		f &= & 
		2 \,x_1^2 x_2^2 x_3^3 x_4 x_5^3 \,\,+ \,\,
		3 \,x_1^2 x_2 x_3^2 x_4^2 x_5^4 \,\,+ \,\,
		5 \,x_1 x_2^2 x_3^5 x_4 x_5^2,
		\smallskip \\
		g &= &
		7 \,x_1^3 x_2^3 x_3^2 x_5^3 \,+ \,
		11 \,x_1^3 x_2 x_4^2 x_5^5 \,+ \,
		13 \,x_1 x_3^3 x_4^3 x_5^4 \,+ \,
		17 \, x_2^2 x_3^7 x_4 x_5.
	\end{matrix}
	$$
	Both $f$ and $g$ are homogeneous of degree $\gamma = (3,8,8)$ in the grading given by \eqref{eq:Wmatrix}. 
	We remark that $\gamma \in {\rm Cl}(X) \setminus {\rm Pic}(X)$ does not come from a Cartier divisor.
	The $16$ monomials of that degree are the lattice points in a quadrilateral whose normal fan is refined by $\Sigma.$ This is shown in green in Figure \ref{fig:polygonsEx45}. We see that 
	the orange triangle $\mathcal{N}(f)$ is contained in the interior of the purple quadrilateral $\mathcal{N}(g).$ 
	Hence \Cref{thm:convergent} ensures that the integral $\mathcal{I}$ converges.
	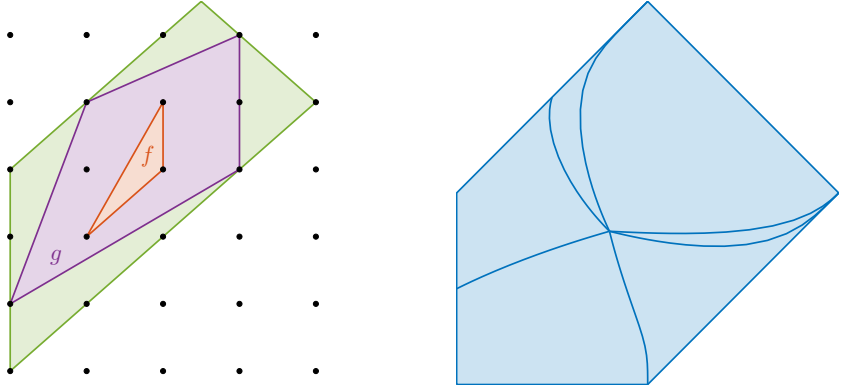
\begin{figure}[ht]
		\centering
		\begin{tikzpicture}[scale=0.8] \begin{axis}[ width=2.2in, height=2.5in, scale only axis, xmin=-2.2, xmax=2.2, ymin=-3.2, ymax=2.5, ticks = none, ticks = none, axis background/.style={fill=white}, axis line style={draw=none} ] \addplot [color=mycolor5,solid,thick, fill = mycolor5!20!white,forget plot] table[row sep=crcr]{ -2 -3\\
-2 0 \\
0.5 2.5\\
2 1\\
-2 -3\\
}; \addplot [color=mycolor4,solid,thick, fill = mycolor4!20!white,forget plot] table[row sep=crcr]{ -2 -2 \\
1 0\\
1 2\\
-1 1\\
-2 -2\\
}; \addplot [color=mycolor2,solid,thick, fill = mycolor2!20!white,forget plot] table[row sep=crcr]{ -1 -1\\
0 0\\
0 1\\
-1 -1\\
}; \addplot[only marks,mark=*,mark size=1.2pt,black ] coordinates { (-3, -3) (-2,-3) (-1,-3) (0,-3) (1,-3) (2,-3) (3, -3) (-3, -2) (-2,-2) (-1,-2) (0,-2) (1,-2) (2,-2) (3, -2) (-3, -1) (-2,-1) (-1,-1) (0,-1) (1,-1) (2,-1) (3,-1) (-3, -0) (-2,-0) (-1,-0) (0,-0) (1,-0) (2,-0) (3, -0) (-3, 1) (-2,1) (-1,1) (0,1) (1,1) (2,1) (3, 1) (-3, 2) (-2,2) (-1,2) (0,2) (1,2) (2,2) (3, 2) (-3, 3) (-2,3) (-1,3) (0,3) (1,3) (2,3) (3, 3) }; \node (A1) at (axis cs:2, 2) {}; \node (A2) at (axis cs:0, 1) {}; \node (B1) at (axis cs:-2,-2) {}; \node (B2) at (axis cs:-1, 0) {}; \node (C1) at (axis cs:-3,3) {}; \node (C2) at (axis cs:-2,2) {}; \node (u1) at (axis cs:-1.4,-1.3) {$\textcolor{mycolor4}{g}$}; \node (u1) at (axis cs:-0.2,0.2) {$\textcolor{mycolor2}{f}$}; \end{axis} \end{tikzpicture}
		\qquad \qquad
		\begin{tikzpicture}[scale=0.8] \begin{axis}[ width=2.5in, height=2.5in, scale only axis, xmin=-0, xmax=2.0, ymin=-0.0, ymax=2.0, ticks = none, ticks = none, axis background/.style={fill=white}, axis line style={draw=none} ] \addplot [color=mycolor1,solid,thick, fill = mycolor1!20!white,forget plot] table[row sep=crcr]{ 0 0\\
0 1\\
1 2\\	 2 1\\
1 0\\
0 0\\
}; \addplot [color=mycolor1,solid,thick,forget plot] table[row sep=crcr]{ 0.8015206866389119 0.7999200483580394 \\
0.8019146543158784 0.7998993627564008 \\
0.8024107604080449 0.799873330540932 \\
0.8030355268717795 0.7998405729516069 \\
0.8038223867518556 0.7997993576236457 \\
0.8048134988914829 0.7997475091210565 \\
0.806062046874736 0.7996822976350562 \\
0.8076351555634163 0.7996003010787001 \\
0.809617597368483 0.7994972351992968 \\
0.8121165112686588 0.7993677460358508 \\
0.8152674244436201 0.7992051595371545 \\
0.8192419540965267 0.7990011853508537 \\
0.8242576811941681 0.7987455775006224 \\
0.8305908335989786 0.7984257674033639 \\
0.8385925946880386 0.7980265112818004 \\
0.8487100538142075 0.7975296478143193 \\
0.8615129926588495 0.7969141687238057 \\
0.8777277332115833 0.7961570150409301 \\
0.8982788551331814 0.7952354207017245 \\
0.9243380273445426 0.79413241077506 \\
0.9573749889870049 0.7928485355196047 \\
0.9991958313742026 0.791425562418795 \\
1.0519327307569295 0.7899921541801014 \\
1.1179095655541025 0.7888471552589856 \\
1.199246488595043 0.7885986278023105 \\
1.2970128996494639 0.7903602166914636 \\
1.4098189519444153 0.7959336117202666 \\
1.5322160243082719 0.8077384073423377 \\
1.6542479179249738 0.8280790054346812 \\
1.7639030003179543 0.8575738626873127 \\
1.8520172740863572 0.8935755118811415 \\
1.9156724832613126 0.930264393312653 \\
1.9572410240415283 0.9611101601616978 \\
1.981413489825807 0.9821426125570012 \\
1.9934131091512022 0.9935031773445718 \\
1.9982079603758143 0.9982145168332485 \\
1.9996515898883274 0.9996518349261023 \\
1.9999556653526878 0.9999556692968918 \\
1.9999966919341592 0.9999966919560658 \\
1.9999998739471652 0.9999998739471969 \\
1.9999999979388465 0.9999999979388464 \\
1.9999999999883824 0.9999999999883824 \\
1.9999999999999827 0.9999999999999828 \\
2.0 1.0 \\
2.0 1.0 \\
2.0 1.0 \\
2.0 1.0 \\
2.0 1.0 \\
2.0 1.0 \\
2.0 1.0 \\
2.0 1.0 \\
}; \addplot [color=mycolor1,solid,thick,forget plot] table[row sep=crcr]{ 0.8009603115660523 0.7997600722384104 \\
0.8012090620159799 0.7996979724891757 \\
0.8015222794449439 0.7996198074387844 \\
0.8019166904670288 0.799521425575409 \\
0.8024133726752407 0.7993976053245092 \\
0.8030388928074117 0.7992417809014174 \\
0.8038267462586172 0.7990456989582359 \\
0.8048191797227421 0.7987989892163185 \\
0.8060695019863984 0.7984886286429569 \\
0.8076450182304781 0.7980982746482863 \\
0.8096307629457115 0.797607438485771 \\
0.812134258814783 0.7969904661162434 \\
0.8152915976103865 0.7962152914785425 \\
0.8192752291102848 0.7952419288964565 \\
0.8243039603687948 0.794020682140102 \\
0.8306558140849595 0.7924900769137877 \\
0.8386845686948105 0.790574589389379 \\
0.8488409836918831 0.7881823803452643 \\
0.861699836278371 0.7852035176557248 \\
0.8779937903744512 0.7815097006830861 \\
0.8986543866163524 0.7769575090261016 \\
0.9248581982497683 0.771399066176242 \\
0.9580706078063128 0.7647073298064309 \\
1.0000672400913821 0.7568287031020569 \\
1.0528882573492457 0.747883395972206 \\
1.1186380037594974 0.7383407586876767 \\
1.1989889281071353 0.7292892314973001 \\
1.2942409287399277 0.7227636510102498 \\
1.4019883995608584 0.7219341445947441 \\
1.5160665716150452 0.7307215635289576 \\
1.6272153724877003 0.7523979922233262 \\
1.7263607153126332 0.787496143908311 \\
1.808544221257428 0.8325352720130694 \\
1.8736543079532464 0.880861979738419 \\
1.9234884352890225 0.925047917224536 \\
1.9591713950469551 0.9593958134257301 \\
1.9816572428628116 0.9816766330717708 \\
1.993384736806289 0.9933856204494347 \\
1.9981844751463598 0.9981844931856546 \\
1.999645086672177 0.9996450868064222 \\
1.9999546021310168 0.9999546021312976 \\
1.9999965915657159 0.9999965915657161 \\
1.9999998691130751 0.9999998691130751 \\
1.9999999978388454 0.9999999978388453 \\
1.9999999999876683 0.9999999999876684 \\
1.9999999999999813 0.9999999999999815 \\
2.0 1.0 \\
2.0 1.0 \\
2.0 1.0 \\
2.0 1.0 \\
2.0 1.0 \\
}; \addplot [color=mycolor1,solid,thick,forget plot] table[row sep=crcr]{ 0.8002400717616105 0.7990403124332517 \\
0.8003022557354967 0.7987919269556245 \\
0.8003805542729939 0.7984792879677989 \\
0.8004791476992491 0.7980857937106874 \\
0.8006033032549009 0.7975905644708869 \\
0.8007596591006785 0.796967347123005 \\
0.8009565832932255 0.7961831433001192 \\
0.8012046279133658 0.7951964940095557 \\
0.8015171041335113 0.7939553387982544 \\
0.8019108112208898 0.7923943507377971 \\
0.8024069617237174 0.7904316300965586 \\
0.8030323568717745 0.7879646210443834 \\
0.8038208810067857 0.7848651002090208 \\
0.8048154018509851 0.7809730795231937 \\
0.8060701840337562 0.7760894801634779 \\
0.8076539439345078 0.7699674907444833 \\
0.8096536874351321 0.7623026594499677 \\
0.8121794617828485 0.752722053583742 \\
0.8153700818445211 0.7407733662743002 \\
0.8193996844726611 0.7259158489453277 \\
0.8244844766539992 0.7075166935999837 \\
0.8308880092699754 0.6848593790980904 \\
0.8389212990047358 0.6571749088111399 \\
0.8489305765831088 0.6237126655649656 \\
0.8612600448569387 0.5838727623581308 \\
0.8761710236942853 0.5374194419467476 \\
0.8936981113911688 0.48476975859949106 \\
0.9134422378097709 0.42728079046005574 \\
0.9343610007215083 0.3673383135475512 \\
0.9547171490089942 0.3079656991489156 \\
0.9723997291948999 0.2518473645295834 \\
0.9856657137333834 0.20024757246159033 \\
0.9939370498144786 0.15286115317698926 \\
0.9980277607784995 0.10917371938793886 \\
0.9995410165187973 0.07029142318795548 \\
0.9999300808509357 0.03909838520814076 \\
0.9999937308690112 0.01800027841890348 \\
0.9999997086807995 0.006571214124345083 \\
0.9999999940050677 0.0018122227501402631 \\
0.9999999999552781 0.00035478731966275126 \\
0.9999999999999064 4.539580792308283e-5 \\
1.0 3.408422666596551e-6 \\
1.0 1.3088690772857902e-7 \\
1.0 2.1611547448324278e-9 \\
1.0 1.2331497984168996e-11 \\
1.0 1.8467266624096254e-14 \\
1.0 5.133638251052602e-18 \\
1.0 1.712832928636948e-22 \\
1.0 3.961600680388522e-28 \\
1.0 3.182460953873484e-35 \\
1.0 3.720075976020882e-44 \\
}; \addplot [color=mycolor1,solid,thick, forget plot] table[row sep=crcr]{ 0.7994400720602481 0.7998399920229331 \\
0.7992951160019068 0.7997985593007249 \\
0.7991126409078549 0.7997463970854126 \\
0.7988829402192047 0.7996807263631942 \\
0.7985937988420752 0.7995980480578322 \\
0.7982298464142706 0.7994939562995687 \\
0.7977717447659594 0.7993629031826036 \\
0.7971951676254588 0.7991979023622873 \\
0.7964695203517314 0.7989901555233533 \\
0.7955563349101393 0.7987285815515929 \\
0.7944072600827226 0.7983992229310761 \\
0.7929615487102193 0.7979844971786677 \\
0.7911429224671133 0.7974622526664812 \\
0.7888556705406709 0.7968045775627081 \\
0.7859798127075072 0.7959762974063725 \\
0.7823651323498008 0.7949330806495657 \\
0.777823866578498 0.7936190522631028 \\
0.772121839970223 0.7919637938993928 \\
0.764967866594707 0.7898785876693101 \\
0.7560013610534039 0.7872517458771878 \\
0.7447783639162482 0.7839428760642235 \\
0.7307567257383842 0.7797759914750457 \\
0.7132822231090538 0.7745315568675228 \\
0.6915792562888867 0.7679379857233104 \\
0.6647530422013466 0.7596640127227743 \\
0.6318155626570646 0.7493151564167737 \\
0.5917554743065151 0.736440752548611 \\
0.5436819338757348 0.7205633735891905 \\
0.48707884370283294 0.7012495934131275 \\
0.42219496730792394 0.678246485872755 \\
0.35053864163768156 0.6517007306870544 \\
0.27531412839832325 0.6224350195447725 \\
0.20145766455889125 0.5921616636537566 \\
0.1348802752625155 0.5634038033567774 \\
0.08086899283838969 0.5389176147512719 \\
0.04229286892457165 0.5207170537773015 \\
0.018662433084882746 0.5092457257884179 \\
0.006658278197768967 0.5033181289976173 \\
0.0018188058932541262 0.5009085774329937 \\
0.0003550391793377504 0.5001774880876465 \\
4.539992971569891e-5 0.500022699449593 \\
3.4084459013856844e-6 0.5000017042200463 \\
1.3088694199134946e-7 0.5000000654434665 \\
2.1611547541736068e-9 0.5000000010805772 \\
1.2331497984473115e-11 0.5000000000061658 \\
1.8467266624096917e-14 0.5000000000000093 \\
5.1336382510525735e-18 0.49999999999999994 \\
1.7128329286369312e-22 0.49999999999999983 \\
3.961600680388468e-28 0.5000000000000006 \\
3.1824609538734295e-35 0.4999999999999994 \\
3.7200759760208177e-44 0.49999999999999933 \\
}; \addplot [color=mycolor1,solid,thick,forget plot] table[row sep=crcr]{ 0.799600080093308 0.8004000799066414 \\
0.7994965568128984 0.8005036967698851 \\
0.7993662440453368 0.8006341578561726 \\
0.7992022143007195 0.8007984226699503 \\
0.7989957516714943 0.8010052578582401 \\
0.79873589188485 0.8012657081100838 \\
0.7984088451148844 0.8015936907014966 \\
0.797997272308536 0.802006746677786 \\
0.7974793788832936 0.802526990751135 \\
0.7968277813956176 0.8031823137201903 \\
0.7960080930772744 0.8040079064160743 \\
0.7949771630778886 0.8050481939405629 \\
0.7936808922626655 0.806359294723632 \\
0.7920515365857849 0.8080121525323483 \\
0.7900043997384342 0.8100965332697815 \\
0.7874338146287322 0.8127261347199745 \\
0.7842083272325748 0.816045128470497 \\
0.7801650431821208 0.8202365392054034 \\
0.7751032079080973 0.8255329615213475 \\
0.7687773210787434 0.8322301974514897 \\
0.7608905351487413 0.8407044135161876 \\
0.7510899293119099 0.8514332338339343 \\
0.7389667719102595 0.8650205197658272 \\
0.724067524838948 0.8822228474094846 \\
0.7059256503006408 0.9039717502553297 \\
0.6841305613797686 0.9313777000525656 \\
0.658457135862355 0.9656868964975737 \\
0.6290811446737619 1.008138764027184 \\
0.5968858081490506 1.0596475780376997 \\
0.5637898858722487 1.120238235386797 \\
0.5328706469978659 1.1882796513814822 \\
0.5078871812904093 1.2598568359637663 \\
0.49193545613077727 1.3289731087806476 \\
0.48572960551830174 1.389115669672196 \\
0.4869175561789973 1.4356187864861467 \\
0.4914067282760475 1.467213970753633 \\
0.49572340207592774 1.48578994666022 \\
0.4983851761010123 1.4949787860670738 \\
0.49954901745917 1.4986338321805461 \\
0.49991138199501156 1.4997336418632505 \\
0.49998865233632805 1.4999659487645567 \\
0.499999147901594 1.4999974436583128 \\
0.49999996727828344 1.4999999018347823 \\
0.49999999945971124 1.499999998379134 \\
0.499999999996917 1.4999999999907514 \\
0.4999999999999954 1.4999999999999862 \\
0.5 1.5 \\
0.5 1.5 \\
0.4999999999999999 1.4999999999999998 \\
0.5 1.5 \\
0.5000000000000001 1.5000000000000002 \\
}; \addplot [color=mycolor1,solid,thick,forget plot] table[row sep=crcr]{ 0.7993603132834404 0.8013607102951685 \\
0.7991947847837753 0.8017132636014286 \\
0.7989864571741683 0.8021572364134206 \\
0.7987242844051656 0.8027163777144934 \\
0.7983943816047351 0.8034206308980771 \\
0.797979302862857 0.8043077635771535 \\
0.7974571398978111 0.8054254340365832 \\
0.7968004002115665 0.8068338154317387 \\
0.7959746159123604 0.8086089343738148 \\
0.7949366273203108 0.8108469271898042 \\
0.7936324805721677 0.8136694783456109 \\
0.7919948793086022 0.8172307853633345 \\
0.7899401441618903 0.8217264972017082 \\
0.7873646735089389 0.8274052008952613 \\
0.7841409899678204 0.834583180174753 \\
0.7801136463635393 0.8436633180339084 \\
0.7750956390829215 0.855159097985208 \\
0.7688666930705012 0.8697245134542939 \\
0.7611761169354042 0.8881899470751803 \\
0.7517553329332074 0.9116019228134227 \\
0.7403493186686129 0.9412594300403286 \\
0.7267826908801218 0.9787282813443154 \\
0.7110845754336228 1.025793204297248 \\
0.6937019122707333 1.0842715587895795 \\
0.6758166928597186 1.1555716082710483 \\
0.6597109057387432 1.2398841638504807 \\
0.6489494417062046 1.3350840812441684 \\
0.6479334385700718 1.435936774227057 \\
0.6604699080184676 1.5347937493915806 \\
0.6878641109758465 1.6244744851580613 \\
0.7280679877384533 1.7017323656754375 \\
0.7767578120615259 1.768061765741601 \\
0.8290440936160455 1.8268597038579593 \\
0.8800803853764755 1.8796865887667125 \\
0.9249373871487816 1.9248903222326643 \\
0.9593869602726229 1.9593836084563292 \\
0.9816762822209906 1.981676158527315 \\
0.9933856146306814 1.9933856126519487 \\
0.9981844931529437 1.9981844931419803 \\
0.9996450868063743 1.9996450868063584 \\
0.9999546021312975 1.9999546021312975 \\
0.9999965915657161 1.999996591565716 \\
0.9999998691130753 1.9999998691130754 \\
0.9999999978388452 1.9999999978388452 \\
0.9999999999876685 1.9999999999876685 \\
0.9999999999999815 1.9999999999999813 \\
1.0 2.0 \\
1.0 2.0 \\
1.0 2.0 \\
1.0 2.0 \\
1.0 2.0 \\
}; \end{axis} \end{tikzpicture}
		\caption{Newton polygons and sector decomposition from \Cref{ex:pentagon2}.}
		\label{fig:polygonsEx45}
	\end{figure}

	Let $\mathcal{F}$ be the normal fan of the hexagon $\mathcal{N}(f) + \mathcal{N}(g).$
	There are six cones $\sigma$ in
	$\mathcal{F}(2).$
	The tropical integral $\mathcal{I}^{\rm tr}$ is the sum of the
	six numbers $\mathcal{I}^{\rm tr}_\sigma$
	in  $\eqref{eq:detVU}.$
	 We~find
	$$ \mathcal{I}^{\rm tr} \,\,= \,\,
	1 + 2 + \frac{3}{2} + 1 + \frac{1}{4} + \frac{7}{2} \,\, = \,\,
	\frac{37}{4}.
	$$
	The surface $X_{> 0}$ is divided into six sectors ${\rm Exp}(\sigma).$ This 
	can be visualized via the moment map $X_{>0} \rightarrow P^\circ,$ as shown
	in Figure \ref{fig:polygonsEx45}. On each sector,
	we have a monomial map $q \mapsto x^{\sigma}(q)$ 
	with rational exponents, given  in Equation \eqref{eq:xsigmaq}.
	Using this map, we now apply Algorithm~\ref{alg:eins}.
	We draw $N=10000$ samples from the tropical density~$d^{\rm tr}_{f,g}.$
	The formula \eqref{eq:montecarlo} then
	gives the following approximate value for the classical~integral:
	$$ \mathcal{I} \,\, \approx \,\,  2.8677596477559826. $$

	We next apply  \Cref{prop:expdivi}. From
	\eqref{eq:M1M2} we get  $M_1 = 1/24$ and $M_2 = 10/7.$
	This implies that the standard deviation $\sqrt{\mathbb{E}[(\mathcal{I} - \mathcal{I}_N)^2]}$
	is at most  $0.132.$
	By comparing with a more accurate approximation, using numerical cubature for \eqref{eq:cubetrafo}, we find that the error is no larger than $0.005.$
	Repeating this experiment for a range of sample sizes~$N,$ we find
	that our approximation beats the generic bound \eqref{eq:generalbound} by  two orders of magnitude.
	This illustrates a phenomenon that is observed for many examples: the bounds~\eqref{eq:generalbound} are overly pessimistic.

	Finally, we use \Cref{alg:mufg} to sample from the 
	posterior distribution $d_{f,g}.$ From
	$N=100000$ candidate samples, $21808$ were accepted. The bound on the expected value of the acceptance rate in \Cref{prop:acceptancerate} is $M_1/M_2 \approx 0.03.$ Again, this is pessimistic. From the proof of \Cref{prop:acceptancerate} we see that the actual expected acceptance rate is 
	\[ \frac{1}{M_2} \cdot \frac{{\cal I}}{{\cal I}^{\, \rm tr}} \, \approx \,  0.22. \qed \] %
\end{example}

\section{Statistical Models} \label{sec5}
In this section, we
present several statistical models, some well-known and
others less so. They all have
a natural polyhedral structure which allows for  a parameterization $X_{>0} \rightarrow \Delta_m$
from a toric variety $X$.
This includes both toric models and linear models. We argue that this passage
to toric geometry
makes sense, also from an applied perspective, since
Bayesian integrals \eqref{eq:intonpol} can now be evaluated
using tropical sampling.
Such integrals depend on experimental data. We will study them in the
next section. 

The common parameter space for our models is the positive part of a projective toric variety $X = X_\Sigma$ of dimension $n$.
We assume that the fan $\Sigma$ is simplicial, so it is the
normal fan of a simple lattice polytope $P $ in $\RR^n.$ The polytope $P$
is not unique. There is one such polytope for each very ample divisor on $X.$
The vertex set $\mathcal{V}(P)$ is in  bijection with the maximal cones in the normal fan of $P.$
The prior distribution on $X_{>0}$ has a density that is 
a positive rational function. We obtain $1$ when integrating this density  against the~form
\begin{equation}
	\label{eq:omegax}
	\Omega_X \quad = \quad \sum_I {\rm det}(V_I) \bigwedge_{i \in I} \frac{{\rm d} x_i}{x_i}. 
\end{equation}

Fix the uniform distribution on the polytope $P.$ We  consider models of the form
\begin{equation}
	\label{eq:modelfrompolytope} P \, \rightarrow \,\Delta_m, \quad \,y \,\mapsto \,\bigl(\,p_0(y),\,p_1(y),\,\ldots,\,p_m(y)\, \bigr). 
\end{equation}
One task in \Cref{sec6} is to evaluate marginal likelihood integrals \begin{equation} \label{eq:intonpol}
	\int_P p_0^{u_0} p_1^{u_1} \cdots p_m^{u_m} \, \ {\rm d} y_1\wedge  \cdots \wedge {\rm d} y_n. 
\end{equation}
Here we use uniform priors on $P$. 
One still needs to divide by the volume of $P.$ We here ignore this factor for simplicity. 
While (\ref{eq:modelfrompolytope}) is fairly natural from a statistical perspective,
it seems that the construction in the next paragraph, namely lifting this to the
toric variety via the moment map, has not yet been considered in the statistics literature.

We lift \eqref{eq:modelfrompolytope} to the positive toric variety $X_{>0}$
by composing with the moment map $X_{>0} \rightarrow P^\circ,$ where $P^\circ$ is the interior of $P.$ We do this in two steps, by writing the moment map as $\varphi \circ \phi,$ where $\phi$
is the identification $ X_{>0} \simeq \RR^n_{>0}$  and $\varphi : \RR^n_{>0} \rightarrow P^\circ$ is the \emph{affine moment map}. The latter can be defined by a Laurent polynomial with positive coefficients $c_a \in \RR_{>0},$
\[ q \,\,\, = \sum_{a \in \mathcal{V}(P)} \!\! c_a \, t^a ~ \in \,\,\RR[t_1^{\pm 1}, \ldots, t_n^{\pm 1}]. \]
The map $\varphi$ sends $t \in \RR^n_{>0}$ 
to the following convex combination of $\mathcal{V}(P)$:
\begin{equation}
	\label{eq:affinemomentmap}
	\varphi(t)  \,\,=\,\, \frac{1}{q(t)} 
	\left (\,\theta_1(q(t)),\,
	\ldots\,,\, \theta_n(q(t))\, \right ) \,
	\,\,=\,\,\sum_{a \in \mathcal{V}(P)} \frac{c_a \, t^a}{q(t)} \cdot a .
\end{equation}
Here, $\theta_i$ denotes the $i$th Euler operator $t_i\partial_{t_i}.$
The {\em toric Jacobian} $(\theta_j(\varphi_i))_{i,j}$  of the map~$\varphi$ is the  {\em toric Hessian} $H$ of ${\rm log}(q(t)).$
This is the symmetric $n \times n$ matrix with entries
$ H_{i,j} =\theta_i\theta_j \left(\log q(t)\right).$
Since $\varphi$ is a diffeomorphism, the determinant of $H$ is nowhere zero on $\RR^n_{> 0}.$ Moreover, its denominator $q^n$ has positive coefficients, so there are no poles on~$\RR^n_{>0}$ either.
Recall that the columns of $V \in \ZZ^{n \times k}$ are the facet normals of 
the simple polytope~$P.$ This gives us a formula for
the density on $X_{>0}$ that represents
the uniform distribution~on~$P.$

\begin{proposition} \label{prop:liftform}
	The pullback of ${\rm d} y_1 \cdots {\rm d} y_n$ under the moment map  $X_{>0} \rightarrow P^\circ$
	is a positive rational function $r$ times the canonical form $ \Omega_X.$ We obtain $r(x)$ 
	from the toric Hessian $H(t)$ by replacing $t_1,\ldots,t_n$
	with the Laurent monomials in $x_1,\ldots,x_k$ given by the rows of~$V.$
\end{proposition}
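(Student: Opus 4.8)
The plan is to pull back $\mathrm{d}y_1 \wedge \cdots \wedge \mathrm{d}y_n$ through the factorization $X_{>0} \xrightarrow{\phi} \RR^n_{>0} \xrightarrow{\varphi} P^\circ$ of the moment map, so that the total pullback is $\phi^*\!\bigl(\varphi^*(\mathrm{d}y_1 \wedge \cdots \wedge \mathrm{d}y_n)\bigr).$ First I would treat $\varphi.$ Formula \eqref{eq:affinemomentmap} says $\varphi_i(t) = \theta_i q(t)/q(t) = \theta_i\bigl(\log q(t)\bigr),$ so the ordinary Jacobian of $\varphi$ satisfies $\bigl(\partial\varphi_i/\partial t_j\bigr)_{i,j} = \bigl(\theta_j\varphi_i\bigr)_{i,j}\cdot\diag(t_1^{-1},\dots,t_n^{-1}),$ since $\partial/\partial t_j = t_j^{-1}\theta_j.$ The matrix $\bigl(\theta_j\varphi_i\bigr)_{i,j} = \bigl(\theta_i\theta_j\log q(t)\bigr)_{i,j}$ is exactly the toric Hessian $H(t).$ Taking determinants and absorbing the factor $\prod_j t_j^{-1}$ into the wedge of differentials gives
\[
\varphi^*\bigl(\mathrm{d}y_1 \wedge \cdots \wedge \mathrm{d}y_n\bigr)\;=\;\det H(t)\,\bigwedge_{j=1}^n\frac{\mathrm{d}t_j}{t_j}.
\]

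Next I would apply $\phi^*.$ By its construction, $\phi\colon X_{>0}\to\RR^n_{>0}=T_{>0}$ is the restriction of the quotient map $\pi$ to positive Cox coordinates followed by the identification $T\simeq(\CC^*)^n$; in coordinates it is the monomial map $t_j=\prod_{i=1}^k x_i^{\langle v_i,e_j\rangle}$ given by the rows of $V.$ The discussion following \eqref{eq:canonicalform} in \Cref{sec2} already establishes that $\Omega_X$ restricted to the dense torus equals $\pi^*\!\bigl(\bigwedge_j\mathrm{d}t_j/t_j\bigr),$ that is, $\phi^*\!\bigl(\bigwedge_j\mathrm{d}t_j/t_j\bigr)=\Omega_X$ on $X_{>0}.$ Therefore
\[
\phi^*\!\Bigl(\det H(t)\,\bigwedge_{j=1}^n\frac{\mathrm{d}t_j}{t_j}\Bigr)\;=\;\bigl(\det H\bigr)\!\bigl(\phi(x)\bigr)\cdot\Omega_X\;=\;r(x)\,\Omega_X,
\]
where $r(x)$ is obtained from $\det H(t)$ by substituting $t_j\mapsto\prod_i x_i^{\langle v_i,e_j\rangle},$ which is precisely the prescription in the statement. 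Since $\det H(t)$ is a rational function in $t$ with denominator $q(t)^n,$ the substitution produces a ratio of Laurent polynomials in $x_1,\dots,x_k$ that is homogeneous of degree $0$ for the $\mathrm{Cl}(X)$-grading, hence a well-defined rational function on $X.$

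The one step that asks for a real argument, rather than bookkeeping, is positivity of $r$. Writing $t_j=e^{s_j},$ the function $\log q=\log\sum_{a\in\mathcal V(P)}c_a e^{\langle a,s\rangle}$ is a log-sum-exp of affine functions of $s,$ hence convex, and its Hessian in the $s$-coordinates is exactly $H(t)=\bigl(\theta_i\theta_j\log q\bigr).$ Because $P$ is full-dimensional, the vertices $\mathcal V(P)$ affinely span $\RR^n,$ so this Hessian is positive definite; thus $\det H>0$ everywhere on $\RR^n_{>0},$ and composing with the monomial map $\phi$ preserves this, giving $r(x)>0$ on $X_{>0}.$ (This also recovers, from convexity and from $c_a>0,$ the claims made just before the proposition that $\det H$ has neither zeros nor poles on $\RR^n_{>0}.$ Alternatively, one could invoke that $\varphi$ is an orientation-preserving diffeomorphism onto $P^\circ$ and evaluate $\det H$ at a single point.) The only mild subtlety in the computational part is keeping the chain-rule bookkeeping straight across the factorization $\varphi\circ\phi$ and correctly identifying the toric Hessian with the matrix $\bigl(\theta_j\varphi_i\bigr)$; with that in place the proposition follows.
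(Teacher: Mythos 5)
Your proof is correct, and the paper itself does not give an explicit proof of this proposition — it only sets up the surrounding discussion (defining the toric Jacobian/Hessian, noting $\varphi$ is a diffeomorphism and that $\det H$ is nonvanishing with poles only from $q^n$). Your argument is exactly the computation that discussion leaves implicit: factor the moment map as $\varphi\circ\phi$, compute $\varphi^*(\mathrm{d}y_1\wedge\cdots\wedge\mathrm{d}y_n)=\det H(t)\bigwedge_j\mathrm{d}t_j/t_j$ by converting ordinary derivatives to Euler operators, and then use the already-established fact that $\phi^*\bigl(\bigwedge_j\mathrm{d}t_j/t_j\bigr)=\Omega_X$ on $X_{>0}$. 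The chain-rule bookkeeping (absorbing $\prod_j t_j^{-1}$ into the log-differentials) is right.

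The one place you add something beyond what the paper states is positivity of $r$: the paper only remarks that $\det H$ is nonvanishing because $\varphi$ is a diffeomorphism, which by itself gives nonvanishing rather than positivity (positivity then follows by connectedness and a single-point evaluation, as you note parenthetically). Your direct argument is cleaner: in the coordinates $s_j=\log t_j$ the Euler operators $\theta_j$ become $\partial_{s_j}$, so $H$ is the ordinary Hessian of the log-sum-exp $\log\sum_{a\in\mathcal V(P)}c_a e^{\langle a,s\rangle}$, which is the covariance matrix of the discrete distribution with weights $c_a t^a/q(t)$ on the vertex set $\mathcal V(P)$; since $P$ is full-dimensional this matrix is positive definite, so $\det H>0$ everywhere. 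This buys you positivity and nonvanishing in one stroke, and also gives a conceptual explanation for why $\varphi$ is a diffeomorphism in the first place, rather than invoking it as a black box.
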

\begin{example}
	For the coin model in the Introduction, with
	$n=3,$ $P = [0,1]^3,$ and $X = \PP^1\times \PP^1 \times \PP^1$, the desired  function $r$ 
	is the factor before $\Omega_X$ in Equation~\eqref{eq:unif}.
\end{example}

\Cref{prop:liftform}
means that  the integral \eqref{eq:intonpol} is the following integral over~$X_{>0}$:
\begin{equation}
	\label{eq:intonpol2}
	\int_{X_{>0}} p_0(x)^{u_0} p_1(x)^{u_1} \cdots p_m(x)^{u_n} \, r(x) \,\Omega_X ,
\end{equation}
where $p_i(x)$ arises from $p_i(y)$ by the above two-step substitution: we
first set $y = \varphi(t)$ and then we replace $t$ with the Laurent monomials in $x$ given by the rows of~$V.$

\begin{example} \label{ex:pentagonrevisit}
	The pentagon $P$ from \Cref{ex:pentagon1}  is the Newton polytope of
	$$ q(t_1,t_2) \,=\, t_2^{-1} + t_1^{-1}t_2^{-1} + t_1^{-1} + t_2 + t_1.$$
	We identify its interior $P^\circ$ with the positive quadrant $\RR^2_{>0}$ via the affine moment map
	$$ \varphi\colon \,\RR^2_{>0} \stackrel{\simeq}{\longrightarrow} P^\circ, \quad  (t_1, t_2) \, \mapsto\, 
	\frac{1}{q} \left (\theta_1(q), \theta_2(q) \right).$$
	Its toric Jacobian is the toric Hessian $H$ of ${\rm log}(q).$ 
	Its determinant is
	$$\det(H) \,\,=\,\,  \frac{4 t_1^3 t_2^2+4 t_1^2 t_2^3+9 t_1^2 t_2^2+4 t_1^2 t_2+4 t_1 t_2^2+t_1^2+8 t_1 t_2+t_2^2+1}{ (t_1 t_2)^{2} \cdot q(t_1,t_2)^{3}}. $$
	Turning rows of  \eqref{eq:Vmatrix} into monomials, we set
	$t_1 = x_1 x_2 x_3^{-1} x_4^{-1}$ and
	$t_2 = x_2^{-1} x_3^{-1} x_4 x_5.$
	This substitution turns $\det(H)$ into the rational function $r,$ as seen in
	\Cref{prop:liftform}. Writing $r = f/g$ as in
	\Cref{thm:convergent}, one sees that the  Newton polygons
	satisfy~the containment hypothesis.
	It is instructive to compute the area of the pentagon via
	$$   \int_{X_{>0}}\!\! r(x) \,\Omega_X \,\, = \,\,\int_P \! 1 \, {\rm d}y_1{\rm d}y_2 \,\,=\,\, \frac{5}{2}. $$
	The integrals are \eqref{eq:intonpol2} $=$
	\eqref{eq:intonpol} with $u_i=0.$
	The first is found numerically by~\Cref{sec4}.
\end{example}

We now turn to the statistical models associated to our polytope $P$.
We begin with the
{\em linear model} associated with our polytope $P.$
From now on we assume that $P$ contains the origin in its interior.
We thus have the inequality representation
$$ P \,\,\, = \,\,\, \bigl\{ \,y \in \RR^n \,\,|\,\, \langle v_i , y \rangle + \alpha_i \,\geq \, 0 \,\,
{\rm for} \,\, i = 1,2,\ldots,k \,\}, $$
where $\alpha_1,\ldots,\alpha_k $ are positive integers.
Vertices $q_I$ of $P$ are indexed by cones $I \in \Sigma(n).$
The vertex $q_I \in \ZZ^n$ is the unique solution to the $n$ linear equations
$\langle v_i , y \rangle \,=\, - \alpha_i $ for $i\in I.$ The following lemma helps to interpret the facet equations as probabilities.
\begin{lemma}
There exist $\gamma_i>0, i = 1, \ldots, k$ such that $\sum_{i=1}^k p_i(y) = 1$, with 
\begin{equation} \label{eq:pifacet}
	p_i(y) \,\, = \,\, \frac{1}{\gamma_i} \bigl( \alpha_i + \langle v_i , y \rangle \bigr).
\end{equation}
\end{lemma}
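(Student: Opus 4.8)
The plan is to view the claimed identity $\sum_i p_i(y) = 1$ as a statement about the facet inequalities of $P$, and to extract the constants $\gamma_i$ from a single linear relation among the facet normals $v_i$ and the parameters $\alpha_i$. First I would recall that, since $P$ is a simple polytope containing the origin in its interior with inequality representation $\langle v_i, y\rangle + \alpha_i \geq 0$, the origin lies strictly inside $P$, so all $\alpha_i > 0$. The key observation is that the function $y \mapsto \sum_{i=1}^k \bigl(\alpha_i + \langle v_i, y\rangle\bigr)$ is affine in $y$: it equals $\bigl(\sum_i \alpha_i\bigr) + \bigl\langle \sum_i v_i,\, y\bigr\rangle$. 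For this to be proportional to the constant function $1$, we need the linear part to vanish, i.e.\ $\sum_{i=1}^k v_i = 0$. This is where I would invoke the exact sequence \eqref{eq:SESCl}: the matrix $V^\top : M \to \ZZ^k$ has the all-ones vector in the kernel of the dual map only after a suitable normalization, so in general $\sum_i v_i \neq 0$ and one cannot take $p_i$ literally as written with a \emph{constant} $\gamma_i$.

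So the honest approach is different. I would instead allow the $\gamma_i$ to absorb exactly what is needed: set $\gamma_i$ so that $p_i(y) = \tfrac{1}{\gamma_i}(\alpha_i + \langle v_i, y\rangle)$ and impose $\sum_i p_i \equiv 1$. Writing $\lambda_i = 1/\gamma_i$, the condition $\sum_i \lambda_i(\alpha_i + \langle v_i, y\rangle) = 1$ for all $y$ is equivalent to the two conditions $\sum_i \lambda_i v_i = 0$ in $\RR^n$ and $\sum_i \lambda_i \alpha_i = 1$. The first is a linear system of $n$ equations in $k$ unknowns with $k > n$, so its solution space has dimension $\geq k-n \geq 1$. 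The crucial point — and the main obstacle — is to show that this solution space contains a \emph{strictly positive} vector $(\lambda_1,\ldots,\lambda_k)$, for only then can we normalize via the second condition (note $\sum_i \lambda_i \alpha_i > 0$ automatically once all $\lambda_i, \alpha_i > 0$) and recover positive $\gamma_i = 1/\lambda_i$.

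To produce a positive element of $\ker[v_1 \cdots v_k]$, I would use that $P$ is a polytope (bounded): a standard fact from polyhedral geometry says that the facet normals $v_1,\ldots,v_k$ of a full-dimensional polytope positively span $\RR^n$, and more precisely — since the recession cone of $P$ is $\{0\}$ — the origin lies in the interior of the convex hull of the $v_i$, equivalently $0 = \sum_i \lambda_i v_i$ for some $\lambda_i > 0$. (One can see this by Gordan's lemma / Farkas: if no such positive combination existed, there would be a nonzero $y$ with $\langle v_i, y\rangle \geq 0$ for all $i$, giving an unbounded ray $\{ty : t \geq 0\}$ inside the recession cone of $P$, contradicting boundedness.) Having fixed such positive $\lambda_i$, I rescale by the positive constant $\sum_i \lambda_i \alpha_i$ to arrange $\sum_i \lambda_i \alpha_i = 1$, and then set $\gamma_i := 1/\lambda_i > 0$. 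Then by construction $\sum_{i=1}^k p_i(y) = \sum_i \lambda_i \alpha_i + \bigl\langle \sum_i \lambda_i v_i,\, y\bigr\rangle = 1 + 0 = 1$, and each $p_i$ is positive on $P^\circ$ since $\alpha_i + \langle v_i, y\rangle > 0$ there. The main work, and the only nontrivial step, is the polyhedral lemma that boundedness of $P$ forces a strictly positive linear dependence among its facet normals; everything else is bookkeeping.
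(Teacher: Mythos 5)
Your proof is correct, and it takes a genuinely different route from the paper's. The paper constructs the strictly positive kernel vector explicitly using the \emph{completeness of the normal fan}: for each $i$, the vector $-v_i$ lies in some cone of $\Sigma_P$, hence is a nonnegative combination of the ray generators spanning that cone, which yields a nonnegative $w_i \in \ker V$ with $i$-th entry $1$; summing the $w_i$ over $i$ and normalizing by $\alpha$ gives the positive vector $1/\gamma$. You instead argue nonconstructively from \emph{boundedness of $P$} via a theorem of alternatives: if no strictly positive $\lambda$ with $V\lambda = 0$ existed, there would be a nonzero $y$ with $V^\top y \geq 0$, i.e.\ a nonzero direction in the recession cone of $P$, contradicting boundedness. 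The alternative you are really invoking here is Stiemke's lemma (strict positivity on one side, $V^\top y \geq 0$, $V^\top y \neq 0$ on the other), not Gordan's, which only furnishes a \emph{nonnegative} nonzero $\lambda$; your prose correctly describes the Stiemke dichotomy, so the argument is sound, but the attribution should be tightened. Each approach has its merits: the paper's construction is explicit, stays entirely within the toric/fan language already set up, and identifies concrete kernel vectors $w_i$ that are reused to verify $w_i \cdot \alpha > 0$; your approach is shorter and cleanly isolates the polyhedral fact (bounded $\Leftrightarrow$ trivial recession cone $\Leftrightarrow$ strictly positive dependence among the facet normals) that makes the lemma work, at the cost of appealing to a standard separation-type theorem as a black box. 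Your preliminary remark that one cannot simply sum $\alpha_i + \langle v_i, y\rangle$ without weights is a reasonable motivating observation, and you correctly pivot to the weighted formulation; the remaining bookkeeping ($\alpha_i > 0$ from $0 \in P^\circ$, normalization by $\sum \lambda_i \alpha_i > 0$, and $\gamma_i = 1/\lambda_i$) matches the paper.
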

\begin{proof}
It suffices to find a positive vector $1/\gamma = (1/\gamma_1, \ldots, 1/\gamma_k)$ in the kernel of $V = [v_1~ \cdots ~v_k]$, scaled so that $(1/\gamma) \cdot \alpha = 1$.
Such a vector exists because the columns $v_i$ are the rays of a complete fan $\Sigma_P$. Indeed, $-v_i$ is a positive combination of the rays spanning the smallest cone of $\Sigma_P$ containing it. This gives a nonnegative vector $w_i \in \ker V$ with $i$-th entry $1$ for each $i$. Pick an interior point $y \in P^\circ$. Since $\alpha_j > - \langle v_j, y \rangle$ for all $j$, we have $w_i \cdot \alpha > 0$. 
We conclude that the vector
$1/\gamma = \frac{1}{\sum_{i = 1}^k w_i \cdot \alpha } \sum_{i=1}^k w_i$ is positive.
\end{proof}

The states of the linear model are the $k$ facets of $P.$ The probability of the $i$-th facet is given by \eqref{eq:pifacet}.
The probabilities are nonnegative
precisely on the polytope $P.$ The linear model is the image of the
resulting map~$P \rightarrow \Delta_{k-1}.$ 
While the states in the linear model are the facets of our simple polytope $P,$ we now
introduce a variant where the vertices $q_I$ serve as the states.
Their number is $m+1 = |\Sigma(n)|.$

The following polynomial in $n$ variables is known as the {\em adjoint} of
the polytope~$P$:
$$ A(y) \quad = \quad  \sum_{I \in \Sigma(n)}  \! |{\rm det}(\widetilde{V}_I)| \cdot 
\prod_{i \not\in I} \biggl( 1+  \frac{1}{\alpha_i} \langle v_i,y \rangle \biggr) . $$
Here the matrix $\widetilde{V}$ is obtained from $V$ by scaling the $i$th column with $\alpha_i^{-1}.$
This formula looks like the canonical form $\Omega_X$ on the
toric variety $X.$ Namely, we replace
$x_i$ in \eqref{eq:omegax}
by the $i$-th facet equation and we clear denominators.
The following differential form on $P$ is the pushforward of $\Omega_X$ under
the moment map $X_{>0} \rightarrow P$:
$$ \Omega_P \,\, = \,\, \frac{A}{\prod_{i=1}^k \left ( 1 + \frac{1}{\alpha_i} \langle v_i, y \rangle \right) } \,{\rm d} y_1 \cdots {\rm d} y_n .$$
Arkani-Hamed, Bai, and Lam \cite[Theorem 7.2]{arkani2017positive}
proved that $\Omega_P$ is the canonical form of the pair $(\PP^n,P).$
The adjoint $A$ endows $P$ with the structure of a positive geometry.

Each summand of $A$ has degree $k-n$. The adjoint has
degree $k-n-1,$ since the highest degree terms cancel.
Consider the summand indexed by the cone $I \in \Sigma(n)$:
\begin{equation}
	\label{eq:wachspressmap}
	p_I(y) \quad = \quad \frac{|{\rm det}(\widetilde{V}_I)|}{A(y)} \prod_{i \not\in I} \biggl( 1+ \frac{1}{\alpha_i} \langle v_i,y \rangle \biggr) . 
\end{equation}
These products of $k-n$ affine-linear forms satisfy the following remarkable identities:
$$ 
\sum_{I \in \Sigma(n)} \! p_I(y) \,=\, 1 \qquad {\rm and} \qquad
\sum_{I \in \Sigma(n)} \! p_I(y) \,q_I \,=\, y.
$$
These identities tell us that the $p_I(y)$ serve as
barycentric coordinates on $P.$ They express 
each point $y $ in the polytope $ P$ canonically as a convex
combination of the $m+1$ vertices $q_I.$
The resulting statistical model with state space $\Sigma(n)$ is the map
$$\,P \longrightarrow \Delta_m, \quad y \mapsto \bigl(\, p_I(y) \,\bigr)_{I \in \Sigma(n)}. $$
We call this the {\em Wachspress model} on the polytope $P$. We believe that this model, unlike the
linear model on $P$, 
has not yet been considered in the statistics literature.

\begin{example}[Pentagon] \label{ex:pentagonwachspress}
	The pentagon in \Cref{ex:pentagon1} matches \cite[Figure~8]{arkani2017positive}.
	Here, $n=2,$ $ k=m+1=5,$ and $P$ is the set where
		the following are nonnegative:
	\begin{equation}
		\label{eq:pentagonlinear}  \ell_1 = 1+y_1 , \,\,\,
		\ell_2 = 1+y_1-y_2 , \,\,\,
		\ell_3 = 1 - y_1 - y_2 ,\,\,\,
		\ell_4 = 1 - y_1 + y_2 ,\,\, \,
		\ell_5 = 1+y_2.
	\end{equation}     
	Here, $P$ was shifted so that the interior point in Figure \ref{fig:pentagon} is the origin.
	The vertices~are
	$$                      y_{12} \,=\, (-1, 0) \, , \quad
	y_{23} \,=\, (0, 1) \, , \quad
	y_{34} \,=\, (1, 0) \,, \quad
	y_{45} \,=\, (0, -1) \, , \quad
	y_{51} \,= \,(-1, -1). $$
	The linear model is the map $P \rightarrow \Delta_4,\, y \mapsto \frac{1}{5} (\ell_1(y),\ldots,\ell_5(y)).$ 
	Its states are the  edges of the pentagon $P.$
	The distributions in this model are the points $p \in \Delta_4$ that satisfy
	$$ 
	2p_1 - 2p_2 +p_3 - p_4  \,\,=\,\,
	- p_2 - 2p_4 + p_3 + 2p_5  \,\,=\,\, 0.
	$$
	We next describe the Wachspress model.
	The adjoint  of $P$ is the quadratic polynomial
	$$ A \,= \, 7 + 2 (y_1+y_2) - (y_1-y_2)^2 \,\,= \,\,
	\ell_1 \ell_2 \ell_3 + \ell_2 \ell_3 \ell_4 + 
	\ell_3 \ell_4 \ell_5 + 2 \ell_4 \ell_5 \ell_1 + 2 \ell_5 \ell_1 \ell_2 . $$
	The states of the model are the five vertices of the pentagon $P.$
	Their probabilities are
	\begin{equation}
		\label{eq:pentagonwachspress} (p_{45}, p_{51}, p_{12}, p_{23},p_{34}) \, = \,
		\frac{1}{A} \bigl(\,  \ell_1 \ell_2 \ell_3 ,\, \ell_2 \ell_3 \ell_4,\,
		\ell_3 \ell_4 \ell_5 , \, 2 \ell_4 \ell_5 \ell_1 , \,2 \ell_5 \ell_1 \ell_2 \,\bigr). 
	\end{equation}
	Each $p_{ij}$ is a rational function with cubic numerator and quadratic denominator.
	This defines the Wachspress model $P \rightarrow \Delta_4.$
	Its distributions are points $p \in \Delta_4$ that satisfy
	$$
	2 p_{12} p_{45}+2 p_{23} p_{45}-p_{23} p_{51}-2 p_{34} p_{51}
	\,=\, 2 p_{12} p_{34}-2 p_{23} p_{45}-p_{23} p_{51}+p_{34} p_{51} 
	\, =\, 0.
	$$          
	Geometrically, this is a del Pezzo surface of degree four
	in $\PP^4,$ obtained by blowing up $\PP^2$ at five points.
	These points are the intersections of edge lines outside $P$.
\end{example}

We now turn to {\em toric models}. In algebraic statistics \cite{Sullivant}, these are
models parameterized by monomials. We recast them in the setting of \Cref{sec2}.
Fix a degree $\gamma \in {\rm Cl}(X)$. Let $Z$ be a homogeneous polynomial
of degree $\gamma$ with positive~coefficients, 
$$ Z \,\, = \,\, c_0 x^{a_0} \,+ \, c_1 x^{a_1} \,+ \,\cdots \,+\, c_m x^{a_m} \,\, \, \in \, S. $$
We divide each of the summands by $Z$ to get rational functions $p_i$ of degree zero on~$X$:
$$ p_i \,\, = \,\, \frac{c_i x^{a_i}}{Z}\, , \qquad \hbox{for} \,\,\, i = 0,1,\ldots,m . $$
These functions are positive on $X_{>0}$ and their sum is equal to $1.$
The toric model of $Z$ is the resulting map  $X_{>0} \rightarrow \Delta_m $
into the probability simplex. In this manner, we identify toric models on $X$
with homogeneous positive polynomials $Z$ in the Cox ring.

The model is especially nice when  the degree $\gamma$ is ample 
and $Z$ uses all monomials of degree $\gamma.$ In that case, 
the Newton polytope $P = \mathcal{N}(Z)$ is simple and we have $\mathcal{F} = \Sigma.$
This simplifies the combinatorics and hence is a favorable situation for tropical sampling.

\begin{example} \label{ex:oneforeach}
	Let $\gamma$ be the very ample degree for the
	pentagon in \Cref{ex:pentagon1}.
	A general polynomial of degree $\gamma$ has six terms, one for each lattice point in Figure~\ref{fig:pentagon}:
		$$
		Z\,\, = \,\,
		c_0  x_2 x_3^3 x_4 \,+\,
		c_1  x_1 x_2^2 x_3^2 \,+ \,
		c_2 x_3^2 x_4^2 x_5 \,+ \,
		c_3 x_1 x_2 x_3 x_4 x_5 \,+ \,
		c_4 x_1^2 x_2^2 x_5 \,+ \,
		c_5 x_1 x_4^2 x_5^2.
		$$
	The toric model is  the map $X_{>0} \rightarrow \Delta_5 $ given by
	the six terms. Geometrically, up to scaling the coordinates by the $c_i > 0,$
	this is the  embedding of $X$ into $\PP^5$ 
	given by $\gamma.$
\end{example}

\begin{remark} \label{rmk:tensors}
	Let $P$ be a product of standard simplices, so the toric variety $X$ is a
	product of projective spaces. For $\gamma =(1,1,\ldots,1),$ 
	the line bundle $\mathcal{O}(\gamma)$
	is very ample. This
	 line bundle defines the Segre embedding of $X$. 
	Here, the toric model coincides with the Wachspress model.
	Each distribution in this model is a tensor of rank one
	\cite[Section 16.3]{Sullivant}. 
	Its mixture models encode tensors of higher~rank.
\end{remark}

The setting of \Cref{sec2} is convenient for working with {\em mixture models} \cite[Section 14.1]{Sullivant}.
Given any model $p:X_{>0} \rightarrow \Delta_m,$ its $r$-th mixture model
lives on the toric variety $X^r \times \PP^{r-1}.$ The parameter space
$(X^r \times \PP^{r-1})_{> 0} = X_{>0}^r \times \PP^{r-1}_{>0}$
is mapped into the probability simplex $\Delta_m$ by the secant map.
Geometrically, the mixture model is the $r$th secant variety  of ${\rm im}(p)$.
For more information see
\cite[Definition 14.1.5]{Sullivant} 

Mixture models of toric models  play an important role in applications.
Going beyond Remark \ref{rmk:tensors}, consider the model of
symmetric tensors of nonnegative rank $\leq r.$ In statistics, this is known as the
model of conditional independence for identically distributed random variables.
We refer to \cite{LSZ} for Bayesian integrals 
and to \cite[Section 5]{sturmfels2020likelihood} for likelihood inference.

The model in the Introduction is
the $r=2$ mixture of a toric model on $X = \PP^1.$
We conclude this section with a case study
of this coin model from the perspective of \Cref{sec3}.
The rational functions in \eqref{eq:runningex1} have distinct numerators $P_0,\ldots,P_m$
but the same denominator  $Q = (x_0+x_1) (s_0+s_1)^m (t_0+t_1)^m.$ 
The Minkowski sum of their Newton polytopes is a \mbox{$3$-dimensional} polytope in $\RR^6$.
In symbols, this is
\begin{equation}
	\label{eq:NewtonPolytope}
	\mathcal{N} (Q) \,+\,
	\mathcal{N}(P_0) \, + \,
	\mathcal{N}(P_1) \, + \, \cdots \,+\,
	\mathcal{N}(P_m) ,
\end{equation}
The normal fan $\mathcal{F},$ of this polytope, which
lives in a quotient space $\RR^6/\RR^3,$ is an essential ingredient for the algorithms
in Sections \ref{sec3} and \ref{sec4}. We now compute this.

\begin{theorem} \label{thm:nicepolyope}
	The Newton polytope \eqref{eq:NewtonPolytope}
	has $8 (m+1)$ vertices, $ 14 m + 12$ edges and $6(m+1)$ facets.
	Each of the eight vertices of the cube $\mathcal{N}(Q)$ is
	a summand of $m+1$ vertices.
	Among the $6(m+1)$ facets, four are pentagons, two are $2(m+1)$-gons and the remaining ones are quadrilaterals.
\end{theorem}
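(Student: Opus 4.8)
The plan is to dehomogenize, reducing the claim to a concrete Minkowski sum in $\RR^3$, and then to read off its combinatorics from the normal fan. Since $P_0,\dots,P_m$ and $Q$ all have the same Cox degree $(1,m,m)\in\mathrm{Cl}(X)=\ZZ^3$, their Newton polytopes lie in a common affine $3$-plane of $\RR^6$, and projection onto the exponents of $(x_1,s_1,t_1)$ restricts there to an affine isomorphism with $\RR^3$ that is compatible with Minkowski sums. Under it, $\mathcal N(Q)$ becomes the box $B=[0,1]\times[0,m]\times[0,m]$ while $\mathcal N(P_\ell)$ becomes the tetrahedron $T_\ell=\mathrm{conv}\{(0,m-\ell,0),\,(0,m-\ell,m),\,(1,0,m-\ell),\,(1,m,m-\ell)\}$, i.e.\ the convex hull of a ``$t_1$-edge'' at $x_1=0$ and an ``$s_1$-edge'' at $x_1=1$. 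So it suffices to analyze $\Pi:=B+T_0+\dots+T_m$. I would first record the two symmetries $(a,b,c)\mapsto(1-a,c,b)$ (fixing every $T_\ell$) and $(a,b,c)\mapsto(a,m-b,m-c)$ (sending $T_\ell\mapsto T_{m-\ell}$), both preserving $B$; these cut the casework roughly in half.

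For the vertex count and the claimed refinement of the cube's vertices, I would use that the normal fan of $\Pi$ is the common refinement of the normal fan of $B$ (the eight coordinate orthants) with those of the $T_\ell$. A short computation gives the four facet normals of $T_\ell$, namely the rays $(\ell-m,-1,0)$, $(-\ell,1,0)$, $(m-\ell,0,-1)$, $(\ell,0,1)$, and shows that inside each orthant $O$ the fans of the $T_\ell$ contribute, for $m$ of the $m+1$ values of $\ell$, one interior wall apiece; these $m$ walls are pairwise disjoint in $\mathrm{int}\,O$, so they cut $O$ into exactly $m+1$ chambers, the one exceptional value of $\ell$ contributing no interior wall. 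Hence each of the eight vertices of $\mathcal N(Q)$ occurs as the $B$-summand of exactly $m+1$ vertices of $\Pi$, which gives $8(m+1)$ vertices in total.

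The facets of $\Pi$ are the rays of the refined fan, and here care is needed, because a Minkowski sum can acquire facet normals belonging to no summand. I would sort the rays into three families. (i) The six orthant rays: for each $\rho\in\{\pm e_1,\pm e_2,\pm e_3\}$ one computes $\Pi_\rho=B_\rho+\sum_\ell(T_\ell)_\rho$ as a sum of faces, finding that $\pm e_1$ yield rectangles (the faces $(T_\ell)_\rho$ being mutually parallel segments) and $\pm e_2,\pm e_3$ yield pentagons (a triangular face of $T_0$ or $T_m$, plus a square and points). (ii) The facet normals of the $T_\ell$: four of them coincide with $\pm e_2,\pm e_3$, and the remaining $4m$ are distinct ``diagonal'' rays; for such $\rho$ some $(T_{\ell_0})_\rho$ is a triangle and all other summand faces are points or segments parallel to one of its edges, so $\Pi_\rho$ is a quadrilateral. (iii) The ``new'' rays, all lying in $\{w_1=0\}$: on the wall $\{w_1=0,\,w_2>0,\,w_3>0\}$ every $T_\ell$ with $\ell\ge1$ contributes the single wall $\{w_2=w_3\}$, giving the ray $(0,1,1)$, whose facet $\Pi_\rho$ is a Minkowski sum of segments in the $m+1$ pairwise non-parallel directions $(1,\ell,-\ell)$, hence a $2(m+1)$-gon; the mirror wall $\{w_1=0,\,w_2<0,\,w_3<0\}$ gives the second $2(m+1)$-gon; on $\{w_1=0,\,w_2>0,\,w_3<0\}$ the $T_\ell$-walls $\{\ell w_2+(m-\ell)w_3=0\}$ are pairwise distinct for $1\le\ell\le m-1$, yielding $m-1$ further rays each with a parallelogram facet, and likewise $m-1$ more on the mirror wall; every remaining orthant wall is refined only by rays already counted in (ii). Summing, $\Pi$ has $6+4m+2m=6(m+1)$ facets — four pentagons, two $2(m+1)$-gons, and $6m$ quadrilaterals. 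The edge count then follows from Euler's relation, $E=V+F-2=8(m+1)+6(m+1)-2=14m+12$, and the identity $4\cdot5+2\cdot2(m+1)+6m\cdot4=28m+24=2E$ between edges and edge–facet incidences serves as a consistency check.

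The main obstacle is completeness in step (iii): one must verify that the list of rays above is exhaustive, i.e.\ that no further facet normal arises where walls of two distinct $T_\ell$'s, or a $T_\ell$-wall and an orthant wall not treated above, meet. I would settle this by showing that the $6(m+1)$ normal cones just described already cover $\RR^3$, or equivalently by checking that $V-E+F=2$ together with the edge–facet incidence count forces precisely these face numbers, leaving no room for an additional facet. Everything else reduces to routine face-of-a-Minkowski-sum computations once this bookkeeping is set up.
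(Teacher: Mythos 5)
Your argument is correct in substance and takes a genuinely different, more explicitly geometric route than the paper. The paper works directly in the homogeneous exponent lattice: for a generic weight $w \in \RR^6$ it tracks which monomial of $Q$ and of each $P_\ell$ is selected, encodes the result as a binary string of length $m+4$, and then reads off the $14m+12$ edges and $6(m+1)$ facets by Hamming-distance and substring patterns in these strings. You instead dehomogenize to $\RR^3$, so that the problem becomes the concrete Minkowski sum $\Pi = B + T_0 + \cdots + T_m$ of a box and $m+1$ explicit tetrahedra, and you read the combinatorics off from the common refinement of the summands' normal fans; I checked your facet normals $(\ell-m,-1,0),\ (-\ell,1,0),\ (m-\ell,0,-1),\ (\ell,0,1)$ of $T_\ell$, the pairwise-disjointness of the $m$ interior walls $\{w_1 + \ell w_2 - \ell w_3 = 0\}$ in a positive orthant, and the shape computations $\Pi_\rho = B_\rho + \sum_\ell (T_\ell)_\rho$ for the various ray types, and they are all correct. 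The two approaches really do encode the same information (your orthant chambers are exactly the paper's binary strings, and the paper's $(2m+2)$-gon zonotope argument is a disguised form of your wall count), but yours yields the vertex/facet data in a modular way directly from standard facts about normal fans of Minkowski sums, while the paper's binary-string encoding produces short explicit descriptions of the edges and of each facet's vertex set (which is what ultimately makes the case distinction into $2(m+1)$-gons, pentagons, and quadrilaterals transparent). One small caveat: your closing sentence suggests that $V-E+F=2$ plus the edge–facet incidence identity "forces" the facet count, but Euler's relation alone cannot exclude extra facets once $V$ is fixed — it only gives the relation $E = V + F - 2$, and both $E$ and $F$ could a priori be larger. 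Your first proposed fix — verify directly that the $8(m+1)$ vertex cones cover $\RR^3$ (which is immediate, since the eight orthants cover and each is tiled by its $m+1$ chambers) and that their ray set is exactly your list — is the one that actually closes the gap; the Euler identity then serves only as the consistency check you also mention.
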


\begin{figure}[ht] 
	\centering
	\includegraphics[scale=0.51]{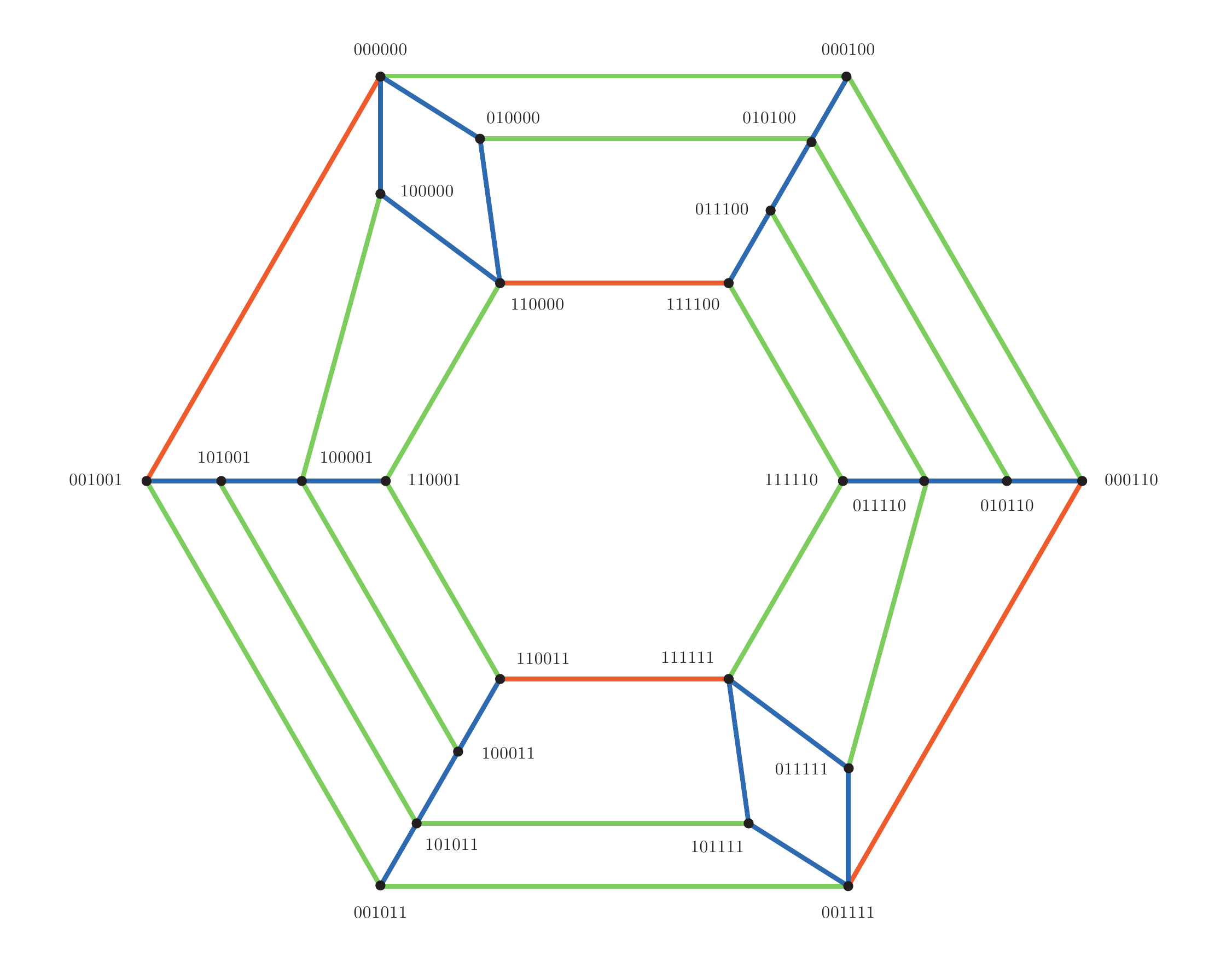} 
	\vspace{-0.2cm}	
	\caption{Schlegel diagram of the polytope \eqref{eq:NewtonPolytope} for $m=2.$
		The $24$ vertices are labeled by binary strings. There are four special edges (orange) and $36$ regular ones: $16$ of class $1$ (green) and $20$ of class $2$ (blue). Among the $18$ facets,
		we see two hexagons and~four~pentagons.
		\label{fig:eins}}
	\vspace{-0.2cm}		
\end{figure}

\begin{proof}[Sketch of Proof]
	Consider a generic vector $w$ in $\RR^6$ that assigns weights to the six Cox coordinates.
	The leading monomial $x_i s_j^m t_k^m$ of~$Q$ is determined
	by the signs of the quantities
	\begin{equation}
		\label{eq:threeineq}
		w(x_0) - w(x_1), \,\,w(s_0) - w(s_1), \,\,w(t_0) - w(t_1). 
	\end{equation}
	We record this leading monomial in the binary string $ijk.$
	Each of these eight choices
	allows for $m+1$ consistent choices of leading monomials
	from the tuple $(P_0,\ldots,P_m).$ Indeed, the leading monomial of $P_{\ell}$ coincides with that 
	of $\, \tilde P_\ell \,\, = \,\, x_0 s_0^\ell s_1^{m-\ell} t_0^m \, + \, x_1 t_0^\ell t_1^{m-\ell} s_0^m.$
	The line segments $\mathcal{N}(\tilde P_{\ell})$ lie in translates of a common $2$-dimensional
	subspace in $\RR^6,$ and their Minkowski sum is a $(2m+2)$-gon. Precisely half of its
	vertices are compatible with the inequalities \eqref{eq:threeineq}. These
	$m+1$ vertices become vertices of \eqref{eq:NewtonPolytope}, and they are
	all the vertices.

	We encode each vertex of~\eqref{eq:NewtonPolytope} by a binary string of length $m+4,$ starting with $ijk.$
	The other $m+1$ entries indicate the leading terms of the $P_{\ell}.$ Namely, we write $0$ if the 
	the following expression is positive, and we write $1$ if it is negative:
	$$ w(x_0) - w(x_1)\, +\, (m-\ell)(w(s_0) - w(s_1))\,+\, (m-\ell)(w(t_0) - w(t_1)) $$
	 With this notation, here is the list of all $8(m+1)$ vertices of our polytope:
	$$ \begin{matrix}
		000 \,1^{{\ell}} 0^{m-{\ell}+1}\, , & 
		010 \,1^{{\ell}} 0^{m-{\ell}+1}\,, & 
		100  \,0^{{\ell}+1} 1^{m-{\ell}}\,, & 
		110 \,0^{{\ell}+1} 1^{m-{\ell}}\,,  \\
		001\, 0^{{\ell}} 1^{m-{\ell}+1}\,, & 
		101 \,0^{{\ell}} 1^{m-{\ell}+1}\,, &
		011 \,1^{{\ell}+1} 0^{m-{\ell}}\,, & 
		111 \,1^{{\ell}+1} 0^{m-{\ell}}\,, \end{matrix} \quad \hbox{for} \,\,{\ell}=0,1,\ldots,m.
	$$
	Any pair of such strings that differs in precisely one entry is an edge of \eqref{eq:NewtonPolytope}.
	This accounts for all but four of the edges. These special edges are pairs of strings that differ in two~positions:
	$$ \begin{matrix}
		[000 \,0^{m+1} \,,\, 001 \,0^m 1], &
		[000 \,1^m 0 \,,\, 001 \,1^{m+1}],  \\
		[110 \,0^{m+1}\,,\, 111\, 1 0^m ], &
		[110 \, 0 1^m \,,\, 111 \, 1^{m+1}]. \end{matrix}
	$$
	The other $14m+8$ {\em regular edges} come in two classes.
	In the first class, the initial triple $ijk$ in the binary string is fixed. 
	For each initial triple $ijk$ there are $m$ such edges, for a total of $8m$ edges.
	The remaining $6m+8$ edges correspond to a sign change in 
	$w(x_0) - w(x_1),$ $w(s_0) - w(s_1)$ or $w(t_0) - w(t_1).$
	Here, the terminal $m+1$ letters in the binary string is fixed.
	If that string is $0^{m+1}$ or $1^{m+1}$ then there are four edges
	which form a square facet of our polytope, namely the square $\,00 0^{m+2}, \,01 0^{m+2}, \,11 0^{m+2} ,\, 10 0^{m+2}, \,00 0^{m+2}$
	and the square $\,00 1^{m+2}, \,01 1^{m+2}, \,11 1^{m+2} ,$ $10 1^{m+2},\,00 1^{m+2}.$
	For each of the other $2m$ terminal strings, there are only three edges which form a $3$-chain, for instance
	$ 001 0^m 1,$
	$101 0^m 1,$ 
	$100 0^m 1,$
	$110 0^m 1.$
	This accounts for all $\,4 + 8\cdot m + (4+4) + (2m) \cdot 3 \,= \, 14m+12\,$ edges of our polytope \eqref{eq:NewtonPolytope}.

	We now discuss the $6m+6$ facets. First, there are two centrally symmetric
	$2(m+1)$-gons. They are formed by all strings that start with $00$ or $11$ respectively.
	There are precisely two other facets adjacent to both $2(m+1)$-gons, namely the two squares facets
	mentioned above. Adjacent to these two squares and to the two big facets are the four pentagons, which are	
	\begin{footnotesize}
		$$ \hspace*{-3mm}\begin{matrix} & 000 0^{m+1}, 1 00 0^{m+1},  100 0^m 1, 101 0^m 1, 001 0^m 1  & &
			001 1^{m+1}, 011 1^{m+1} , 011 1^m  0, 010 1^m 0, 000 1^m 0,   \\ &
			111 1^{m+1} , 101 1^{m+1}, 101 0 1^m ,100 0 1^m, 110 0 1^m & &
			11 0 0 ^{m+1} , 010 0^{m+1} , 010 1 0^m, 011 1 0^m , 111 1 0^m.
		\end{matrix}$$ 
	\end{footnotesize} 
Each pentagon contains one of the four special edges. The remaining  facets are 
	quadrilaterals. They come in six strips of $m$ facets. 
	See Figure~\ref{fig:eins} for the case $m=2$.
\end{proof}

\section{Marginal Likelihood Integrals}\label{sec6}
We now come to applications of our results to Bayesian statistics.
We fix a statistical model
$X_{> 0} \rightarrow \Delta_m$ that is specified by $m+1$
rational functions $p_i$ on the toric variety $X.$
These rational functions  sum to $1.$
We write $p_i(x) = q_i(x)/r_i(x),$ where the numerator
$q_i$ and the denominator~$r_i$ are homogeneous polynomials
with positive coefficients that have the same degree in ${\rm Cl}(X).$
We further assume that we are given a rational function $f/g$ that defines
a probability distribution
$\mu_{f,g}$ on $X_{>0}$ as in \eqref{eq:classicaldensity}.
This serves as the prior distribution for Bayesian inference.

\begin{remark}
For the prior distribution on $X_{>0}$ we can choose any 
two positive polynomials $f$ and $g$ of the same
degree in ${\rm Cl}(X)$ such that the relevant
integrals converge. This is ensured by the 
hypothesis on Newton polytopes in Theorem  \ref{thm:convergent}.
\end{remark}

The data comes in the form of $U$ samples from the state space $\{0,1,\ldots,m\}.$
We write $u_i$ for the number of samples that are in state $i.$
We assume that the integers $u_i$ are positive. 
They satisfy $u_0+u_1 + \cdots + u_m = U.$ 
Given $u = (u_0,u_1,\ldots,u_m),$ the {\em likelihood function} $L_u$~is
\begin{equation}\label{eq:LF}
	L_u(x) \,\, \coloneqq \,\, \frac{\prod_{i=0}^m q_i(x)^{u_i} } {\prod_{i=0}^m r_i(x)^{u_i} } .
\end{equation} 
This is the probability of observing the data vector $u,$
assuming that the model, the parameters~$x,$ and the sample size $U$ are fixed.
The Multinomial Theorem implies 
$$ \qquad \sum_{|u| = U}  \frac{U!}{u_0! u_1 ! \cdots u_m !} \cdot L_u(x)  \,\,\,=\,\, \,1
\qquad \hbox{for all $x \in X_{>0}.$} $$

We are interested in the {\em posterior density} on $X_{>0}.$ Up to a constant factor,
this~is
\begin{equation}\label{eq:postprior}
	d_u \,\, := \,\, L_u \cdot d_{f,g}.
\end{equation}
The {\em marginal likelihood integral} $\,\mathcal{I}_u\,$ is the  integral  of \eqref{eq:LF}
against $\mu_{f,g},$ i.e.,
\begin{equation}
	\label{eq:MLI}
	\mathcal{I}_u \,\,\, := \,\,\,
	\int_{X_{>0}} \!\! L_u(x) \, \mu_{f,g} \, \, = \, \,  \frac{1}{\mathcal{I}} \int_{X_>0} \!\! L_u(x) \,  \frac{f(x)}{g(x)}\, \Omega_X  .
\end{equation}
We shall evaluate $\mathcal{I}_u$ using the methods in
\Cref{sec4}, but with $f/g$ replaced by
$L_u\cdot f/g.$ 
Also of interest is sampling  from the posterior density $d_u,$
by way of \Cref{alg:mufg}.

The Newton polytope of the integrand
$\,L_u\cdot f/g\,$ admits the decomposition
\begin{equation}
	\label{eq:NNNN} \mathcal{N}(f) \,+\, \mathcal{N}(g) \, +\, \sum_{i=0}^m u_i \,\mathcal{N}(q_i) \,+\,
	\sum_{i=0}^m u_i \,\mathcal{N}(r_i). 
\end{equation}
The normal fan of (\ref{eq:NNNN}) is independent of the data $u$ since 
$u_0,\ldots,u_m$ are positive.
As before, we let $\mathcal{F}$ be a simplicial refinement of the normal fan
of the polytope in \eqref{eq:NNNN}.
We  note the following fact, which is important for the applicability of our method.

\begin{observation}
	\label{obs:goodnews}
	The simplicial fan $\mathcal{F}$  is independent of the data $u.$
	It is computed from the statistical model.
	This is done in an offline step that is carried out only once per model.
\end{observation}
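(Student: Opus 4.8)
The plan is to reduce the statement to two elementary facts about polytopes. First, recall that the normal fan of a Minkowski sum $Q_1 + \cdots + Q_N$ of polytopes is the common refinement of the normal fans $\mathcal{N}(Q_1), \ldots, \mathcal{N}(Q_N)$; this is standard (see \cite{cox2011toric} or \cite[Chapter 6]{maclagan2009introduction}). Second, for any scalar $c > 0$ and any polytope $Q$, the dilate $cQ$ has the same normal fan as $Q$, since rescaling the polytope rescales each face but does not change which linear functionals are maximized on it. These two observations together are exactly what makes the remark preceding the statement rigorous, and the plan is to spell them out.

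Applying this to the polytope in \eqref{eq:NNNN}: since every $u_i$ is a positive integer, $u_i\,\mathcal{N}(q_i)$ has the same normal fan as $\mathcal{N}(q_i)$, and likewise $u_i\,\mathcal{N}(r_i)$ has the same normal fan as $\mathcal{N}(r_i)$. Hence the normal fan of \eqref{eq:NNNN} equals the common refinement of the fixed collection of fans $\mathcal{N}(f)$, $\mathcal{N}(g)$, $\mathcal{N}(q_0), \ldots, \mathcal{N}(q_m)$, $\mathcal{N}(r_0), \ldots, \mathcal{N}(r_m)$. This collection is determined by the polynomials $f$, $g$, $q_i$, $r_i$ that define the prior and the model; the data vector $u$ enters \eqref{eq:NNNN} only through the positive multipliers $u_i$, which were just shown to be irrelevant for the normal fan. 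Therefore this normal fan is one and the same for every admissible $u$.

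Finally, $\mathcal{F}$ was defined to be a simplicial refinement of this normal fan. Such a refinement is not canonical, but it can be produced by a deterministic subdivision rule (for instance a pulling triangulation with a fixed tie-breaking order) that takes the fan as its only input; running that rule yields a fan $\mathcal{F}$ depending only on the model, which can be computed once and stored, together with the per-cone data $\delta_\sigma$ and $\mathcal{I}^{\rm tr}_\sigma$ of \Cref{thm:sectorformula} — these depend on $f$ and $g$ but still not on $u$. Feeding this precomputed input into \Cref{alg:eins} and \Cref{alg:mufg} for each new data vector is then the online step. I do not anticipate a real obstacle here; the one point needing care is the strict positivity of the $u_i$, which guarantees that no Minkowski summand degenerates and that the normal fan cannot become strictly coarser for special data — a hypothesis already imposed at the start of the section.
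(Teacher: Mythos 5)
Your core argument is correct and is essentially the paper's own (terse, one-sentence) justification: the normal fan of a Minkowski sum is the common refinement of the summands' normal fans, positive dilation of a summand does not alter its normal fan, and therefore the normal fan of \eqref{eq:NNNN} depends only on the model polynomials $f, g, q_i, r_i$ and not on the positive integers $u_i$. Your point that the simplicial refinement must be chosen by a deterministic rule to be genuinely $u$-independent is a sensible clarification the paper leaves implicit, and your identification of strict positivity of the $u_i$ as the hypothesis keeping the fan from coarsening matches the paper's remark immediately after the observation.

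However, your final paragraph contains a factual error that the paper explicitly contradicts. You claim that the per-cone data $\delta_\sigma$ and $\mathcal{I}^{\rm tr}_\sigma$ "depend on $f$ and $g$ but still not on $u$." In the Bayesian setting of Section~\ref{sec6}, the integrand is $L_u \cdot f/g$, not $f/g$, and the paper gives
$\delta_\sigma = \nu_g - \nu_f + \sum_{i=0}^m u_i(\nu_{r_i} - \nu_{q_i})$,
which visibly depends on $u$; consequently so does $\mathcal{I}^{\rm tr}_\sigma$ through the denominator of \eqref{eq:detVU}. What is precomputed offline is not $\delta_\sigma$ or $\mathcal{I}^{\rm tr}_\sigma$ themselves, but the $u$-independent inner products $w_\ell \cdot (\nu_{r_i} - \nu_{q_i})$, from which the sector integrals can then be evaluated cheaply online for each new data vector $u$. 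This does not affect the validity of the observation you set out to prove — that is strictly about $\mathcal{F}$ — but as written your proposal misrepresents the offline/online split that the observation is meant to justify, so the last sentence needs to be corrected.
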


We point out that we may allow some of the $u_i$ to be zero. In this case, the true fan is a coarsening of ${\cal F}$. One may still use ${\cal F}$ in our method, at the cost of having more sectors. 

The computation of $\mathcal{F}$ is expensive when the dimension 
$n$ gets larger. Observation~\ref{obs:goodnews} means that the running time
of the algorithms in Sections \ref{sec3} and \ref{sec4} is fairly independent of $u.$
For instance, consider the computation of the sector integrals $\mathcal{I}^{\rm tr}_\sigma$
using the formula in \eqref{eq:detVU}. The data  $u$ do not appear in the numerator, but
they do enter in the denominator. Namely, the monomial $x^{-\delta_\sigma}$ that 
represents the tropicalized  integrand $\,L_u^{\rm tr} f^{\rm tr} / g^{\rm tr}\,$ on 
${\rm Exp}(\sigma)$ satisfies
$$ \delta_\sigma \,\, = \,\,\nu_g - \nu_f \,+\, \sum_{i=0}^m u_i (\nu_{r_i} - \nu_{q_i}). $$
In an offline step, done once per model,
we precompute the $ k \times (m+1)$ matrix of 
inner products $w_\ell \cdot (\nu_{r_i} - \nu_{q_i}).$
In the online step, with data $u,$ we   evaluate~\eqref{eq:detVU} rapidly.

One point that does depend on the data $u$ is the accuracy of the
approximation in~\eqref{eq:montecarlo}. The bounds $M_1$ and $M_2$ for the
function $h $ scale exponentially in $U,$ and hence so does the right hand side of
\eqref{eq:generalbound}.  The quality of the estimate $\mathcal{I}_N$  can decrease a lot for larger~$U$, so that many more samples are needed to obtain an accurate approximation.
We observed this phenomenon in our computations.
This  issue requires further study.

We now present computational experiments with the models
we saw in Sections~\ref{sec1} and~\ref{sec5}. This material
is made available at {\tt MathRepo} \cite{mathrepo}.
Our readers can try it out.
Our implementation is in {\tt Julia}. It uses {\tt Polymake}~\cite{gawrilow2000polymake}
for polyhedral~computations.

\begin{example}[Coin model]
	Consider the coin model from the Introduction.
	We begin with $m=2.$ Fix $U=5$ and  $u = (u_0,u_1,u_2) = (2,1,2).$
	The marginal likelihood integral \eqref{eq:MLI} is a rational number.
	Using symbolic computation as in  \cite{LSZ},  we find 
	$$ \mathcal{I}_u \,\,=\,\, \frac{2267}{1559250}\,\, \approx \,\, 0.001454. $$
	We reproduce this number using tropical sampling.
	The Newton polytope \eqref{eq:NewtonPolytope} of the integrand is shown in Figure~\ref{fig:eins}.
	Its normal fan has $24$ maximal cones, one for each vertex. But,
	this fan is not simplicial since eight of the vertices  are $4$-valent.
	We turn \eqref{eq:NewtonPolytope}
	into a simple polytope by a small displacement of the facets.
	The resulting normal fan is simplicial, and it has $32 = 24+8$ maximal cones.
	That simplicial fan $\mathcal{F}$ is used for the sector decomposition.
	The right hand side in \eqref{eq:secdecomp} has $32$ summands, one
	for each cone  $ \sigma \in \mathcal{F}(3).$
	The values of the $32$ tropical sector integrals $\mathcal{I}^{\rm tr}_\sigma$ are the
	rational numbers $\,\frac{1}{280},\frac{1}{280},\frac{1}{280},\frac{1}{280}, \frac{1}{120},\ldots, \frac{1}{8},\frac{1}{7},\frac{1}{7}.$ Their sum equals
	$\mathcal{I}^{\rm tr} = \frac{40}{21} = 1.9047.$
	This gives the discrete probability distribution used in
	Step 1 of \Cref{alg:eins}.
	Numerical evaluation of \eqref{eq:montecarlo} with sample size $N= 50000$ yields
	$$ \mathcal{I}_N \,\, = \,\, 0.001486. $$
	We validated our method with a range of experiments
	for larger values of $U,m,N.$
\end{example}

\begin{example}[Pentagon models] 
	\label{ex:pentagonmodel} We  revisit the linear model and the Wachspress model from 
	\Cref{ex:pentagonwachspress}. Their common parameter space is the pentagon $P$
	with uniform prior.
	This is lifted to the toric surface $X_{>0}$ with density $d_{f,g}$ given by
	the homogeneous polynomials $f$ and $g$   in Example  \ref{ex:pentagonrevisit}.
	The coordinates of the two models are obtained from the polynomials in $y$ seen in
	\Cref{ex:pentagonwachspress}. We first substitute
	$y_1 = \frac{t_1}{q} \frac{\partial q}{\partial t_1}$
	and $y_2 = \frac{t_2}{q} \frac{\partial q}{\partial t_2},$ and then we set
	$t_1 = x_1 x_2 x_3^{-1} x_4^{-1}$ and
	$t_2 = x_2^{-1} x_3^{-1} x_4 x_5.$
	In each case, this yields a   rational function $L_u f/g$ that is homogeneous of degree zero in $x,$
	and satisfies the hypothesis in \Cref{thm:convergent}.

	The likelihood functions for the linear model and the Wachspress model look similar,
	but there is a crucial distinction. The latter also involves the adjoint $A.$
	This means that statistical inference is different for the two models. For instance,
	the maximum likelihood (ML) degree of the linear model on $P$ equals five, while the
	ML degree of the Wachspress model on $P$ equals eight.
	Recall, e.g.~from  \cite{HKS, sturmfels2020likelihood, Sullivant},
	that the {\em ML degree} of an algebraic statistical model is the
	number of complex critical points of the likelihood function of that model for general data.

	Consider the linear model with 
	$(u_1,u_2,u_3,u_4,u_5) = (20,16,10, 15, 23) .$ With \eqref{eq:pentagonlinear},
	\begin{equation}
		\label{eq:Iu1}
		\mathcal{I}_u \,\,\, = \,\,
		\frac{1}{5^{84}} \cdot \frac{2}{5} \cdot  \int_P \ell_1^{20} \ell_2^{16} \ell_3^{10}  \ell_4^{15} \ell_5^{23} \,
		{\rm d}y_1 {\rm d}y_2 .
	\end{equation}
	For the approximation by tropical sampling, we note that
	the Newton polygon of $L_u f/g$ has seven vertices, so its
	normal fan $\mathcal{F}$ has  $|\mathcal{F}(2)| = 7.$ 
	We find that ${\cal I}_u \approx 9.652 \cdot 10^{-60}.$

	We now compare this to the Wachspress model,
	where the probabilities are products of the linear forms, as shown in
	\eqref{eq:pentagonwachspress}.
	We pick the
	data $(u_{123},u_{234},u_{345},u_{451},u_{512}) = (2,3,5,7,11)$ 
	in order to match the exponents of the linear factors in the respective likelihood functions.	
	The marginal likelihood integral for the Wachspress model equals
	\begin{equation}
		\label{eq:Iu2}
		\mathcal{I}_u \,\,\, = \,\,  2^{13} \cdot \frac{2}{5} \cdot
		\int_P \ell_1^{20} \ell_2^{16} \ell_3^{10}  \ell_4^{15} \ell_5^{23} A^{-84} \,
		{\rm d}y_1 {\rm d}y_2 .
	\end{equation}
	where $A = 7 + 2 (y_1+y_2) - (y_1-y_2)^2$ is the adjoint.
	Again, the Newton polygon of $L_u f/g$ has seven vertices, so its
	normal fan $\mathcal{F}$ has  $|\mathcal{F}(2)| = 7.$  We find that ${\cal I}_u \approx 1.218 \cdot 10^{-66}.$
\end{example}

We next illustrate how our techniques can be applied to 
Bayesian model selection.

\begin{example}[Bayes factors]	As before, let $\mu_{f,g}$ denote the prior arising from the toric~Hessian. 
	We consider the data  $u = (u_0,u_1,\ldots,u_5) = (1,2,4,8,16,32).$
	We wish to decide between two models with $n=2$ and $m=5.$
	The two competitors are toric models with $p_i$ and $Z$ as in
	\Cref{ex:oneforeach}, with different coefficient vectors
	\mbox{$c = (c_0,c_1,\ldots,c_5).$}
	Model $\mathcal{M}_1$ is given by $c^{(1)} = (2,3,5,7,11,13)$ while
	model $\mathcal{M}_2$ is given by $c^{(2)} = (32,16,8,4,2,1).$
	We denote the respective likelihood functions by $L_u^{(1)}$ and~$L_u^{(2)}$ and the marginal likelihood integrals by 
	$$  \mathcal{I}_u^{(i)} \,\,\, = \,\,
	\int_{X_{>0}} \!\! L_u^{(i)}(x) \, \mu_{f,g} , \qquad i=1,2.$$
	In order to decide which model fits the data better, we compute the ratio 
	$K = \mathcal{I}_u^{(1)}/\mathcal{I}_u^{(2)}$ of the two marginal likelihood integrals. 
	This ratio is the {\em Bayes factor}.
	Using numerical cubature with tolerance {\tt 1e-5}, we find that $ \mathcal{I}_u^{(1)}\approx 5.675\cdot 10^{-38}$ and $ \mathcal{I}_u^{(2)}\approx 2.694 \cdot 10^{-39}.$ Therefore, $K\approx 21.06,$
	which reveals  that the model $\mathcal{M}_1$ is a better fit for $u$ than $\mathcal{M}_2.$
\end{example}

Another important Bayesian application is
sampling from the posterior distribution. In principle, this can be done with
\Cref{alg:mufg}, applied to the density $d_u$ in \eqref{eq:postprior}.
However, this fails to work as an off-the-shelf method. At present,
the method is  of theoretical interest only.
The challenge arises from large integer exponents, like $84$ in Equation \eqref{eq:Iu2}.
These exponents lead to a very low acceptance rate in
\Cref{prop:acceptancerate}. We
also observed this in practice: in a typical run of \Cref{alg:mufg}
for \eqref{eq:Iu1} with $N=100000,$ all samples are~rejected.

We conclude that our tropical sampling method
rests on solid and elegant mathematical foundations, and it
holds considerable promise for Bayesian inference. Yet,
more  research is needed to make it widely applicable for
computational statistics. For larger
sample size $U,$ the likelihood function $L_u$ has a sharp peak 
around its maximum, so it will be important to precompute
the critical points of  $L_u.$ The algebraic complexity for this task is the
ML degree of the model. This suggests combining
tropical sampling with the topological theory of ML degrees.
Our experiments also showed that exact symbolic algorithms
(cf.~\cite{LSZ}) are still surprisingly competitive. For instance, the
exact value of the integral $\mathcal{I}_u$ in
\eqref{eq:Iu1} equals

\vspace{-3mm}
{\footnotesize
\begin{align*} \frac{ 139123 \cdot 1256291317 \cdot 2602507379 \cdot 47336895027767486610187 } { 2^4 \cdot 3^6 \cdot 5^{88} \cdot 7^2 \cdot \! 11 \! \cdot 13^2 \cdot 17^2 \cdot 19 \cdot 23^2 \cdot 29^2 \cdot 31^2 \cdot 37^2 \cdot 41^2 \cdot 43 \cdot 47 \cdot 53 \cdot 59 \cdot 61 \cdot 67 \cdot 71 \cdot 79 \cdot 83}. \end{align*}
}
This rational number is intriguing.
We conclude that it would be desirable to combine the
methods of Sections~\ref{sec3} and~\ref{sec4} with such
exact evaluations. This is left for a future project.

\vspace*{0mm}
\section*{Acknowledgments}
We thank Thomas Lam for an insightful discussion on positive geometries, and two anonymous referees for their careful reading and 
helpful comments.

\vspace{-3mm}

\end{document}